\newtheorem{theorem}{Theorem}
\newtheorem*{maintheorem}{Main Theorem}
\newtheorem{proposition}[theorem]{Proposition}
\newtheorem{corollary}[theorem]{Corollary}
\newtheorem{lemma}[theorem]{Lemma}
\newtheorem{definition}[theorem]{Definition}
 \theoremstyle{remark}
 \newtheorem{remark}[theorem]{\bf Remark}
\def\bdef{\begin{definition}}
\def\endef{\end{definition}}
\def\bthm{\begin{theorem}}
\def\ethm{\end{theorem}}
\def\blm{\begin{lemma}}
\def\elm{\end{lemma}}
\def\brm{\begin{remark}}
\def\erm{\end{remark}}
\def\bprop{\begin{proposition}}
\def\eprop{\end{proposition}}
\def\bcor{\begin{corollary}}
\def\ecor{\end{corollary}}
\def\be{\begin{eqnarray}}
\def\ee{\end{eqnarray}}
\def\beal{\begin{aligned}}
\def\enal{\end{aligned}}
\def\A{\mathbb A}
\def\C{\mathbb{C}}
\def\N{\mathbb{N}}
\def\R{\mathbb{R}}
\def\T{\mathbb{T}}
\def\Z{\mathbb{Z}}
\def\bbe {\mathbb{e}}
\def\cD{\mathcal{D}}
\def\E{\mathcal{E}}
\def\Ell {{\mathcal E}\!\ell\!\,\ell}
\def\cL{\mathcal{L}}
\def\cM{\mathcal M}
\def \V{\mathcal{V}}
\def \ie {{\it i.e.}, }
\def \ta {\tilde a}
\def \arcosh{{\rm arcosh}}
\def \arsinh{{\rm arsinh}}
\def\a{\alpha}
\def\d{\delta}
\def\e{\varepsilon}
\def \f {\varphi}
\def\g {\gamma}
\def\Gm{\Gamma}
\def\l{\lambda}
\def\Lb{\Lambda}
\def\Om{\Omega}
\def\om{\omega}
\def\th{\theta}
\def \z {\zeta}
\def \sn {{\rm sn}\,}
\def \cn {{\rm cn}\,}
\def \am {{\rm am}\,}
\def \Im {{\rm Im}\,}
\def\textb{\textcolor{blue}}
\title{On the local Birkhoff Conjecture for convex billiards}
\author{Vadim Kaloshin}
\address{Department of Mathematics, University of Maryland, College Park, MD, USA, \&
ETH Zurich, Institute for Theoretical Studies, Zurich
Switzerland}
\email{vadim.kaloshin@gmail.com}
\author{Alfonso Sorrentino}
\address{Dipartimento di Matematica, Universit\`a degli Studi di Roma ``Tor Vergata'', Rome, Italy.}
\email{sorrentino@mat.uniroma2.it}
\date{\today}
\begin{document}
\begin{abstract}
The classical Birkhoff conjecture claims that the boundary of a strictly
convex integrable billiard table is necessarily an ellipse (or a circle as a
special case). In this article we prove a complete local version of this conjecture:
a small integrable perturbation of an ellipse must be an ellipse. This extends and completes
the result in \cite{ADK}, where nearly circular domains 
were considered. {One of the crucial ideas in the proof is to extend  action-angle 
coordinates for elliptic billiards into complex domains (with respect to the angle), 
and to thoroughly analyze the nature of their complex singularities.} 
As an application, we are able to prove some spectral rigidity results for elliptic domains.
\end{abstract}

\maketitle 

\begin{center}
{\it \quad \quad \quad 
Dedicated to the memory of our thesis advisor John N. Mather: 
\newline a great  mathematician  and a remarkable person }
\end{center}


\section{Introduction}

A {\it mathematical billiard} is a system describing the inertial motion of a point mass inside a domain, with elastic reflections at the boundary (which is assumed to have infinite mass). This simple model has been first proposed by G.D. Birkhoff as a mathematical playground where ``{\it the formal side, usually so formidable in dynamics, almost completely disappears and only the interesting qualitative questions need to be considered}'', \cite[pp. 155-156]{Birkhoff}.\\

Since then billiards have captured much attention in many different contexts, becoming a very popular subject of investigation. Not only is their law of motion very physical and intuitive, but billiard-type dynamics is ubiquitous. Mathematically, they offer models in every subclass of dynamical systems (integrable, regular, chaotic, etc.); more importantly, techniques initially devised for billiards have often been applied and adapted to other systems, becoming standard tools and having ripple effects beyond the field. \\

Let us first recall some properties of the billiard map. We refer to \cite{Siburg, Tabach1, Tabach} for a more comprehensive introduction to the study of billiards.\\

Let $\Omega$ be a strictly convex domain in $\R^2$ with $C^r$ boundary $\partial \Omega$,
with $r\geq 3$. The phase space $M$ of the billiard map consists of unit vectors
$(x,v)$ whose foot points $x$ are on $\partial \Omega$ and which have inward directions.
The billiard ball map $f:M \longrightarrow M$ takes $(x,v)$ to $(x',v')$, where $x'$
represents the point where the trajectory starting at $x$ with velocity $v$ hits the boundary
$\partial \Omega$ again, and $v'$ is the {\it reflected velocity}, according to
the standard reflection law: angle of incidence is equal to the angle of reflection (figure \ref{billiard}).

\begin{remark}
Observe that if $\Omega$ is not convex, then the billiard map is not continuous;
in this article we will be interested only in strictly convex domains (see Remark \ref{Matherglancing}).
Moreover, as pointed out by Halpern \cite{Halpern}, if the boundary is not at
least $C^3$, then the flow might not be complete.
\end{remark}

Let us introduce coordinates on $M$.
We suppose that $\partial \Omega$ is parametrized  by  arc-length $s$ and
let $\g:  [0, |\partial \Omega|] \longrightarrow \R^2$ denote such a parametrization,
where $|\partial \Omega|$ denotes the length of $\partial \Omega$. Let $\phi$
be the angle between $v$ and the positive tangent to $\partial \Omega$ at $x$.
Hence, $ M$ can be identified with the annulus $\A = [0,|\partial \Omega|] \times (0,\pi)$
and the billiard map $f$ can be described as

\begin{eqnarray*}
f: [0,|\partial \Omega|) \times (0,\pi) &\longrightarrow& [0,|\partial \Omega|) \times (0,\pi)\\
(s,\phi) &\longmapsto & (s',\phi').
\end{eqnarray*}

\begin{figure} [h!]
\begin{center}
\includegraphics[scale=0.23]{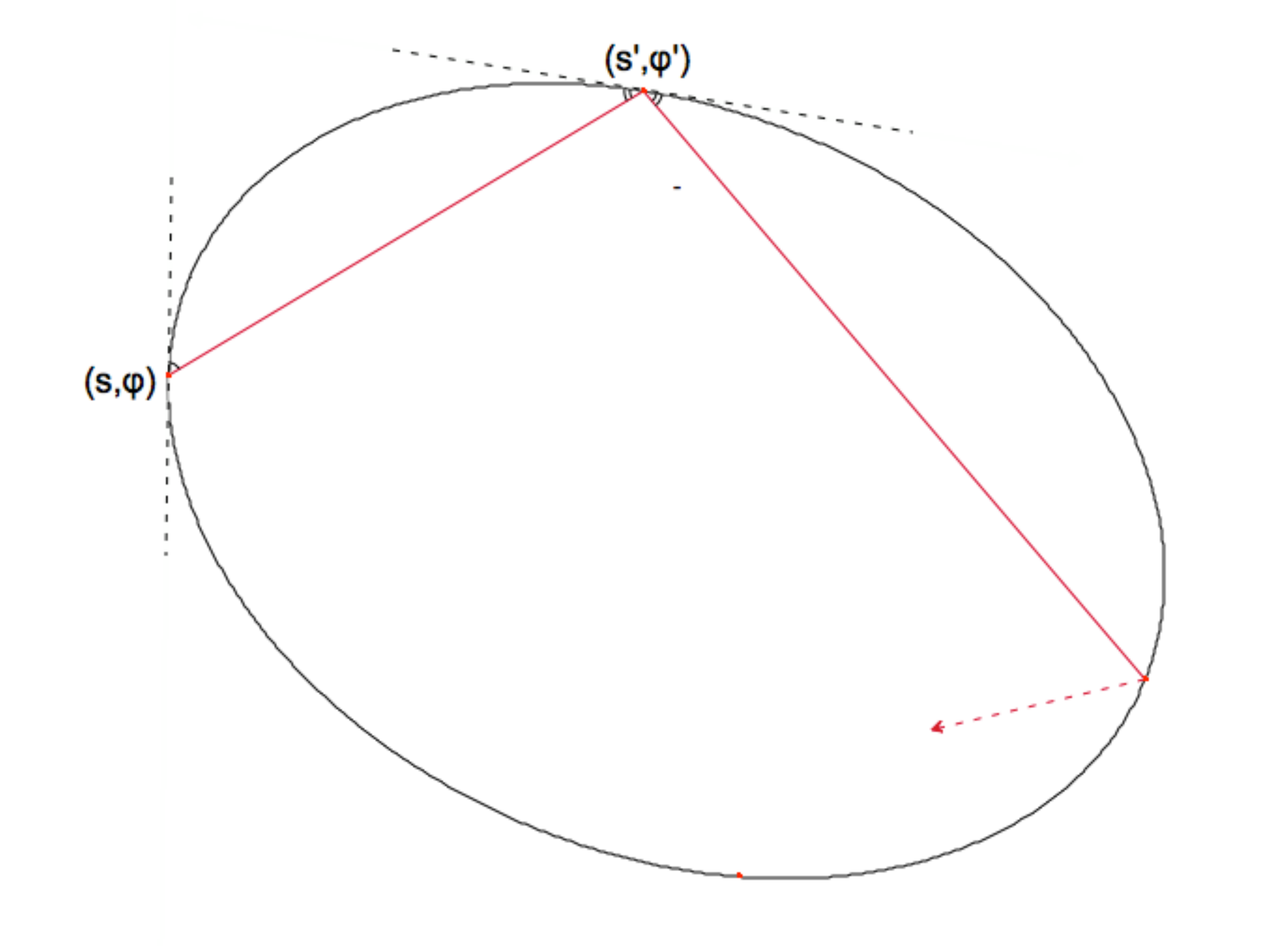}
\caption{}
\label{billiard}
\end{center}
\end{figure}

In particular $f$ can be extended to $\bar{\A}=[0,|\partial \Omega|] \times [0,\pi]$ by fixing
$f(s,0)=(s,0)$ and $f(s,\pi)= (s,\pi)$ for all $s$. Let us denote by
$$
\ell(s,s') := \|\g(s) - \g(s')\|
$$
the Euclidean distance between two points on $\partial \Omega$. It is easy to prove that
\begin{equation}\label{genfunctbill}
\left\{ \begin{array}{l}
\dfrac{\partial \ell}{\partial s}(s,s') = - \cos \phi \\
\\
\dfrac{\partial \ell}{\partial s'}(s,s') = \cos \phi'\,.\\
\end{array}\right.\\
\end{equation}

\begin{remark}
If we lift everything to the universal cover and introduce new coordinates
$(x,y)=(s, -\cos \phi) \in \R \times (-1,1)$, then the billiard map is a twist map
with $\ell$ as generating function and it preserves the area form $dx \wedge dy$. See \cite{Siburg, Tabach1, Tabach}.
\medskip 
\end{remark}

Despite the apparently simple (local) dynamics, the qualitative dynamical properties of billiard maps are extremely {non-local}. This global influence
on the dynamics translates into several intriguing {\it rigidity  phenomena}, which
are at the basis of several unanswered questions and conjectures (see, for example, \cite{ADK, Bialy, DKW, Gutkin, HKS, HKS2, Po, PT, RR, Siburg, SorDCDS,Tabach1, Tabach, Tre}). Amongst many, in this article we will address the question of classifying  {\it integrable billiards}, also known as {\it Birkhoff conjecture}. As an application of our main result, in subsection \ref{spectralellipses} we will also discuss certain spectral rigidity properties of ellipses.


\subsection{Integrable billiards and Birkhoff conjecture}

The easiest example of billiard is given by a billiard in a disc $\cD$ (for example of radius $R$). It is easy to check in this case that the angle of reflection remains constant at each reflection (see also  \cite[Chapter 2]{Tabach}). 
If we denote by $s$  the arc-length parameter ({\it i.e.}, $s\in {\R}/  {\tiny 2\pi R} \Z$) and by $\theta \in (0,\pi/2]$ the angle of reflection, then
the billiard map  has a very simple form:
$$
f(s,\theta) = (s + 2R\, \theta,\; \theta).
$$
In particular, $\theta$ stays constant along the orbit and it represents an {\it integral of motion} for the map.
Moreover, this billiard enjoys the peculiar property of
having  the phase space -- which is topologically a cylinder --  completely foliated by homotopically non-trivial invariant curves ${\mathcal C}_{\theta_0}=\{\theta\equiv \theta_0\}$. These curves correspond to concentric circles of radii 
$\rho_0= R\cos \theta_0$ and are examples of what are called {\it caustics},  {\it i.e.}, (smooth and convex) curves with the property
that if a trajectory is tangent to one of them, then it will remain tangent after each reflection (see figure \ref{circle-billiard}).

\begin{figure} [h!]
\begin{center}
\includegraphics[scale=0.3]{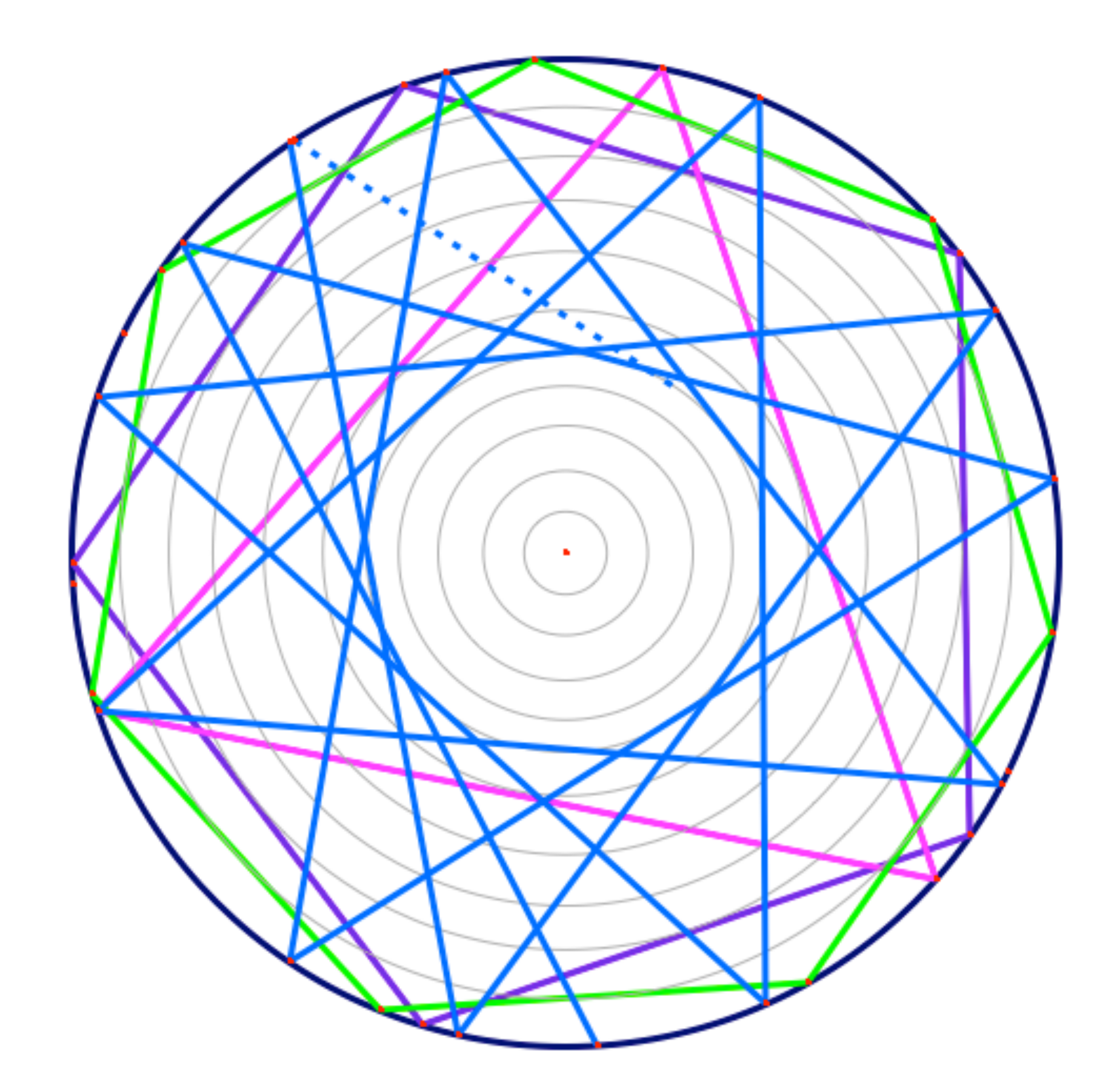}
\caption{Billiard in a disc}
\label{circle-billiard}
\end{center}
\end{figure}

A billiard in a disc is an example of an  {\it integrable billiard}. There are different  ways to define global/local integrability for billiards (the equivalence
of these notions is an interesting problem itself):
\begin{itemize}
\item[-] either through the existence of an integral of motion, globally or locally near the boundary (in the circular case  an integral of motion is given by $I(s,\theta)=\theta$),
\item[-] or through the existence of a (smooth) foliation of the whole phase space (or locally in a neighborhood of the boundary $\{\theta=0\}$), consisting of invariant curves of the billiard map; for example, in the circular case these are given by ${\mathcal C}_{\theta}$. This property translates (under suitable assumptions) into the existence of a (smooth) family of caustics, globally or locally near the boundary (in the circular case, the concentric circles of radii $R\cos \theta$).\\
\end{itemize}

In \cite{Bialy},  Misha Bialy proved the following  result concerning global integrability (see also \cite{Woi}):\\

\noindent {\bf Theorem (Bialy).}
\noindent {\it If the phase space of the billiard ball map is globally foliated by continuous
invariant curves which are not null-homotopic, then  it is a circular billiard.}\\

However, while circular billiards are the only examples of global integrable billiards,  integrability itself is still an intriguing open question.
One could  consider a  billiard in an ellipse: this is in fact  integrable (see Section \ref{sec:dynamicellipse}). Yet, the dynamical picture
is very distinct from the circular case: as it is showed in figure \ref{ellipse-billiard}, each trajectory which does not pass through
a focal point, is always tangent to precisely one confocal conic section, either
a confocal ellipse or the two branches of a confocal hyperbola (see for example
\cite[Chapter 4]{Tabach}). Thus, the confocal ellipses inside an elliptical billiards
are convex caustics, but they do not foliate the whole domain: the segment between
the two foci is left out (describing the dynamics explicitly is much more complicated: see for example \cite{Taba} and Section \ref{sec:dynamicellipse}). \\  

\begin{figure} [h!]
\begin{center}
\includegraphics[scale=0.22]{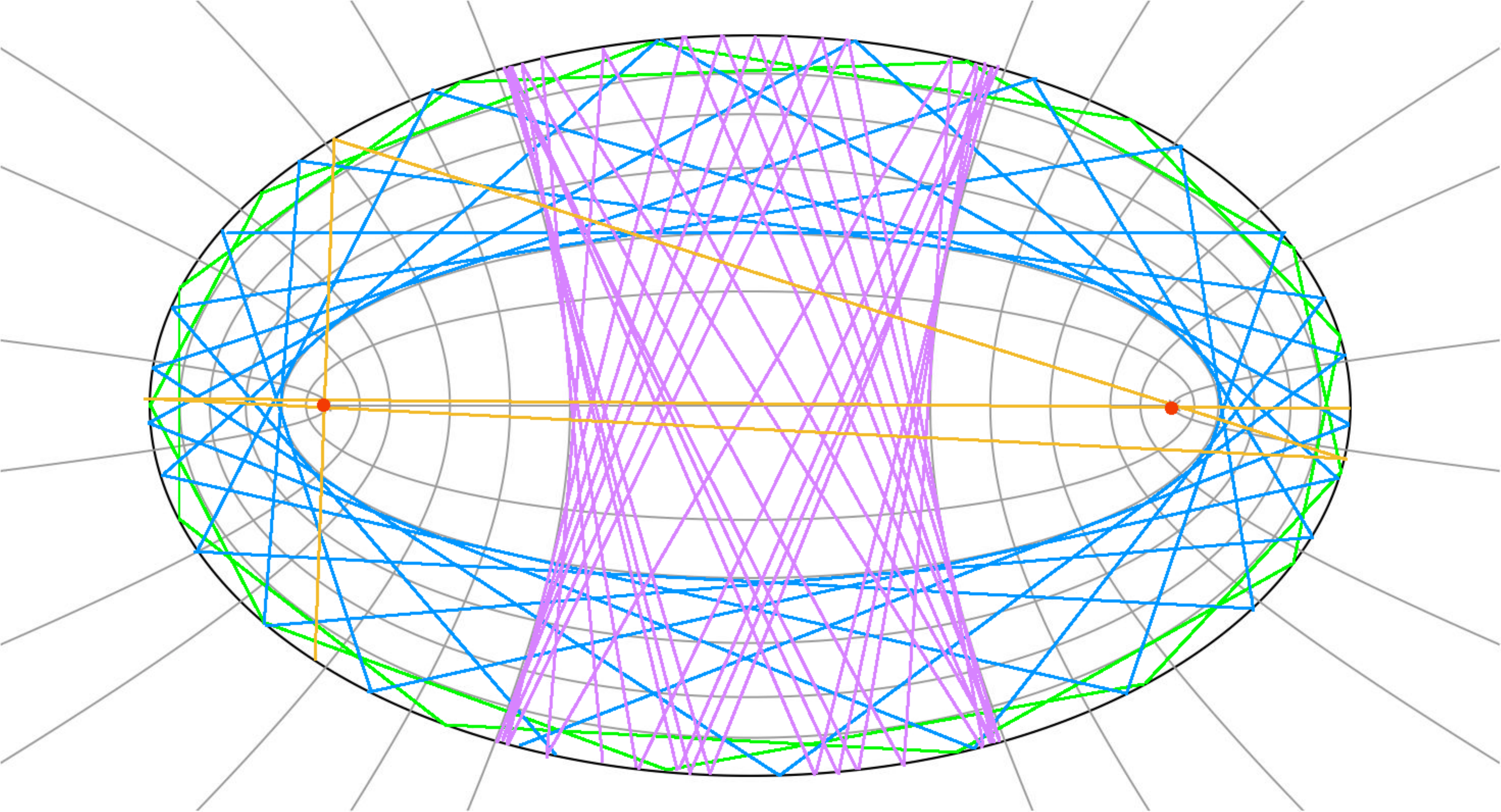}
\caption{Billiard in an ellipse}
\label{ellipse-billiard}
\end{center}
\end{figure}

\noindent{\bf Question (Birkhoff).} {\it Are there other examples 
of integrable billiards?}\\

\begin{remark}
Although some vague indications of this question can be found in \cite{Birkhoff}, to the best of our  knowledge, 
its first appearance as a conjecture was in a paper by Poritsky \cite{Po}, where the author attributes it to 
Birkhoff himself\footnote{Poritsky was Birkhoff's doctoral student and \cite{Po} was published several years 
after Birkhoff's death.}. Thereafter, references to this conjecture (either as {\it Birkhoff conjecture} or 
{\it Birkhoff-Poritsky conjecture}) repeatedly appeared in the literature: see, for example, 
Gutkin \cite[Section 1]{Gutkin}, Moser \cite[Appendix A]{Mo}, Tabachnikov \cite[Section 2.4]{Tabach1}, etc.
\end{remark}

\begin{remark}\label{Matherglancing}
In \cite{Mather82} Mather  proved the non-existence of caustics (hence, 
the non-integrability) if the curvature of the boundary vanishes at one point.
This observation justifies the restriction of our attention  to strictly convex domains. 
\end{remark}

\begin{remark}
{\it i}) Interestingly, Treschev in \cite{Tre} gives indication that there might exist analytic  
billiards, different from ellipses, for which the dynamics in a  neighborhood of the 
elliptic period-$2$ orbit is conjugate to a rigid rotation. These billiards can be seen as 
an instance of {\it local integrability}; however, this regime is somehow complementary 
to the one conjectured by Birkhoff. {Here one has local integrabilility in 
a neighborhood of an elliptic periodic orbit of period $2$, while Birkhoff  conjecture is 
related to  integrability in a neighborhood of the boundary. This gives an indication 
that these two notions of integrability do differ. }\\
{\it ii}) An algebraic version of this conjecture states that
the only billiards admitting polynomial (in the velocity) integrals are circles and ellipses.
For recent results in this direction, see \cite{BialyMironov}.
\end{remark}

Despite its long history and the amount of attention that this conjecture has captured, 
it remains still open. As far as our understanding of integrable billiards is concerned, 
the  most important related results are the above--mentioned theorem by Bialy 
\cite{Bialy} (see also \cite{Woi}), 
{a result by Innami\footnote{We are grateful to M. Bialy for pointing out this reference.} 
\cite{Innami}, in which he shows that the existence of caustics with rotation numbers accumulating to $1/2$ implies that 
the billiard must be an ellipse}\footnote{This regime of integrability is somehow diametrically opposed to ours, since we are interested in integrability near the boundary of the billiard domain.}, a result by Delshams and Ram\'irez-Ros \cite{DRR} 
in which they study entire perturbations of elliptic billiards and  prove that any 
nontrivial symmetric perturbation of the elliptic billiard is not integrable, 
{near homoclinic solutions}, and a very recent result by Avila, De Simoi 
and Kaloshin \cite{ADK} in which they show a perturbative version of 
this conjecture for  ellipses of small eccentricity.
\medskip

Let us introduce an important notion for this paper. 
\medskip

\begin{definition}\label{defrationalint}
{\rm (i)} We say $\Gamma$  is an {\rm{integrable} rational caustic} for the billiard 
map in $\Omega$, if the corresponding (non-contractible) invariant curve $\Gamma$ 
consists of periodic points; in particular, the corresponding rotation number is rational. 
\\
{\rm (ii)} If the billiard map inside $\Omega$ admits integrable rational caustics of rotation number $1/q$ for all $q > 2$, we say that 
$\Omega$ is {\rm  rationally integrable. }
\\
\end{definition}

\begin{remark}
A simple sufficient condition for rational integrability is the following (see \cite[Lemma 1]{ADK}).
Let ${\mathcal C}_{\Omega}$ denote the union of all smooth convex caustics of the billiard in  $\Omega$;
if the interior of ${\mathcal C}_{\Omega}$ contains caustics of rotation number 
$1/q$ for any $q > 2$, then $\Om$ is rationally integrable. \medskip 
\end{remark}

Our main result is the following.

\medskip 

\begin{maintheorem}[{\bf Local Birkhoff Conjecture}] \label{thm:main}  
Let $\E_{0}$ be an ellipse of eccentricity $0\leq e_0 <1$ and semi-focal distance $c$; {let $k\ge 39$}.  For every $K>0$, 
there exists $\e=\e(e_0,c,K)$ such that the following holds:
if $\Omega$ is a rationally integrable $C^{k}$-smooth domain so that $\partial\Omega$ is $C^{k}$-$K$-close and 
$C^1$-$\e$-close to $\E_0$, then $\Omega$ is an ellipse.\\
\end{maintheorem}

\begin{remark}
One could replace the smallness condition in the $C^1$-norm with a smallness condition with respect to the $C^0$-topology (this can be showed by using interpolation inequalities and the convexity of the domains)\footnote{This remark was suggested to the authors by Camillo De Lellis.}.\\
\end{remark}

\begin{remark}
In \cite{HKS} we  prove a similar rigidity statement for 
a different type of rational integrability. 
{Namely, {we describe an algorithm to prove that for any given }{$q_0\geq 3$} there exists $e_0=e(q_0)>0$ {such that every sufficiently smooth 
perturbation of $\E_e$, with $0 < e<e_0$},} having integrable rational caustics 
of rotation numbers $p/q$, for all $0 < p/q < 1/q_0,$
must be an ellipse. {This algorithm is conditional on checking the invertibility of finitely many explicit matrices, which we   prove  in the cases $q_0=3,4,5$.}
 Observe that  the analysis in \cite{HKS} 
only applies to ellipses of small eccentricity as in \cite{ADK}, {since Taylor expansions with respect to $e$ are needed in order to get higher order (integrability) conditions.}\\
\end{remark}

One of the crucial ideas to extend the analysis beyond the almost circular case in \cite{ADK},  is to consider analytic extensions of   
the action-angle coordinates of the elliptic billiard (more specifically, of the boundary parametrizations induced by each integrable caustic) 
and to study their singularities (see Section \ref{sec:adapted-basis}). These functions  can be explicited expressed in terms of elliptic 
integrals and Jacobi elliptic functions (see subsection \ref{subsec:elliptcinteg}). {This analysis
will be exploited to define a {\it dynamically-adapted basis} for 
$L^2(\R/2\pi\Z)$, which will provide  the main framework to carry out our analysis. See subsection \ref{schemeourproof} for a more detailed 
description of the scheme of the proof.

In addition to this, in Appendix \ref{affineflowideas} we propose a possible strategy to use the {\it affine length shortening (ALS) flow} 
(see, for instance, \cite{SaTa}) as a potential approach to prove the {\it global} Birkhoff conjecture. Our proposal is based on the fact 
that the ALS flow evolves any convex domain with  smooth boundary into an ellipse in finite time.

\medskip

\subsection{Applications for spectral rigidity of ellipses} \label{spectralellipses}
In this subsection we describe an interesting application of our Main Theorem 
to spectral rigidity properties of ellipses\footnote{This was suggested to the authors 
by Hamid Hezari.}.

Let $\Omega$ be a smooth strictly convex (planar) domain.
While the dependence of the dynamics on the geometry of the domain is well perceptible, an intriguing challenge is to understand to  which extent dynamical 
information can be used to reconstruct the shape of the domain.  A particular 
interesting problem in this direction is to unravel  which information on the geometry 
of the billiard domain, the set of periodic orbits does encode. More ambitiously, 
one could wonder whether a complete knowledge of this set allows one to 
reconstruct the shape of the billiard and hence the whole of its dynamics. Several 
results in this direction (and in related ones) are contained, for instance, in 
\cite{Bangert, DKW, GM, HZ, HKS, MM, MM2, Popov, PT, Siburg, SorDCDS, Zelditch}.\\

Let us start by introducing the {\it Length Spectrum} of a domain $\Om$.

\begin{definition}[{\bf Length Spectrum}] 
Given a domain $\Omega$, the length spectrum of $\Omega$ is given by the set  of 
lengths of its periodic orbits, counted with multiplicity:
\[
\mathcal  L(\Omega) := \mathbb N 
\{\text{ lengths of closed geodesics in }\Omega\} \cup 
\mathbb N \, |\partial \Omega|,
\]
where $|\partial \Omega|$ denotes the length of the boundary of $\Omega$.\\
\end{definition}

A remarkable relation exists between the length spectrum of a billiard in 
a convex domain $\Omega$  and the spectrum of the Laplace operator in $\Omega$ 
with Dirichlet boundary conditions (similarly for Neumann boundary conditions):
\begin{equation}\label{DirichletProblem}
\left\{
\begin{array}{l}
\Delta f = \lambda f \quad \text{in}\; \Om \\
f|_{\partial \Om} = 0.\end{array}\right.\\
\end{equation}
From the physical point of view, the eigenvalues $\lambda$'s
are the eigenfrequencies of the membrane $\Om$ with
a fixed boundary. K. Andersson and R. Melrose \cite{AM}
proved the following relation between the Laplace spectrum 
and the length spectrum. Call the function 
\[
w(t):=\sum_{\lambda_i \in spec \Delta}
\cos (t \sqrt{-\lambda_i}),
\]
the wave trace.
Then, the wave trace $w(t)$  is a well-defined 
generalized function (distribution) of $t$, smooth 
away from the length spectrum, namely, 
\begin{equation}\label{AndersonMelroseformula}
\mbox{sing. \!\!\!\! supp.} \big( w(t) \big)\subseteq  \pm \cL(\Om) \cup \{0\}.
\end{equation}
So if $l > 0$ belongs to the singular support of this distribution, then there exists 
either a closed billiard trajectory of length $l$, or a closed geodesic of length 
$l$ in the boundary of the billiard table.

Generically, equality holds in \eqref{AndersonMelroseformula}. 
More precisely, if no two distinct orbits have the same length and 
the Poincar\'e map of any periodic orbit is non-degenerate, then the singular 
support of the wave trace coincides with $\pm \cL(\Om) \cup \{0\}$ (see e.g. 
\cite{Popov}). This theorem implies that,  at least for generic domains, one can 
recover the length spectrum from the Laplace one. \\

This relation between periodic orbits and spectral properties of the domain, immediately recalls a more famous spectral problem (probably the most famous): 
{\it Can one hear the shape of a drum?}, as formulated in a very suggestive way 
by Mark Kac \cite{Kac} (although the problem had been already stated by Hermann 
Weyl). More precisely: is it possible to infer information about the shape of a drumhead 
({\it i.e.}, a domain) from the sound it makes ({\it i.e.}, the list of basic harmonics/ 
eigenvalues of the Laplace operator with Dirichlet or Neumann boundary conditions)?
This question has not been completely solved yet: there are several negative 
answers (for instance by Milnor \cite{Milnor} and Gordon, Webb, and Wolpert 
\cite{GordonWebbWolpert}), as well as some positive ones.  

Hezari and Zelditch, {going in the affirmative direction,} proved in~\cite{HZ} 
that, given an ellipse ${\mathcal E}$, any one-parameter $C^\infty$-deformation
$\Om_\e$ which preserves the Laplace spectrum (with respect to either Dirichlet 
or Neumann boundary conditions) and the $\mathbb Z_2\times\mathbb Z_2$ 
symmetry group of the ellipse has to be \emph{flat} ({\it i.e.}, all derivatives have 
to vanish for $\e = 0$). {Popov--Topalov \cite{PT} recently extended these results 
(see also \cite{Zelditch}).} Further historical remarks on the inverse spectral problem 
can  be also found in~\cite{HZ}. In \cite{OPS1,OPS2,OPS3} Osgood, Phillips and Sarnak 
showed that isospectral sets are necessarily compact in the $C^\infty$  topology
in the space of domains with $C^\infty$ boundary. In~\cite{Sarnak} Sarnak 
conjectures that the set of smooth convex domains isospectral to a given smooth 
convex domain is finite (for a partial progress on this question,  see \cite{DKW}). \\

One of the difficulties in working with the length spectrum   is that all of these information 
on the periodic orbits come in a non-formatted way. For example, 
we lose track of the rotation number corresponding to each length. A way to 
overcome this difficulty is to ``organize'' this set of information in a more systematic 
way, for instance by associating to each length the corresponding rotation number.  
This new set  is called the {\it Marked Length Spectrum} of $\Omega$ and denoted 
by $\cM\cL_\Om$:
\[
\cM\cL(\Omega) :=  \left\{ \left({\mbox{length}(\g), {\mbox{rot}(\g)}}\right): \; 
\mbox{$\g$ periodic orbit of the billiard  in $\Omega$} \right\},
\]
where $\mbox{rot}(\g)$ denotes the rotation number of $\gamma$.\footnote{
In the case of negatively curved surfaces without boundary the marked length 
spectrum consists of pairs of homotopy classes and length of the shortest geodesic 
in that homotopy class. Guillemin and Kazhdan \cite{GK} proved local rigidity 
with respect to this marked length spectrum. Global version of this result was 
obtained by Otal \cite{Ot} and Croke \cite{Cr}.}\\

One could also refine this set of information by considering 
not the lengths of all orbits, but selecting some of them.  More precisely, for each rotation number $p/q$ in lowest terms, one could consider the maximal length 
among those having rotation number $p/q$. We call this map the {\it Maximal Marked 
Length Spectrum} of $\Omega$, namely $\mathcal M\mathcal L^{\rm max}(\Omega) : 
\mathbb Q \cap [0, 1/2]\ \to \mathbb R$ given by:
\begin{eqnarray*}
\cM\cL^{\rm max}_\Om({p}/{q}) =   \max \Big \{ \mbox{lengths of periodic orbits with rot. number}\; p/q \Big \}.
\end{eqnarray*}

\begin{remark}\label{remarkbetaMLS}
The maximal marked length spectrum is  closely related to {\it Mather's minimal average action} (or {\it Mather's $\beta$-function}) of the associated billiard map in 
the domain, as it was  pointed out in \cite{Siburg}. Briefly speaking, this function -- which can be defined for any exact area preserving twist map, not necessarily 
a billiard map -- associates to any fixed rotation number (not only rational ones) 
the minimal average action of orbits with that  rotation number (whose existence, 
inside a suitable interval, is ensured by the twist condition). These action-minimizing 
orbits are of particular interest from a dynamical point of view and play a key-role 
in what is nowadays called {\it Aubry-Mather theory}; we refer the reader to 
\cite{Bangert, MatherForni, Siburg, SorLecNotes} for a presentation of this topic.  
\\
In the case of billiard maps, since the {\it action} coincides (up to a negative sign) 
with the euclidean length of the segment joining  two subsequent rebounds, we 
have that the minimal average action of periodic orbits can be expressed in terms 
of the maximal marked length spectrum; namely:
         \begin{equation}\label{betaandMLS}
         \beta_\Omega(p/q) = - \frac{1}{q} {\mathcal ML}^{\rm max}_\Om({p}/{q}) 
         \qquad \forall\; 0<p/q\leq 1/2.
         \end{equation}
In particular, this object encodes many interesting dynamical information on 
the billiard map.  For example, using the result in \cite{Mather90}, one can deduce 
that $\beta$ is differentiable at $p/q$ if and only if there exists a rational caustic 
of rotation number $p/q$.  See \cite{Siburg} for a detailed presentation of this and 
many other properties.
\end{remark}

\medskip

Let us now address the following question.
\medskip

\noindent{\bf Question.}  {\it Let $\Omega_1$ and $\Omega_2$ be two strictly convex planar domains with smooth boundaries and assume that they have the same maximal marked Length spectrum, namely $\cM\cL^{\rm max}_{\Om_1} \equiv \cM\cL^{\rm max}_{\Om_2}$ (or equivalently, $\beta_{\Omega_1} \equiv \beta_{\Omega_2}$). 
Is it true that $\Omega_1$ and $\Omega_2$ are isometric?}

\medskip

\begin{remark}
It is known that if $\Omega$ has the same marked length spectrum of a disc, then it is indeed a disc; for a proof of this result, see for example \cite[Corollary 3.2.17]{Siburg}. 
Another proof can be obtained by looking only at the Taylor coefficients of the $\beta$-function at $0$ (which are related to the so-called {\it Marvizi-Melrose invariants}); it turns out that the first and the third order coefficients always satisfy an inequality, which becomes an equality if and only if the domain is a disc (see \cite[Section 8]{MM} and \cite[Corollary 1]{SorDCDS}).
\end{remark}

\medskip

It would be interesting to find a similar characterization for elliptic billiards, namely that 
the maximal marked length spectrum (resp., the $\beta$-function) univocally determines ellipses amongst all possible Birkhoff billiards.

In \cite[Proposition 1]{SorDCDS}, by looking at the Taylor expansion of 
the $\beta$-function at $0$ (actually, only at the first and third order coefficients), 
it was pointed out a much weaker result, namely that the isospectrality condition 
determines  univocally a given ellipse within the family of  ellipses (up to rigid motions,  
{\it i.e.}, the composition of a translation and a rotation)).\\

From our Main Theorem, we can now deduce the following spectral rigidity results 
for ellipses.

\begin{corollary}[\bf Local length--spectral uniqueness 
of ellipses]
\label{cor:length-rig} 
Let $\Omega$ be a smooth strictly convex domain $\Omega$ sufficiently close to 
an ellipse.
\begin{itemize}
\item[i)]  If $\Omega$ has  the same maximal marked length spectrum  (or Mather's $\beta$-function)
of an ellipse, then it is an ellipse. 
\item[ii)] If its Mather's $\beta$-function  is differentiable at all rationals $1/q$ with $q\geq 3$, then $\Omega$ is an ellipse.
\end{itemize}
\end{corollary}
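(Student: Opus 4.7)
My plan is to reduce both parts of the corollary to the Main Theorem \ref{thm:main} by verifying that $\Omega$ is rationally integrable in the sense of Definition \ref{defrationalint} and then invoking the smallness hypothesis. The bridge between the spectral hypotheses and rational integrability is supplied by the Mather-type criterion recalled in Remark \ref{remarkbetaMLS}: differentiability of $\beta_\Omega$ at $p/q$ is equivalent to the existence of an invariant curve of periodic points of rotation number $p/q$, namely, an integrable rational caustic of that rotation number.

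I would handle part \emph{(ii)} first, since it is essentially immediate from this criterion. Under the assumption that $\beta_\Omega$ is differentiable at every $1/q$ with $q \geq 3$, Mather's criterion produces integrable rational caustics of rotation number $1/q$ for all $q > 2$, which is precisely the content of Definition \ref{defrationalint}\,(ii). Hence $\Omega$ is rationally integrable. Once $\Omega$ is sufficiently close to an ellipse in the $C^k$/$C^1$ sense of the Main Theorem, the latter concludes that $\Omega$ is itself an ellipse.

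Next I would derive part \emph{(i)} from part \emph{(ii)}. By the identity \eqref{betaandMLS}, equality of maximal marked length spectra is equivalent to the identity $\beta_\Omega \equiv \beta_{\E}$ with the reference ellipse $\E$. Since the elliptic billiard is integrable and admits a smooth confocal family of caustics of every rotation number in $(0,1/2)$ (as recalled in the discussion around Figure \ref{ellipse-billiard}), Mather's criterion gives that $\beta_{\E}$ is differentiable at every rational $1/q$ with $q \geq 3$. Consequently $\beta_\Omega$ enjoys the same property, and part \emph{(ii)} applies.

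The main, and really only, technical point to check is the matching of quantifiers: the vague "sufficiently close to an ellipse" in the statement of the corollary must be read as $C^1$-$\e(e_0,c,K)$-close to some ellipse $\E_0$ of eccentricity $e_0$ and semi-focal distance $c$, with $K$ any $C^k$-bound for $\partial\Omega$, where $\e$ is the threshold produced by Theorem \ref{thm:main}. I do not anticipate any other obstacle, since the substantive inputs are exclusively the Main Theorem and the Mather-type equivalence recalled in Remark \ref{remarkbetaMLS}, both of which are already established in or quoted by the paper.
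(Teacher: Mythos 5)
Your proposal is correct and follows essentially the same route as the paper: part (ii) is derived from the Mather-type differentiability criterion in Remark \ref{remarkbetaMLS} combined with the Main Theorem, and part (i) is reduced to part (ii) via the identity \eqref{betaandMLS} and the differentiability of the elliptic $\beta$-function. The quantifier-matching remark you add is a sensible clarification but not a departure from the paper's argument.
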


\medskip

Moreover, the following spectral rigidity result holds.

\begin{corollary}[\bf Spectral rigidity of ellipses]
\label{cor:spectra-rig} 
\hfill
\begin{itemize}
\item[i)] Ellipses are {\em (maximal) marked-length-spectrally rigid}, meaning that if 
$\Omega_t$ is a smooth deformation of an ellipse which keeps fixed the (maximal) marked length spectrum, 
then it consists of a rigid motion. 
\item[ii)] Ellipses are {\em length-spectrally rigid}, meaning that if 
$\Omega_t$ is a smooth deformation of an ellipse which keeps fixed the  length spectrum, 
then it consists of a rigid motion. \\
\end{itemize}
\end{corollary}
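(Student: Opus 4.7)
The strategy for both parts is to reduce, at each deformation parameter $t$, to the Local Length--Spectral Uniqueness statement (Corollary \ref{cor:length-rig}), so as to conclude that every $\Omega_t$ is itself an ellipse; then to invoke \cite[Proposition 1]{SorDCDS}, which asserts that an ellipse is uniquely determined, up to rigid motion, by its maximal marked length spectrum (equivalently, by its $\beta$-function, via \eqref{betaandMLS}) within the family of all ellipses. Since $t\mapsto \Omega_t$ is smooth and $\Omega_0=\E_0$, for $t$ small $\partial\Omega_t$ is $C^k$-bounded and $C^1$-close to $\partial\E_0$, so the smallness hypotheses of the Main Theorem and Corollary \ref{cor:length-rig} are met.

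For part (i), the assumption gives $\cM\cL^{\rm max}_{\Omega_t}\equiv \cM\cL^{\rm max}_{\E_0}$ (the marked case implies this by retaining the largest length in each rotation class), hence $\beta_{\Omega_t}\equiv \beta_{\E_0}$ at every rational by \eqref{betaandMLS}. Corollary \ref{cor:length-rig}(i) then forces $\Omega_t$ to be an ellipse. The cited uniqueness among ellipses implies $\Omega_t$ is isometric to $\E_0$, so the deformation consists of rigid motions.

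For part (ii), the unmarked length spectrum must first be upgraded to differentiability information on $\beta_{\Omega_t}$. For each $q\ge 3$, the integrable rational caustic of rotation $1/q$ in $\E_0$ produces a continuous family of $q$-periodic orbits, all of common length $\ell_q:=-q\,\beta_{\E_0}(1/q)\in\cL(\E_0)$. Under a small smooth perturbation, the Poincar\'e--Birkhoff theorem yields at least two periodic orbits of rotation $1/q$ in $\Omega_t$: the action-minimizing (maximal-length $L^+(t)$) and the action-maximizing (minimal-length $L^-(t)$) orbits, both depending continuously on $t$ and satisfying $L^\pm(0)=\ell_q$. Because $\cL(\E_0)$ is locally finite near $\ell_q$ and the preservation assumption forces $L^\pm(t)\in\cL(\E_0)$, one deduces $L^+(t)=L^-(t)=\ell_q$ for $t$ sufficiently small. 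This coincidence of extremal average actions at $1/q$ is equivalent to differentiability of $\beta_{\Omega_t}$ at $1/q$ (by \cite{Mather90}; cf. Remark \ref{remarkbetaMLS}). As this holds for every $q\ge 3$, Corollary \ref{cor:length-rig}(ii) yields that $\Omega_t$ is an ellipse, and one concludes as in (i).

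The main obstacle is in part (ii): one must rule out accidental coincidences inside $\cL(\E_0)$ which could cause $L^\pm(t)$ to jump between distinct rotation classes and thereby invalidate the extraction of $\beta$-differentiability. A quantitative separation of the values $\ell_q$ as $q$ varies---readable from the explicit elliptic-integral expressions developed in subsection \ref{subsec:elliptcinteg}---together with the Lipschitz dependence of the Birkhoff lengths on $t$, controls this separation provided $t$ is small enough.
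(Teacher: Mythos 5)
Your treatment of part (i) follows the paper's route: apply Corollary \ref{cor:length-rig} to conclude that each $\Omega_t$ is an ellipse, then invoke \cite[Proposition 1]{SorDCDS} (uniqueness within the family of ellipses up to rigid motion). You actually cite length-rig~(i), which is the more direct reference; the paper writes length-rig~(ii), presumably a slip. This part is correct.

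For part (ii) you diverge from the paper in a substantive way, and the divergence is where the gap lies. The paper simply invokes \cite[Proposition 3.2.2]{Siburg} --- a $C^0$ iso-length-spectral deformation is automatically iso-\emph{marked}-length-spectral --- which reduces (ii) to (i) in one step. You instead attempt to derive directly, from bare preservation of the unmarked length spectrum, the differentiability of $\beta_{\Omega_t}$ at every $1/q$, via Poincar\'e--Birkhoff orbits of rotation number $1/q$. This is essentially a sketch of a \emph{proof} of Siburg's proposition rather than a use of it, and as written it has two real gaps which you partially acknowledge but do not close. First, you need local finiteness of $\mathcal L(\E_0)$ near each $\ell_q$: the length spectrum of an ellipse has accumulation points (the lengths $\ell_{p/q_n}$ with $p$ fixed and $q_n\to\infty$ accumulate at $p\,|\partial\E_0|$), so while $\ell_{1/q}$ is indeed isolated, this requires an argument, not an appeal to ``local finiteness'' as a generic fact. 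Second, Poincar\'e--Birkhoff produces a minimizing orbit and a \emph{minimax} (mountain-pass) orbit, not an ``action-maximizing'' one, and the step from $L^+(t)=L^-(t)$ to differentiability of $\beta_{\Omega_t}$ at $1/q$ is not immediate: Mather's criterion says $\beta$ is differentiable at $p/q$ iff the minimizing orbits of that rotation number foliate an invariant circle (equivalently, the Peierls barrier vanishes), and identifying the vanishing of the barrier with the coincidence of the two Birkhoff-orbit lengths needs justification or a reference. Both difficulties are precisely what Siburg's Proposition 3.2.2 packages up, so the cleaner (and the paper's) route is to cite it and then apply part (i).
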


\begin{proof} ({Corollary \ref{cor:length-rig}})
Assertion i) follows from assertion ii), using  \eqref{betaandMLS} and recalling that the $\beta$-function of an ellipse is differentiable in $[0,1/2)$, since the corresponding billiard map is integrable. 
As for the proof of ii), it follows from the differentiability assumptions on $\beta$ and from what recalled at the end of Remark \ref{remarkbetaMLS} (see also \cite{Mather90, Siburg}), that there exist  integrable rational  caustics for all rotation number $1/q$ for any $q\geq 3$. Hence
our billiard is rationally integrable (see Definition \ref{defrationalint}).
Applying the Main Theorem, since $\Omega$ is close to an ellipse, then it must be an ellipse.
\end{proof}

\medskip

\begin{proof} ({Corollary  \ref{cor:spectra-rig}})
Assertion i) follows from Corollary \ref{cor:length-rig} ii) and the fact that the $\beta$ function (equivalently, the maximal marked length spectrum) univocally determines a given ellipse within the family of  ellipses (up to rigid motions); see \cite[Proposition 1]{SorDCDS}.\\
To prove assertion ii), one needs to use  \cite[Proposition 3.2.2]{Siburg}, which shows that  a
$C^0$ iso-length spectral deformation is necessarily an iso-marked length spectral deformation. 
Then, the claim  follows by applying  i).
\end{proof}

\medskip

\subsection{Organization of the article}
For the reader's convenience, here follows a brief description of how the article is organized.

In Section \ref{sec:dynamicellipse} we describe our setting and introduce elliptic coordinates (see subsection \ref{secellipticpolarcoords}), while 
in Section \ref{sec:action-angle-ellipse}  we recall some  definitions and some needed properties of elliptic integrals and elliptic functions (see subsection \ref{subsec:elliptcinteg})
and use them to provide a more precise  description of the billiard dynamics inside an ellipse (see subsection \ref{subsec:ellipticdynamics}).

In Section \ref{secstrategy} we outiline the scheme  of the proof of our Main Theorem,
 both for perturbations of circular billiards (see subsection \ref{schemecircle}) and for perturbations of general elliptic ones (see subsection \ref{schemeourproof});  we refer to this latter subsection for a detailed description of the contents of Sections \ref{sec:action-angle-ellipse}--\ref{sec:proof-MainThm}.

 In order to make the presentation clearer and easier to follow, we deferred several proofs of technical claims and some complementary material  to Appendices \ref{App:paramellipses}--\ref{AppendixTechnical}. Finally, in Appendix \ref{affineflowideas} we outline a possible strategy  to approach the {global} Birkhoff conjecture, by means of the {affine 
length shortening flow}.\

\medskip 

\subsection{Acknowledgements.} 
VK acknowledges partial support of the NSF grant DMS-1402164 and the hospitality of the ETH Institute for Theoretical Studies and 
the support of  Dr. Max R\"ossler,  the Walter Haefner Foundation and the ETH Zurich Foundation. 
AS acknowledges the partial support of the Italian MIUR research grant: PRIN- 2012-74FYK7
``{\it Variational and perturbative aspects of nonlinear differential problems}''.
VK is grateful to Jacopo De Simoi and Guan Huang for  useful discussions. AS would like to thank Pau Martin 
and Rafael Ram\'irez-Ros for  useful discussions during his stay at UPC.
The authors are also indebted to  Hamid Hezari, whose  valuable remarks led to Corollaries \ref{cor:length-rig} 
and \ref{cor:spectra-rig}. {Finally,  the authors wish to express their sincere gratitude to a referee for really 
careful reading of the paper and many useful suggestions, which led to significant improvements of 
the exposition and clarity of the proof.}

\vspace{10 pt}

\section{Notation and Setting} \label{sec:dynamicellipse}

Let us consider the ellipse 
$$
{\E_{e_0,c}} =\left \{(x,y)\in \R^2:\; \frac{x^2}{a^2} + \frac{y^2}{b^2} = 1\right\},
$$
centered at the origin and with semiaxes of lenghts, respectively, $0<b\leq a$; 
in particular $e_0$ denotes its eccentricity, given by $e_0=\sqrt{1-\frac{b^2}{a^2}} \in [0,1)$ 
 and $c=\sqrt{a^2-b^2}$ the semi-focal distance.
{Observe that when $e_0=0$, then $c=0$ and $\E_{0,0}$ degenerates to a $1$-parameter 
family of circles centered at the origin.} \\

The family of confocal elliptic caustics  in $\E_{e_0,c}$ is given by 
(see also figure \ref{ellipse-billiard}):
\begin{equation}\label{caustic}
C_{\l} = \left\{
(x,y)\in \R^2: \; \frac{x^2}{a^2-\l^2} +  \frac{y^2}{b^2-\l^2}=1
\right\} \qquad 0<\l<b.
\end{equation}
Observe that the boundary itself corresponds to $\l=0$, while the limit case $\l=b$ 
corresponds to the the two foci ${\mathcal F}_{\pm}=(\pm \sqrt{a^2-b^2},0)$.
{Clearly, for $e_0=0$ we  recover the family of concentric circles described in Figure \ref{circle-billiard}.}

\medskip

\subsection {\bf Elliptic polar coordinates}  \label{secellipticpolarcoords}
A more convenient coordinate frame for addressing this question is provided by the so-called 
{\it elliptic polar coordinates} (or, simply, elliptic coordinates)
$(\mu,\f) \in \R_{\geq 0} \times \R/2\pi \Z$, given by:
$$
\left\{
\begin{array}{l}
x= c \cosh \mu \, \cos\f \\
y= c \sinh \mu \, \sin\f,
\end{array}
\right.
$$
where $c=\sqrt{a^2-b^2}>0$ represents the semi-focal distance {(in the case $e_0=0$, this parametrization degenerates to the usual polar coordinates)}. 
Observe that for each $\mu_*> 0$, the equation $\mu\equiv \mu_*$ represents  a confocal ellipse, while for each $\f_* \in [0,2\pi)\setminus\{0,\frac{\pi}{2}, \pi, \frac{3\pi}{2}\}$ the equation $\f \equiv \f_*$ corresponds to one of the two branches of a confocal hyperbola; these grid-lines are mutually orthogonal. Moreover, the degenerate cases $\mu_*\equiv 0$ and $\f_*\equiv 0,\frac{\pi}{2}, \pi, \frac{3\pi}{2}$ describe, respectively, the (cartesian) segment $\{-c\leq x \leq c\}$, and the (cartesian) half-lines $\{x\geq c\}$, $\{y\geq 0\}$, $\{x\leq -c\}$ and $\{y\leq 0\}$.\\

Therefore, in these elliptic polar coordinates  $\E_{e_0,c}$ becomes:
$$
\E_{e_0,c} =  \left\{(\mu_0,\f),\; \f \in  \R/2\pi \Z \right\},
$$
where  $\mu_0 =\mu_0(e_0):= \arcosh\left({1}/{e_0}  \right)$ {(the dependence on $c$ is in the definition of the coordinate frame)}.\\

\medskip

Let us denote by $\Ell$ the set of ellipses in $\R^2$ 
{with circles being degenerate points}.
This is a $5$-dimensional family of strictly convex curves  parametrized, for example, by the cartesian coordinates 
of its centre $(x_0,y_0)\in \R^2$, the semi-focal distance $c>0$, 
the parameter $\mu_0>0$ corresponding to the eccentricity, 
and the angle $\theta\in [0,\pi)$ between the major semiaxis 
and the $x$-axis (notice that $\theta$ is not well defined 
for circles).  More specifically, for each 
$(x_0,y_0,c,\mu_0,\theta)  \in \R^2 \times (0,+\infty)^2\times [0,\pi)$ 
we associate the (parametrized) ellipse 
{\footnotesize
\begin{eqnarray}\label{paramellipse}
\E(x_0,y_0,c,\mu_0,\theta) :=
\left\{ 
\left(
\begin{array}{c}
x-x_0\\
y-y_0
\end{array}
\right) =
\left(
\begin{array}{cc}
\cos \theta  & - \sin\theta\\
\sin \theta & \cos\theta
\end{array}
\right)
\left(
\begin{array}{c}
c \cosh \mu_0 \cos\f\\
c \sinh \mu_0 \sin\f
\end{array}
\right),\; \f\in [0,2\pi)\right\}.
\end{eqnarray}
}

\smallskip

In the following we will use the shorthand $\E_{e_0,c}$ for 
$\E(0,0, c, \mu_0(e_0), 0)$. In particular, $\E_{0,c}$ consists of 
a {1-parameter family of circles centered at the origin}.

\medskip

\section{Action-angle coordinate of elliptic billiards}\label{sec:action-angle-ellipse}
Here we define and study action-angle coordinates for elliptic billiards. 

\subsection {Elliptic integrals and Jacobi elliptic functions} \label{subsec:elliptcinteg}
Let us recall some basic definitions on elliptic integrals and elliptic functions that will be used in the following; we refer the reader, for instance, to \cite{Akhiezer} for a more comprehensive presentation. \\

Let $0\leq k<1$. We define the following {\it elliptic integrals}.\\

\begin{itemize}
\item {\it Incomplete elliptic integral of the first kind}:
$$
F(\f;k) := \int_0^\f \frac{1}{\sqrt{1- k^2 \sin^2 \f}} d\f.
$$
In particular, $k$ is called the {\it modulus} and $\f$ the {\it amplitude}. Moreover, the quantity $k':=\sqrt{1-k^2}$ is often called
the {\it complementary modulus}. Observe that for $k=0$ we have $F(\f;0)=\f$; on the other hand, $F(\varphi;1)$ has a pole at $\varphi=\frac{\pi}{2}$.\\

\item {\it Complete elliptic integral of the first kind}:
$$
K(k) = F\left(\frac{\pi}{2};k\right).\\
$$

\end{itemize}

\medskip

Let us  recall that an {\it elliptic function} is a doubly-periodic meromorphic function, {\it i.e.}, it is periodic in two directions and hence, it is determined by its values on a fundamental parallelogram. Of course, a non-constant elliptic function cannot be holomorphic, as it would be a bounded entire function, and by Liouville's theorem it would be constant.
In particular, elliptic functions must have at least two poles  in a fundamental parallelogram (counting multiplicities); it is easy to check, using the periodicity, that a contour integral around its boundary must vanish, implying that the residues of all simple poles must cancel out.

\medskip

{\it Jacobi Elliptic functions}  are obtained by inverting incomplete elliptic integrals of the first kind. More specifically, 
let
\begin{eqnarray}\label{JEF}
u=F(\f;k) = \int_0^\f \frac{d\tau}{\sqrt{1- k^2 \sin \tau}} 
\end{eqnarray}
($u$ is often called the {\it argument}). If $u$ and $\f$ are related as above (we can also write $\f =  \am (u;k)$, called the {\it amplitude} of $u$) then we define the Jacobi elliptic functions as:
\begin{eqnarray*}
\sn (u;k) &:=& \sin (\am (u;k)) \\
\cn (u;k) &:=& \cos (\am (u;k)).
\end{eqnarray*}

\medskip

\begin{remark}\label{rmk14}
These two elliptic functions have periods $4K(k)$ (in the real direction) and $4iK(k')$ (in the imaginary direction). Moreover, they have two simple poles: at $u_1=iK(k')$, with residue, respectively, $1/k$ and  $-i/k$, and  at $u_2=2K(k)+iK(k')$ with residue, respectively, $-1/k$ and $i/k$.
\end{remark}

\medskip

\subsection{Elliptic billiard dynamics and caustics} \label{subsec:ellipticdynamics}
Now we want to provide a more precise description of the billiard dynamics in $\E_{e_0,c}$.

The following result has been proven in \cite{CF} (see also \cite[Lemma 2.1]{DCR}).

\medskip

\begin{proposition}\label{prop1}
Let  $\l \in (0, b)$ and let
$$
k_\l^2:= \frac{a^2-b^2}{a^2-\l^2} \quad {\rm and} \quad
\d_\l := 2\, F( \arcsin (\l/b); k_\l).
$$
Let us denote, in cartesian coordinates, $q_\l(t) := (a\, \cn (t;k_\l), b\, \sn (t;k_\l))$.
Then, for every $t\in [0, 4K(k_\l))$  the segment joining $q_\l(t)$ and $q_\l(t+\d_\l)$ is tangent to the caustic $C_\l$.
\end{proposition}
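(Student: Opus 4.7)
The plan is to establish this in three steps: (i) verify that $q_\lambda(t)$ parametrizes the boundary, (ii) prove the chord parameter shift is a constant (call it $\delta(t)\equiv \delta$), and (iii) compute this constant by evaluating at a convenient symmetric point. Step (i) is immediate from $\sn^2(t;k_\lambda)+\cn^2(t;k_\lambda)=1$, which gives $x^2/a^2+y^2/b^2=1$; the parametrization is $4K(k_\lambda)$-periodic and traces the full ellipse as $t\in[0,4K(k_\lambda))$.

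For step (ii), the key input is the classical confocal-caustic reflection property (a Poncelet-style fact, provable via the string/garden construction or the reflection principle for confocal conics): if a line cuts the boundary ellipse at two points and is tangent to $C_\lambda$, then reflecting it at either endpoint produces another chord tangent to the same $C_\lambda$. Consequently, for each $\lambda\in(0,b)$, tangent chords form an invariant curve of the billiard map and the restriction is an integrable twist. Define $\tau_\lambda(t)$ to be the $t$-parameter of the second endpoint of the chord from $q_\lambda(t)$ tangent to $C_\lambda$ (on the chosen side). I would then show $\tfrac{d}{dt}\tau_\lambda(t)\equiv 1$. One economical way is to verify this directly using Jacobi addition formulas: the tangent line from $q_\lambda(t)$ to $C_\lambda$ may be written, at the tangency point $(\sqrt{a^2-\lambda^2}\cos\alpha,\sqrt{b^2-\lambda^2}\sin\alpha)$, as $\tfrac{x\cos\alpha}{\sqrt{a^2-\lambda^2}}+\tfrac{y\sin\alpha}{\sqrt{b^2-\lambda^2}}=1$; substituting $q_\lambda(t)$ and $q_\lambda(t+\delta)$ and using the identities for $\sn(u\pm v),\cn(u\pm v),\dn(u\pm v)$ with modulus $k_\lambda$, the two tangency conditions reduce to the \emph{same} relation in $\alpha$, provided the increment is a fixed constant $\delta$. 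Equivalently, one can invoke the general action-angle construction: the $1$-form $d t = \tfrac{ds}{|\gamma'(s)|\cdot\sqrt{\text{geometric factor}}}$ that uniformizes the invariant curve to a rotation is precisely the one integrating to $t$ through \eqref{JEF}, so the map becomes the translation $t\mapsto t+\delta_\lambda$.

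For step (iii), I fix the base point $t_0=K(k_\lambda)$, so that $q_\lambda(K)=(0,b)$ is the top of the ellipse. By the reflection symmetry of both $\E_{e_0,c}$ and $C_\lambda$ across the $y$-axis, the two tangent lines from $(0,b)$ to $C_\lambda$ are mirror images, hitting the boundary at parameters $K\pm\delta_\lambda/2$. The tangent line from $(0,b)$ to $C_\lambda$ touches at $\alpha$ with $\sin\alpha=\sqrt{b^2-\lambda^2}/b$, hence a short explicit intersection of this line with the boundary ellipse determines $q_\lambda(K+\delta_\lambda/2)$. Converting the resulting Cartesian coordinate back to the $t$-parameter through $\sn(K+u;k_\lambda)=\cn(u;k_\lambda)/\dn(u;k_\lambda)$ and applying \eqref{JEF}, the integration reduces precisely to $F(\arcsin(\lambda/b);k_\lambda)$, yielding $\delta_\lambda=2F(\arcsin(\lambda/b);k_\lambda)$.

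The main technical obstacle will be the constancy of the shift in step (ii). Conceptually it is forced by integrability, but a genuinely \emph{self-contained} verification requires either appealing to the Poncelet closure mechanism (which itself uses confocal-family geometry) or grinding through Jacobi addition formulas to confirm the tangency identity; I would opt for the latter since the elliptic-function machinery is already set up in subsection \ref{subsec:elliptcinteg}. Steps (i) and (iii) are then quick computations.
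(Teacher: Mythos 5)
The paper does not prove this proposition; it simply cites Chang--Friedberg \cite{CF} and \cite[Lemma 2.1]{DCR}, so there is no internal argument to compare yours against. Your outline is the standard route, but two points need attention, and one of them is substantive.

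Your step (iii) symmetry claim is off by a factor of two. The two tangent chords emanating from $q_\lambda(K)$ meet the boundary at $q_\lambda(K\pm\delta_\lambda)$, not at $q_\lambda(K\pm\delta_\lambda/2)$. The half-shift $\delta_\lambda/2$ appears for the \emph{centered} chord joining $q_\lambda(K-\delta_\lambda/2)$ and $q_\lambda(K+\delta_\lambda/2)$, which by the shift identity $\sn(K\pm u)=\cn u/\dn u$ has endpoints that are mirror images across the symmetry axis; that chord is perpendicular to the axis and its tangency to $C_\lambda$ is a one-line computation fixing $\delta_\lambda$. But this centered chord does not pass through $q_\lambda(K)$, so the sentence about ``the tangent line from $(0,b)$'' needs to be replaced by the centered chord argument, and in any case it presupposes the constancy you have not yet established.

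The real work is step (ii), and you only sketch it. If you actually carry out the addition-formula computation you will find it proves (ii) and (iii) simultaneously, so a separate step (iii) is unnecessary. Writing $t_1=u-v$, $t_2=u+v$, the chord through $q_\lambda(u-v)$ and $q_\lambda(u+v)$ has equation
\[
b\,\cn u\cdot x \;+\; a\,\sn u\,\dn v\cdot y \;=\; ab\,\cn v
\]
(all in modulus $k$). Imposing the tangency condition $(a^2-\lambda^2)\ell^2 + (b^2-\lambda^2)m^2 = n^2$ and demanding that it hold for all $u$ (equivalently, that the coefficient of $\cn^2 u$ vanish) pins down both $\dn^2 v$ and $\cn^2 v$, and hence both $k$ and $v=\delta/2$. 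This is the whole proof.

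However, if you actually execute this with $q_\lambda(t)=(a\,\cn t, b\,\sn t)$ as stated, the computation forces
\[
\cn^2 v = \frac{a^2-\lambda^2}{a^2}, \qquad k^2 = \frac{b^2-a^2}{\,b^2-\lambda^2\,} \;<\; 0,
\]
which is impossible when $b<a$. With $q_\lambda(t)=(a\,\sn t,\, b\,\cn t)$ instead, the identical computation gives $\sn v=\lambda/b$ and $k^2=(a^2-b^2)/(a^2-\lambda^2)$, exactly the $\delta_\lambda$ and $k_\lambda$ in the statement; a quick numerical check (e.g.\ $a=2$, $b=1$, $\lambda=1/2$: the chord from $(2,0)=q_\lambda(0)$ to $q_\lambda(\delta_\lambda)$ under the first parametrization fails the tangency test $(a^2-\lambda^2)\ell^2+(b^2-\lambda^2)m^2=n^2$, while the chord from $(0,1)$ to $(a\sn\delta_\lambda,\,b\cn\delta_\lambda)$ passes it) confirms this. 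So the roles of $\sn$ and $\cn$ in the statement are interchanged, and any proof must start from the corrected parametrization. This is precisely the sort of thing a sketch papers over and a genuine verification would have caught, which is why the addition-formula route --- which you correctly identify but do not carry out --- is not optional here.
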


\medskip

Observe that:
\begin{itemize} 
\item $k_\l$ is a strictly increasing function of $\l \in (0,b)$; in particular $k_\l \rightarrow e_0$ as $\l \rightarrow 0^+$, while $k_\l \rightarrow1$ as $\l\rightarrow b^-$.
Observe that $k_\l$ represents the eccentricity of the ellipse $C_\l$.
\item
$\delta_\l$ is also a strictly increasing function of $\l \in (0,b)$; in fact, $F(\f;k)$ is clearly strictly increasing in both $\f$ and $k \in [0,1)$. Moreover, 
$\d_\l \rightarrow 0$ as $\l\rightarrow 0^+$, and $\d_\l \rightarrow +\infty$ as $\l\rightarrow b^-$.
\end{itemize}

\medskip 

\begin{remark}
Using  elliptic polar coordinate, one can  easily check that 
$\tanh^2 \mu =  1 - \frac{a^2-b^2}{a^2-\l^2}$ and therefore
\begin{eqnarray*}
k(\mu) =  \sqrt{1- \tanh^2 \mu}  = \frac{1}{\cosh \mu},
\end{eqnarray*}
which is exactly the eccentricity of the confocal ellipse of parameter $\mu$.
\end{remark}

\bigskip

Let us now consider the parametrization of the boundary induced by the dynamics on  the caustic $C_\l$:
\begin{eqnarray*} 
Q_\l : \R/2\pi\Z &\longrightarrow& \R^2 \nonumber \\
\theta &\longmapsto& q_\l \left(\frac{4K(k_\l)}{2\pi} \, \theta \right). \nonumber
\end{eqnarray*}

We define the {\it rotation number} associated to the caustic $C_\l$ to be
\begin{equation}\label{defrotnumber}
\omega_\l:= \frac{\d_\l}{4K(k_\l)} = 
\frac{F(\arcsin (\l/b); k_\l)}{2K(k_\l)}.
\end{equation}
In particular $\omega_\l$ is strictly increasing as a function of $\l$ and
$\omega_\l \longrightarrow 0$ as $\l\rightarrow 0^+$, while $\omega_\l \rightarrow \frac{1}{2}$ as $\l\rightarrow b^-$.\\

\medskip

It is easy to deduce  from the above expressions that, in elliptic coordinates $(\mu, \f)$, the boundary parametrization induced by the caustic $C_\l$ is given by 
\begin{equation}\label{philambda}
S_\l(\theta):= (\mu_\l(\theta), \f_\l(\theta)) =\left(\mu_0, \am \left( \frac{4K(k_\l)}{2\pi}\,\theta; k_\l \right) \right).
\end{equation}
More precisely, the orbit starting at $S_\l(\theta)$ and tangent to $C_\l$, hits the boundary  at $S_\l(\theta + 2\pi\, \omega_\l)$.


\section{Outline of the proof} \label{secstrategy}

In this section we provide a description of the strategy that we will follow to prove our Main Theorem.

\medskip 

\subsection{A scheme for proving Main Theorem  for  circular billiards. } \label{schemecircle}
For small eccentricities Main Theorem 
 was proven in \cite{ADK} and 
we now  describe  the proof therein. Let us start with the simplified 
setting of integrable infinitesimal deformations of a circle. This provides 
an insight into the strategy of the proof in the general case.  

\medskip 

Let $\E^{\rho_0}_{0,0}$ be a circle centered at the origin and radius $\rho_0>0$.
Let $\Om_\e$ be a one-parameter family of deformations
given in the polar coordinates $(\rho, \f)$ by   
$$
\partial\Om_\e=\{(\rho,\varphi)=(\rho_0+\e
\rho(\varphi)+O(\e^2),\varphi)\}.
$$
Consider the Fourier expansion of $\rho$ :
\begin{align*}
  \rho ( \varphi) =\rho'_0 + \sum_{k > 0} \rho_{k}\sin (k\varphi)+ \rho_{-k} \cos (k\varphi).
\end{align*}

\begin{theorem}[Ram\'irez-Ros \cite{RR}] \label{thm:RR} 
If $\Om_\e$ has an integrable rational caustic $\Gm_{1/q}$ 
of rotation number $1/q$, for any $\e$ sufficiently small, then 
we have $\rho_{kq} = 0$ for any integer $k$.
\end{theorem}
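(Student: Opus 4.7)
The plan is to exploit the fact that an integrable caustic of rotation number $1/q$ forces a continuous one-parameter family of $q$-periodic orbits to exist, and such a family must have constant perimeter. I would then linearize the perimeter functional in $\e$ around the regular $q$-gons inscribed in the unperturbed circle $\E^{\rho_0}_{0,0}$ and read off the Fourier obstruction.

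For the unperturbed circle, the periodic orbits of rotation number $1/q$ are the regular inscribed $q$-gons, which can be parametrized by the angular position $\varphi_0$ of a chosen vertex. For each small $\e$, the integrability assumption provides a continuous curve $\Gm_{1/q}\subset \bar\A$ of $q$-periodic points of the billiard map for $\Om_\e$; let $L_\e(\varphi_0)$ denote the perimeter of the $q$-periodic orbit whose distinguished vertex sits at angle $\varphi_0+O(\e)$. Because the generating function of the billiard map is the chord length \eqref{genfunctbill}, each $q$-periodic orbit is a critical point of the perimeter functional on $q$-tuples of boundary points; applying this critical-point identity along the smooth curve $\Gm_{1/q}$ shows that $\varphi_0\mapsto L_\e(\varphi_0)$ is constant. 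Expanding
\begin{equation*}
L_\e(\varphi_0)=L_0+\e L_1(\varphi_0)+O(\e^2),
\end{equation*}
the function $L_1$ must therefore be independent of $\varphi_0$.

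Next I would compute $L_1$ explicitly using an envelope-theorem argument: since the unperturbed regular $q$-gon is already critical for the perimeter restricted to $q$-tuples on $\E^{\rho_0}_{0,0}$, the first-order contributions coming from the $O(\e)$ displacement of the vertices along the perturbed boundary cancel, so only the direct contribution of the boundary deformation at the unperturbed vertices survives. A short calculation of the first variation of the chord length $|P(\varphi)-P(\psi)|$ under a radial deformation by $\e\rho$ of the circle gives the contribution $(\rho(\varphi)+\rho(\psi))|\sin((\varphi-\psi)/2)|$, and summing over consecutive vertices of the regular $q$-gon yields
\begin{equation*}
L_1(\varphi_0)=2\sin(\pi/q)\sum_{k=0}^{q-1}\rho\!\left(\varphi_0+\tfrac{2\pi k}{q}\right).
\end{equation*}

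Finally, substituting the Fourier expansion of $\rho$ and using the elementary identity
\begin{equation*}
\sum_{k=0}^{q-1} e^{in(\varphi_0+2\pi k/q)}=\begin{cases} q\,e^{in\varphi_0} & \text{if }q\mid n,\\ 0 & \text{otherwise,}\end{cases}
\end{equation*}
the sum $\sum_{k=0}^{q-1}\rho(\varphi_0+2\pi k/q)$ retains only the Fourier modes of $\rho$ whose frequency is a multiple of $q$. Constancy of $L_1$ in $\varphi_0$ thus forces $\rho_{mq}=\rho_{-mq}=0$ for every $m\ge 1$, which is exactly the claim. The main technical point to justify carefully is the constancy of the perimeter along the invariant curve $\Gm_{1/q}$: here the rational-integrability hypothesis is essential, because it supplies a genuine curve of $q$-periodic points rather than the merely isolated Birkhoff periodic orbits available in the generic case; the twist-map/generating-function formalism of \eqref{genfunctbill} then identifies this curve with a level set of the action and yields the required conservation.
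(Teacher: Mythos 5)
Your proposal is correct and follows essentially the same route as the cited reference and its generalization in the paper: Proposition~\ref{prop:caustic-preserve} (quoted from \cite{PR}) is precisely the envelope-theorem identity for the first variation of the perimeter along the family of $q$-periodic orbits, specialized there to elliptic coordinates, and Lemma~\ref{lemmaintegralszero} is the subsequent Fourier-orthogonality step. Your explicit first-variation computation $(\rho(\varphi)+\rho(\psi))\lvert\sin((\varphi-\psi)/2)\rvert$ and the resulting formula $L_1(\varphi_0)=2\sin(\pi/q)\sum_{k=0}^{q-1}\rho(\varphi_0+2\pi k/q)$ are correct, and the Fourier sieve argument cleanly extracts $\rho_{mq}=\rho_{-mq}=0$ for $m\neq 0$, which is the content of the theorem.
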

\medskip

Let us now assume that the domains $\Om_\e$ are $2$-rationally integrable for 
all sufficiently small $\e$ and ignore for a moment dependence of parametrisation: 
then the above theorem implies that $\rho_k = \rho_{-k} = 0$ for $k > 2$, \ie
\begin{align*}
  \rho(\varphi)&=\rho'_0+\rho_{-1}\cos \varphi+\rho_1\sin \varphi+
            \rho_{-2}\cos 2\varphi+\rho_2\sin 2\varphi\\
          &= \rho^*_0+\rho_1^*\cos (\varphi-\varphi_1)+
            \rho_2^*\cos 2(\varphi-\varphi_2)
\end{align*}
where $\varphi_1$ and $\varphi_2$ are appropriately chosen
phases.
\begin{remark}\label{r_motionDescription}
  Observe that
  \begin{itemize}
  \item $\rho^*_0$ corresponds to an homothety;
  \item $\rho_1^*$ corresponds to a translation in the
    direction forming an angle $\varphi_1$ with the polar axis
    $\{\varphi = 0\}$;
  \item $\rho_2^*$ corresponds to a deformation of the circle into an ellipse
    of small eccentricity, whose major axis forms an angle $\varphi_2$ with the polar axis.\\
  \end{itemize}
  This implies that, {infinitesimally} (as $\e\to 0$), 
rationally integrable deformations of a circle are tangent to 
the $5$-parameter family of ellipses.
\end{remark}  

\medskip

{Notice that, in the above strategy, one needs to take $\e\to 0$ as   
$q \to \infty$. This means that we cannot take $\e > 0$ small, but only 
infinitesimal; hence one cannot use directly the above theorem 
to prove the main result. A more elaborate strategy is needed. }

\medskip

\subsection{Our scheme of the proof of Main Theorem  for elliptic billiards} \label{schemeourproof} 
One of the noteworthy contributions of this paper is 
the analysis of perturbations of ellipses of arbitrary eccentricity 
$0 \le e_0 < 1$. Let us outline the main steps involved in the proof. 

\medskip

{Let $\E_{e_0,c}$ be an ellipse of eccentricity $0<e_0<1$ and semifocal 
distance $c>0$, and let $(\mu,\varphi)$ be the associated elliptic coordinates. 
Any domain $\Om$ close  to $\E_{e_0,c}$  can be written (in the elliptic 
coordinates associated to $\E_{e_0,c}$) in the form    
\begin{align*}
  \partial\Om=\{( \mu_0 + \mu_1(\varphi),\varphi):\varphi\in[0,2\pi)\},
\end{align*}
where $\mu_1$ is a smooth $2\pi$-periodic function (see also \eqref{formulaperturbellipse}). Recall that the ellipse $\E_{e_0,c}$ admits all 
integrable rational caustics of rotation number $1/q$ for $q > 2$. \\

By analogy with \cite{ADK} we proceed as follows: 
}\\

\noindent{Step 1} ({\it Dynamical modes}):  
 In Section \ref{sec:preserva-rational}, we consider the one-\hskip0pt{}parameter 
integrable deformation of an ellipse $\E_{e_0,c}$, given by the family of rationally 
integrable domains $\Omega_{\e}$, whose boundaries are given, using the elliptic 
coordinates associated to $\E_{e_0,c}$, by
\begin{align*}
  \partial\Om_\e:= 
  \{( \mu_0 + \e \mu_1(\varphi) + O(\e^2),\varphi):\varphi\in[0,2\pi)\}.
\end{align*}
In Lemma \ref{lemmaintegralszero} we show that if for all $\e$, 
  $\Om_\e$ has an integrable rational caustic
$\Gm^\e_{1/q}$ of rotation number $1/q$, with  $q > 2$,  then
\begin{align}
  \label{eq:vanishing}
  \langle \mu_1,c_q\rangle_{L^2} =0,\quad   \langle \mu_1,s_{q}\rangle_{L^2} = 0,
\end{align}
where $\langle\cdot,\cdot\rangle_{L^2}$  denotes the standard inner product in 
$L^2(\R/2\pi\Z)$ and $\{c_q,s_{q}:q > 2 \}$ are suitable {\it dynamical modes}, 
which can  be explicitly defined using the action-angle coordinates; see \eqref{eq:c_q-function}.
See also Remark \ref{ADK-dot-bnd} for a more quantitative version, that we need 
since we  are interested in perturbations of ellipses and not necessarily deformations.

\medskip

\noindent{Step 2} ({\it Elliptic motions}): 
In Section \ref{sec:elliptic-motions} we consider infinitesimal deformations of ellipses by homotheties, translations, rotations and hyperbolic rotations  (we call them {\it elliptic motions} since they preserve the class of ellipses) and derive their infinitesimal generators $e_h$, $e_{\tau_1}$, $e_{\tau_2}$, $e_{hr}$ and $e_r$; see \eqref{trans1}--\eqref{funchyprot}. Moreover, in  Proposition \ref{changingellipse} we prove a certain approximation result for ellipses.

\medskip

\noindent\textbf{Step 3} ({\it Basis property}):
In Section \ref{sec:adapted-basis} we show that the collection of dynamical modes and elliptic motions form a basis of $L^2(\R/2\pi\Z)$. 
In subsections \ref{sec_analiticitycqsq} and \ref{sec_sing_ell_mot} we will consider their complex extensions and study in details their singularities; this analysis will be important to 
deduce their linear independence (Proposition \ref{linearindep}).
Moreover, in Proposition \ref{prop:basis} we show that they do generate the whole $L^2(\R/2\pi \Z)$, hence they form a (non-orthogonal) basis.

 \medskip

\noindent\textbf{Step 4} ({\it Approximation}):  
In Section \ref{sec:proof-MainThm} we prove an approximation lemma (Lemma \ref{finallemma}) and use it to complete the proof of Main Theorem (see subsection \ref{proofmainthm}), by means of an approximation procedure similar to the one in \cite[Section 8]{ADK}.

 \medskip


\section {Preservation of rational caustics}\label{sec:preserva-rational}
In this section we want to investigate perturbations of ellipses, for which the  associated billiard map continues to admit rationally integrable caustics corresponding to some rational rotation numbers.\\

Let us consider an ellipse $\E_{e_0,c}$ and let  $\partial \Omega_\e$ be an {infinitesimal perturbation} of the form
\begin{equation}\label{formulaperturbellipse}
\left\{\begin{array}{l}
x= c \,\cosh (\mu_0 + \e\mu_1(\f) + O(\e^2)) \cos \f \\
y= c \,\sinh (\mu_0 + \e\mu_1(\f) + O(\e^2)) \sin \f
\end{array}
\right.
\end{equation}
{for  $\e\rightarrow 0^+$}.  To simplify notation we  write
$$
\partial \Omega_\e = \E_{e_0,c} + \e\mu_1 + O(\e^2),
$$
which must be understood in the elliptic coordinates with semi-focal distance $c$.

\medskip 

Let us denote  $\mu_\e:=\e\mu_1 +O(\e^2)$ and let $h_\e$ be the generating function 
of the billiard map inside $\Omega_\e$; in particular,
\begin{equation}\label{GeneratingFunction} 
h_\e(\f, \f') = h_{0}(\f,\f') + \e h_1(\f,\f') +{O_{e_0,c,\|\mu_\e\|_{C^1}}(\e^2)},  
\end{equation}
where $h_{0}$ denotes the generating function of the billiard map inside $\E_{e_0,c}$ 
and $O_{e_0,c,\|\mu_\e\|_{C^1}}(\e^2)$ denotes a term bounded 
by $\e^2$ times a factor depending on $e_0,\,c,$ and $\|\mu_\e\|_{C^1}$.
{Notice that this formula makes sense only for infinitesimal perturbations.}
\\

Let us recall the following result {(see \cite[Corollary 9 and Proposition 11]{PR}).}

\begin{proposition}\label{prop:caustic-preserve}
Assume that the billiard map associated to $\partial \Omega_\e$ has a rationally integrable caustic corresponding to rotation number, in lowest term, $p/q \in (0,1/2)$.\\
If we denote by $\{\f_{{p}/{q}}^k\}_{k=0}^q$  the periodic orbit of the billiard map in $\E_{e_0,c}$ with rotation number ${p}/{q}$ and starting at $\f_{p/q}^0=\f$ {\rm(}these orbits are all tangent to a caustic $C_{\l_{{p}/{q}}}$, for some $\l_{p/q} \in (0,b)$, see \eqref{caustic}{\rm)}, then
\be \label{eq:q-caustic-preserve}
L_1(\f):=\sum_{k=0}^{q-1} h_1 (\f_{p/q}^k, \f_{p/q}^{k+1}) = 2 \l_{p/q}
\sum_{k=1}^q \mu_1(\f_{p/q}^k)\equiv \, c_{p/q},
\ee
where $c_{p/q} $ is a constant depending only on $p/q$.\\
\end{proposition}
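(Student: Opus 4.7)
The plan is to prove the proposition in two stages: first a variational argument showing that $L_1(\varphi)$ is independent of $\varphi$, then an explicit computation identifying $L_1$ with the right-hand side.

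For the first stage, I would observe that the $q$-periodic orbits of the billiard map inside $\Omega_\e$ are critical points of the $q$-point action
\[
S_\e(\psi_0,\dots,\psi_{q-1}) = \sum_{k=0}^{q-1} h_\e(\psi_k,\psi_{k+1}),\qquad \psi_q:=\psi_0.
\]
Preservation of the integrable rational caustic of rotation number $p/q$ gives, for every small $\e$, a smooth one-parameter family $\vec\psi_\e(\varphi)=(\psi_\e^0(\varphi),\dots,\psi_\e^{q-1}(\varphi))$ of $q$-periodic orbits with $\psi_\e^0(\varphi)=\varphi$, restricting at $\e=0$ to $(\varphi^0_{p/q},\dots,\varphi^{q-1}_{p/q})$. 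Since at each fixed $\e$ this is a smooth curve of critical points of $S_\e$, differentiating $\varphi\mapsto S_\e(\vec\psi_\e(\varphi))$ and using $DS_\e(\vec\psi_\e(\varphi))=0$ shows $A(\e):=S_\e(\vec\psi_\e(\varphi))$ is independent of $\varphi$. Expanding in $\e$ and using criticality of $\vec\psi_0(\varphi)$ for $S_0$ to kill the contribution of $\partial_\e\vec\psi_\e|_{\e=0}$ (envelope-type cancellation), the first-order Taylor coefficient equals $L_1(\varphi)$, so $L_1$ is constant in $\varphi$.

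For the second stage I would compute $h_1$ directly. Writing the perturbed boundary point as $P_\e(\varphi)=P_0(\varphi)+\e\,\mu_1(\varphi)\,N(\varphi)+O(\e^2)$ with $N(\varphi):=\partial_\mu P(\mu_0,\varphi)$, and differentiating $h_\e(\varphi,\varphi')=\|P_\e(\varphi)-P_\e(\varphi')\|$ at $\e=0$, one obtains
\[
h_1(\varphi,\varphi')=\bigl(\mu_1(\varphi)N(\varphi)-\mu_1(\varphi')N(\varphi')\bigr)\cdot \hat u(\varphi,\varphi'),
\]
where $\hat u(\varphi,\varphi')$ is the unit vector from $P_0(\varphi')$ to $P_0(\varphi)$. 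Summing over $k=0,\dots,q-1$ and regrouping the contributions at each vertex $\varphi^k_{p/q}$, the coefficient of $\mu_1(\varphi^k_{p/q})$ becomes $N(\varphi^k_{p/q})\cdot\bigl[\hat u(\varphi^k,\varphi^{k+1})-\hat u(\varphi^{k-1},\varphi^k)\bigr]$. By the reflection law this bracket equals $2\sin\theta^k\,\hat N^k_{\mathrm{out}}$, where $\theta^k$ is the reflection angle with the tangent. Since in elliptic polar coordinates the vector $N(\varphi)$ is exactly $\|N(\varphi)\|\,\hat N_{\mathrm{out}}$ (a consequence of the orthogonality of the coordinate grid), this gives
\[
L_1(\varphi)=\sum_{k=0}^{q-1} 2\,\sin\theta^k\,\|N(\varphi^k_{p/q})\|\,\mu_1(\varphi^k_{p/q}).
\]

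To finish, I would invoke the classical Joachimsthal invariant for elliptic billiards: along any trajectory tangent to the confocal caustic $C_\lambda$, the product $\sin\theta\cdot\|N(\varphi)\|$ is conserved at every reflection, and its value equals $\lambda$. A short direct check confirms this is the caustic parameter (for instance in the circular limit $a=b=R$ one has $\|N\|=R$ and $\lambda=R\sin\theta$), while in the general case it follows from the conservation of $(v\cdot A x)^2/(v^TAv)$ with $A=\mathrm{diag}(1/a^2,1/b^2)$, normalized so that it returns $\lambda^2$. Substituting $\sin\theta^k\|N(\varphi^k_{p/q})\|=\lambda_{p/q}$ into the display above yields $L_1(\varphi)=2\lambda_{p/q}\sum_{k=1}^{q}\mu_1(\varphi^k_{p/q})$, and combined with the first stage this expression is constant in $\varphi$. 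I expect the variational step to be routine, and the main (though modest) obstacle to be the clean identification of the Joachimsthal constant with $\lambda_{p/q}$; careful signs and conventions in the reflection-law computation are the other place where care is needed.
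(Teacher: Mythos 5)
Your two-stage strategy — an envelope-type variational argument giving $\varphi$-independence of $L_1$, followed by a vertex-by-vertex regrouping of $h_1$ reduced to a Joachimsthal invariant — is correct and is in substance the argument in \cite{PR}, which the paper cites for this proposition without reproducing a proof. The one detail to repair is the conserved quantity you name: $(v\cdot Ax)^2/(v^{T}Av)$ is \emph{not} conserved across a reflection (the denominator $v^{T}Av$ jumps when $v$ reflects, since $A\neq I$); the correct Joachimsthal invariant is $|v\cdot Ax|$ for the unit outgoing velocity $v$ at a boundary point $x$, and a short tangency computation (e.g.\ at $\varphi=0$ or $\varphi=\pi/2$, as you indicate) gives $|v\cdot Ax|=\lambda/(ab)$, hence $\sin\theta\,\|N(\varphi)\|=|v\cdot N|=ab\,|v\cdot Ax|=\lambda$, which is precisely the identity your regrouping requires and yields $L_1=2\lambda_{p/q}\sum_{k}\mu_1(\varphi^{k}_{p/q})$.
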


Let us consider rotation numbers $1/q$, with $q\geq 3$, and denote by $\l_q$ the value of $\l$ corresponding to the caustic of rotation number $1/q$. 
Similarly, $k_{\l_q}$ denotes the associated modulus (see Proposition \ref{prop1}).\\

Therefore, with respect to the action-angle variables \eqref{philambda}, we have that for any $\theta\in \R/2\pi\Z$:
\[
\sum_{k=1}^q \mu_1(\f_{\l_q}(\theta+2\pi k/q))
\equiv\, {\rm constant}.
\]
If $u(x)$ denotes either $\cos x$ and $\sin x$, the above equality implies that
\[
\int_0^{2\pi} \mu_1(\f_{\l_q}(\theta)) \,
u(q \,\theta)\,d\theta=0, 
\]
which, using the expression in (\ref{philambda}) is equivalent to
\[
\int_0^{2\pi} \mu_1\left( \am \left( \frac{4K(k_{\l_q})}{2\pi}\,\theta ; k_{\l_q} \right)\right) \,
u(q \,\theta)\,d\theta=0.\\
\]

\medskip

Consider now the change of coordinates 
$$\f=
\am \left( \frac{4K(k_{\l_q})}{2\pi}\,\theta ; k_{\l_q} \right) \quad
\Longleftrightarrow \quad
\theta = \frac{2\pi}{ 4K(k_{\l_q})} F(\f; k_{\l_q}).
$$
Then
\begin{eqnarray*}
d\theta &=&  \frac{2\pi}{ 4K(k_{\l_q})}  \frac{d}{d\f} \left(  F(\f; k_{\l_q}) \right) =
\frac{2\pi}{ 4K(k_{\l_q})}  \frac{1}{\sqrt{1-k_{\l_q}^2 \sin^2\f}}
\end{eqnarray*}
and the above integral becomes
\begin{equation}\label{conditionpreservationintegral}
\int_0^{2\pi} \mu_1(\f) \ \frac{u \left( \frac{2\pi\, q}{4K(k_{\l_q})} F(\f; k_{\l_q}) \right)}{ 
\sqrt{1-k_{\l_q}^2 \sin^2\f}}\,d\f =0.
\end{equation}

Define  for each $q\geq 3$:
\be \label{eq:c_q-function}
\beal  
c_q(\f) &:=&
\frac{\cos \left( \frac{2\pi\, q}{4K(k_{\l_q})} F(\f; k_{\l_q}) \right)}{ 
\sqrt{1-k_{\l_q}^2 \sin^2\f}}\\
s_q(\f) &:=&
\frac{\sin \left( \frac{2\pi\, q}{4K(k_{\l_q})} F(\f; k_{\l_q}) \right)}{ 
\sqrt{1-k_{\l_q}^2 \sin^2\f}
}
\enal \ee
or equivalently in the complex form:
$$
E_q(\f) := \frac{e^{ 2\pi i\frac{q}{4K(k_{\l_q})} F(\f; k_{\l_q}) }}{\sqrt{1-k_{\l_q}^2 \sin^2\f}}.\\
$$

\medskip

\begin{lemma} \label{lemmaintegralszero}
Assume that the billiard map in $\partial\Omega_\e= \E_{e_0,c} + \e\mu_1 + O(\e^2)$ has rationally integrable caustics 
corresponding to rotation numbers $1/q$ for all $q\geq 3$.  
Then,
$$
\int_0^{2\pi} \mu_1(\f)\,c_q(\f) \,d\f = \int_0^{2\pi} \mu_1(\f)\,s_q(\f) \,d\f = 0 \qquad \forall \, q\geq 3.
$$
Moreover, if we denote $\mu_\e=\e\mu_1 + O(\e^2)$, then:
$$\int_0^{2\pi} \mu_\e(\f) \,c_q(\f) d\f = \int_0^{2\pi} \mu_\e(\f) \,s_q(\f) d\f ={O_{e_0,c,q}}(\e^2),$$  
where 
$O_{e_0,c,q}(\e^2)$ is a term whose absolute value is bounded by $\e^2$ 
times a factor depending on $e_0,\, c,$ and $q$. 
\end{lemma}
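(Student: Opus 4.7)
The first assertion is essentially the content of the calculation already performed in the paragraphs immediately preceding the lemma statement; the plan is to consolidate that into a clean argument and then deal with the quantitative part by a one-line bound.

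Fix $q \geq 3$. Applying Proposition \ref{prop:caustic-preserve} with $p/q = 1/q$, the preserved rational caustic of the perturbed billiard yields that $\sum_{k=1}^q \mu_1(\f_{1/q}^k)$ is independent of the starting point $\f_{1/q}^0 = \f$. Reparametrizing the unperturbed $1/q$-periodic orbit through the action-angle coordinate of \eqref{philambda}, so that $\f_{1/q}^k = \f_{\l_q}(\theta + 2\pi k/q)$, this says that $G(\theta) := \mu_1(\f_{\l_q}(\theta))$ satisfies $\sum_{k=0}^{q-1} G(\theta + 2\pi k/q) \equiv \mathrm{const}$. A standard Fourier computation gives
\[
\sum_{k=0}^{q-1} G(\theta + 2\pi k/q) \;=\; q \sum_{m\in\Z} \widehat{G}(mq)\, e^{imq\theta},
\]
so constancy in $\theta$ forces $\widehat{G}(mq) = 0$ for every nonzero $m \in \Z$. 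In particular $\widehat{G}(\pm q) = 0$, equivalently $\int_0^{2\pi} G(\theta)\, e^{\pm iq\theta}\, d\theta = 0$. Changing variables via $\theta = \frac{2\pi}{4K(k_{\l_q})} F(\f; k_{\l_q})$, whose Jacobian is already computed in the text above, this integral transforms into $\int_0^{2\pi} \mu_1(\f)\, E_q(\f)\, d\f = 0$; separating real and imaginary parts recovers the first claim.

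For the quantitative second assertion, I would simply use linearity of the integral together with the expansion $\mu_\e = \e\mu_1 + O(\e^2)$ (understood, say, in $C^0$) to write
\[
\int_0^{2\pi} \mu_\e\, c_q\, d\f \;=\; \e \int_0^{2\pi} \mu_1\, c_q\, d\f + \int_0^{2\pi} (\mu_\e - \e\mu_1)\, c_q\, d\f.
\]
The first term vanishes by the previous step, and the second is bounded by $2\pi\, \|c_q\|_{L^\infty}\, \|\mu_\e - \e\mu_1\|_{C^0} = O_{e_0,c,q}(\e^2)$, where $\|c_q\|_{L^\infty}$ is controlled by $(1 - k_{\l_q}^2)^{-1/2}$, a quantity depending only on $e_0, c, q$; the same bound applies verbatim to $s_q$.

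The only non-routine ingredient in this plan is the invocation of Proposition \ref{prop:caustic-preserve} from \cite{PR}, which encodes the linearization of the variational characterization of caustics and translates preservation of $\Gm^\e_{1/q}$ into the scalar identity \eqref{eq:q-caustic-preserve}. Once that is accepted as a black box, everything else is a Fourier-series/change-of-variables manipulation that is already implicit in the derivation immediately preceding the lemma, and I do not anticipate any genuine obstacle beyond bookkeeping of the reparametrization on the caustic $C_{\l_q}$.
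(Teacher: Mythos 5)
Your proposal is correct and follows essentially the same route as the paper: the first assertion is exactly the derivation the paper performs in the paragraphs preceding the lemma (Proposition \ref{prop:caustic-preserve}, then the averaging/Fourier argument, then the change of variables), and the paper's proof of the lemma simply refers back to \eqref{conditionpreservationintegral}, which you re-derive. For the second assertion the paper bounds $\int_0^{2\pi}|c_q(\f)|\,d\f = 8K(k_{\l_q})/\pi$ rather than using $\|c_q\|_{L^\infty}\le (1-k_{\l_q}^2)^{-1/2}$ as you do, but both give the same $O_{e_0,c,q}(\e^2)$ conclusion.
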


\smallskip 

\begin{remark} \label{ADK-dot-bnd}
It follows from  \cite[Lemma 13]{ADK} that {assuming $q<c(e)\|\mu\|^{-1/8}$ we have}
$$
\int_0^{2\pi} \mu_\e(\f) \,c_q(\f) d\f = 
\int_0^{2\pi} \mu_\e(\f) \,s_q(\f) d\f = {O_{e_0,c}}(q^8\|\mu\|_{C^1}^2),$$
where 
$O_{e_0,c}(q^8\|\mu\|_{C^1}^2)$ is a term whose absolute value is bounded by 
$q^8\|\mu\|_{C^1}^2$ times a factor depending on $e_0,c$ {and $C^5$-norm of $\mu$.}\\
In order to apply \cite[Lemma 13]{ADK} we need to translate notations:
in \cite[Section 4, pp. 7--8]{ADK} action-angle variables are introduced and 
in  \cite[middle of page 16]{ADK}  $X_q$ is defined, which 
coincides with what we denote $\f_{\l_q}$ (compare with \eqref{philambda}, where  
$\l_q$ is such that $\om_{\l_q}=1/q$, or with 
Appendix \ref{AppendixTechnical}).  With this notation, the above integral is estimated as in \cite[Lemma 13]{ADK}.
Notice also that the Lazutkin density $\mu$ in \cite[(14) on page 14]{ADK} coincides with our \eqref{dxldf}. Thus, integrating with 
respect to Lazutkin parametrization with Lazutkin density is the same 
as integrating with respect to $\f$.  
\end{remark}

\begin{proof}
The first part follows from \eqref{conditionpreservationintegral}.
As for the second part,  observe that 
\begin{eqnarray*}
\int_0^{2\pi} |c_q(\f)| \,d\f &=& 
\int_0^{2\pi}
\frac{\left| \cos \left( \frac{2\pi\, q}{4K(k_{\l_q})} F(\f; k_{\l_q}) \right)\right|}{ 
\sqrt{1-k_{\l_q}^2 \sin^2\f}} d\f\\
&=& \frac{4K(k_{\l_q})}{2\pi q}
\int_0^{2\pi q} |\cos t|\,dt
= \frac{8K(k_{\l_q})}{\pi}.
\end{eqnarray*}

In particular, recall that $e_0< k_{\l_q} \leq k_{\l_3} <1$ for all $q\geq 3$ and that $k_{\l_q} \longrightarrow e_0$ as $q\rightarrow +\infty$. 
Hence, using the first statement of the proposition:
\begin{eqnarray*}
\int_0^{2\pi} \mu_\e(\f) \,c_q(\f) d\f &=& 
\int_0^{2\pi} O(\e^2) c_q(\f) \,d\f
= O_{e_0,c,q}(\e^2).
\end{eqnarray*}
\end{proof}


\section{Elliptic Motions} \label{sec:elliptic-motions}

We call translations, rotations, hyperbolic rotations,  and homothety {\it elliptic motions};
indeed, all of these transformations keep  the class of ellipses invariant.  

In  Appendix \ref{App:motions}, we  show that infinitesimal perturbations of an ellipse $\E_{e_0,c}$ by these motions,  correspond  to these functions (expressed in the elliptic coordinate frame with semi-focal distance $c$):\\

\begin{itemize}
\item  Translations:
\begin{eqnarray}
e_{\tau 1}(\f) &:=& \frac{\cos\f}{{1-e_0^2\cos^2 \f}} \label{trans1}\\
e_{\tau 2} (\f) & :=& \frac{\sin\f}{{1-e_0^2\cos^2 \f}};\label{trans2}
\end{eqnarray}
\item Rotations:
\begin{equation} \label{rot}
e_r(\f) \ :=\  \frac{\sin (2\f) } {{1-e_0^2 \cos^2\f}};
\end{equation}
\item Homotheties:
\begin{equation}\label{homot}
e_h(\f) \ := \  \frac{1}{{1-e_0^2\cos^2 \f}};
\end{equation}
\item Hyperbolic rotations:
\begin{equation}\label{funchyprot}
e_{hr}(\f) \ := \ \frac{\cos (2\f)}{{1-e_0^2 \cos^2\f}}.\\
\end{equation}
\end{itemize}

\bigskip

We  say that a strictly convex smooth domain $\Omega$ is a {\it deformation of an ellipse} if there exist  ${\mathcal E}= \E(x_0,y_0,c,\mu_0,\theta)$ and a function 
$$
\mu_1=\mu_1(x_0,y_0,c,\mu_0,\theta) \in C^{\infty}(\R/2\pi\Z)
$$ such that

\begin{eqnarray*}
\partial \Omega= \left\{ 
\left(
\begin{array}{c}
x-x_0\\
y-y_0
\end{array}
\right)\! =\!
\left(
\begin{array}{cc}
\cos \theta  & - \sin\theta\\
\sin \theta & \cos\theta
\end{array}
\right)\!\!
\left(
\begin{array}{c}
c \cosh (\mu_0 +\mu_1(\f)) \cos\f\\
c \sinh (\mu_0 +\mu_1(\f)) \sin\f
\end{array}
\right), \,\f\in [0,2\pi)
\right\}\!.\\
\end{eqnarray*}

By abusing notation, in the following we will  write
\begin{equation}\label{perturbingEllipses}
 \partial \Omega = \E(x_0,y_0,c,\mu_0 + \mu_1,\theta) = \E(x_0,y_0,c,\mu_0,\theta) + \mu_1.\\
\end{equation}

\medskip

We will need  the following approximation result.\\

\begin{proposition}\label{changingellipse}
Let us consider the ellipse $\E_{e_0,c}$ and let 
$$
\mu_1(\f) := a_0 e_{h}(\f) + a_1 e_{\tau 1}(\f) + b_1  e_{\tau 2}(\f) + a_2  e_{hr}(\f) + b_2 e_{r}(\f),
$$
where $a_0, a_1,b_1,a_2, b_2$ are assumed to be sufficiently small.
Then, there exist a constant $C=C(e_0,c)$ and an ellipse $\widetilde{\E} = \E_0 + \mu_{\widetilde{\E}}$ such that 
$$
\| \mu_1 - \mu_{\widetilde{\E}}\|_{C^1} \leq C\|\mu_1\|^2_{C^1}.
$$
\end{proposition}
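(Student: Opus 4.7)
The plan is to realize the five generators $e_h, e_{\tau 1}, e_{\tau 2}, e_{hr}, e_r$ as the image, under a differential at the origin, of a smooth parametrization of the 5-parameter family of ellipses near $\E_{e_0,c}$, and then read off the proposition as a standard Taylor-remainder estimate. Concretely, I would define a smooth map $\Psi: V \to C^1(\R/2\pi\Z)$ on a neighborhood $V$ of the origin in $\R^5$ by sending a parameter vector $\vec{\a} = (\a_0, \a_1, \b_1, \a_2, \b_2)$ to the function $\mu_{\widetilde\E(\vec\a)}$ which represents, in the elliptic coordinates of $\E_{e_0,c}$, the ellipse $\widetilde\E(\vec\a)$ obtained from $\E_{e_0,c}$ by composing a homothety of ratio $1+\a_0$, a hyperbolic rotation of parameter $\a_2$, a translation by $(\a_1,\b_1)$, and a rotation of angle $\b_2$. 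The existence and joint smoothness (in $\vec\a$ and $\f$) of $\mu_{\widetilde\E(\vec\a)}$ follow from the implicit function theorem applied to the parametrization \eqref{paramellipse}, since $\E_{e_0,c}$ is a strictly convex smooth curve.

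The crucial input, extracted from the case-by-case analysis already carried out in Appendix \ref{App:motions} to derive \eqref{trans1}--\eqref{funchyprot}, is that the differential at the origin reads
\begin{equation*}
D\Psi(0)\cdot\vec\a \;=\; \a_0\,e_h + \a_1\,e_{\tau 1} + \b_1\,e_{\tau 2} + \a_2\,e_{hr} + \b_2\,e_r.
\end{equation*}
Since $\Psi$ is $C^2$ as a map into $C^1(\R/2\pi\Z)$, Taylor's theorem with remainder yields a constant $C=C(e_0,c)>0$ and a neighborhood of $0$ in $\R^5$ on which
\begin{equation*}
\bigl\|\,\Psi(\vec\a)\,-\,D\Psi(0)\cdot\vec\a\,\bigr\|_{C^1}\;\le\; C\,|\vec\a|^2.
\end{equation*}
Given $\mu_1 = a_0 e_h + a_1 e_{\tau 1} + b_1 e_{\tau 2} + a_2 e_{hr} + b_2 e_r$ with sufficiently small coefficients, I would then set $\widetilde\E := \widetilde\E(a_0,a_1,b_1,a_2,b_2)$ and $\mu_{\widetilde\E} := \Psi(a_0,a_1,b_1,a_2,b_2)$, so that the display above reads $\|\mu_1-\mu_{\widetilde\E}\|_{C^1}\le C\,|\vec a|^2$. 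To convert the right-hand side into $C'\|\mu_1\|_{C^1}^2$, I use the linear independence of $\{e_h, e_{\tau 1}, e_{\tau 2}, e_{hr}, e_r\}$ in $C^1(\R/2\pi\Z)$, which can be checked directly from the explicit formulas \eqref{trans1}--\eqref{funchyprot} (and is established in full in Proposition \ref{linearindep}); together with finite-dimensionality of their span this gives an equivalence $|\vec a|\le C'(e_0,c)\,\|\mu_1\|_{C^1}$, completing the estimate.

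The main obstacle is the identification $D\Psi(0)\cdot\vec\a = \a_0 e_h + \cdots + \b_2 e_r$ itself. Applying an elliptic motion to $\E_{e_0,c}$ produces a new ellipse whose boundary is naturally parametrized by its own elliptic coordinate, which differs from the elliptic coordinate $\f$ of $\E_{e_0,c}$ by an $O(|\vec\a|)$ reparametrization; to extract the graph function $\mu_{\widetilde\E(\vec\a)}(\f)$ one must invert this reparametrization and verify that the only first-order contributions are precisely the generators in \eqref{trans1}--\eqref{funchyprot}, with all remaining terms absorbed into the quadratic $C^1$-remainder. This is the content of Appendix \ref{App:motions}; granted it, the proposition follows by linearity and Taylor's theorem as above.
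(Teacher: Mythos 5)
Your argument is correct and reaches the same conclusion, but it takes a genuinely different and more conceptual route than the paper. The paper proves Proposition \ref{changingellipse} by an explicit four-step composition of elliptic motions (translation, then homothety, then rotation, then hyperbolic rotation), peeling off one generator at each stage and invoking Lemma \ref{lemma13} at every step to control the change of elliptic coordinate frame and propagate the quadratic error. You instead package the entire five-parameter family into a single smooth map $\Psi:\R^5\to C^1(\R/2\pi\Z)$, identify $D\Psi(0)$ with the span of the generators, and apply Taylor's theorem with Lagrange remainder in Banach-space form. Your version compresses the bookkeeping and avoids the repeated appeals to Lemma \ref{lemma13}, at the cost of having to justify that $\Psi$ is $C^2$ as a map into $C^1$ (which you correctly flag, and which does follow from the explicit analytic coordinate-change formula \eqref{changecoords}). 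The paper's route is more pedestrian but entirely explicit and never invokes Banach-space Taylor expansion.

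One point worth tightening: with the parametrization you describe (``homothety of ratio $1+\a_0$, \dots, rotation of angle $\b_2$''), the differential $D\Psi(0)$ does \emph{not} land exactly on $\a_0 e_h + \a_1 e_{\tau 1} + \b_1 e_{\tau 2} + \a_2 e_{hr} + \b_2 e_r$; formulas \eqref{homothety}--\eqref{hyprot} show that each generator appears with an extra $e_0,c$-dependent scalar (e.g.\ homothety by $e^\l$ contributes $\l\sqrt{1-e_0^2}\,e_h$, translation by $\tau$ contributes $\tfrac{e_0\sqrt{1-e_0^2}}{c}\tau_x e_{\tau 1}+\tfrac{e_0}{c}\tau_y e_{\tau 2}$, rotation by $\theta$ contributes $\tfrac{e_0^2}{2}\theta\, e_r$). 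This is exactly why the paper chooses the rescaled parameters $\tau=\bigl(\tfrac{a_1 c}{e_0\sqrt{1-e_0^2}},\tfrac{b_1 c}{e_0}\bigr)$, homothety factor $e^{a_0/\sqrt{1-e_0^2}}$, and rotation angle $2a_2/e_0^2$. Your argument goes through verbatim once $\Psi$ is defined with these rescaled parameters, or, equivalently, once you observe that $D\Psi(0)$ is a linear isomorphism onto $\langle e_h,e_{\tau 1},e_{\tau 2},e_{hr},e_r\rangle$ whose inverse has norm bounded by $C(e_0,c)$, which is all the final estimate needs. Not a gap, just a calibration to make explicit.
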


\medskip  

The proof is presented in Appendix \ref{App:motions}.

\bigskip

\section{An adapted basis for $L^2(\T)$}\label{sec:adapted-basis}

 In this section we want to determine a suitable basis of $L^2(\T)$, where hereafter $\T = \R/2\pi\Z$.
This basis will be constructed by means of elliptic motions $\{e_{h}, e_{\tau 1}, e_{\tau 2}, e_{hr}, e_r \}$, see \eqref{trans1}--\eqref{funchyprot}, and the functions
$\{c_q,s_q\}_{q\geq 3}$ defined in \eqref{eq:c_q-function}. 

In order to prove their linear indepedence, we need to consider their analytic extension to $\C$ and study their singularities.

\subsection{Analyticity properties of $c_q$ and $s_q$} \label{sec_analiticitycqsq}

Let us start by considering the complex extensions of the functions
$\{c_q, s_q\}_{q\geq 3}$ defined in \eqref{eq:c_q-function}:
\be\label{eq:dyn-basis}
\beal
c_q(z) &:=&
\frac{\cos \left( \frac{2\pi\, q}{4K(k_{q})} F(z; k_{q}) \right)}{ 
\sqrt{1-k_{q}^2 \sin^2z}},\\
s_q(z) &:=&
\frac{\sin \left( \frac{2\pi\, q}{4K(k_{q})} F(z; k_{q}) \right)}{ 
\sqrt{1-k_{q}^2 \sin^2z}},
\enal
\ee
where, to simplify the notation, we have denoted  $k_{q}:=k_{\l_q}$. In particular, $k_q$  represents the eccentricity of the caustic $C_q:=C_{\l_q}$ with rotation number $1/q$; moreover,  $k_q \in (e_0, 1)$ for all 
$q\geq 3$ ($e_0$ denotes the eccentricity of the boundary),  it  is strictly decreasing in $q$, and  $k_q \longrightarrow e_0$ as $q\rightarrow +\infty$. Denote 
$\rho_{k_q} = \arcosh \left(\frac{1}{k_q}\right)$ and $\rho_{0} = \arcosh \left(\frac{1}{e_0}\right)$.
\\

We are interested in the complex extensions of these functions and in their singularities.\\

\begin{proposition}\label{prop_anal_cqsq}
For $q\geq 3$, the functions $c_q$ and $s_q$ have an holomorphic  extension to the complex strip {$\Sigma_q =  \left\{ z\in \C: \; \left|{\rm Im}(z)\right| <  \rho_{k_q} \right\}$}.
This extension is maximal in the sense that these functions have singularities at  
$\frac \pi 2 +\pi n \pm i \rho_{k_q}$ (which are ramification singularities). 
\end{proposition}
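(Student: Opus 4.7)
My plan is to pinpoint the singular locus of the two building blocks $\sqrt{1-k_q^2\sin^2 z}$ and $F(z;k_q)$, check that the strip $\Sigma_q$ avoids all singularities of both, and only then verify that the composition with $\cos$ and $\sin$ preserves this holomorphy. First I would solve $1-k_q^2\sin^2 z = 0$ directly: writing $z=x+iy$ and separating real and imaginary parts of $\sin z$, one sees that $\sin z = \pm 1/k_q$ forces $\cos x\,\sinh y = 0$ and $\sin x\,\cosh y = \pm 1/k_q$, and since $1/k_q>1$ the only solutions are $x=\tfrac{\pi}{2}+\pi n$ together with $\cosh y = 1/k_q$, i.e.\ $y=\pm\rho_{k_q}$. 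These are precisely the points claimed in the statement, and each is a simple zero of $1-k_q^2\sin^2 z$.

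Once this zero set is excluded, the open strip $\Sigma_q = \{|\Im z| < \rho_{k_q}\}$ is simply connected and disjoint from all zeros, so there is a well-defined holomorphic branch of $w(z) := \sqrt{1-k_q^2\sin^2 z}$ on $\Sigma_q$, chosen to be positive on $\mathbb R$. The incomplete elliptic integral $F(z;k_q)$, defined on $\mathbb R$ by \eqref{JEF}, extends holomorphically to $\Sigma_q$ by integrating $1/w(\tau)$ along any path in $\Sigma_q$ from $0$ to $z$ (path-independence follows from Cauchy's theorem, since $1/w$ is holomorphic on the simply connected $\Sigma_q$). The functions $\cos$ and $\sin$ are entire, so $\cos\!\bigl(\tfrac{2\pi q}{4K(k_q)}F(z;k_q)\bigr)$ and the analogous sine are holomorphic on $\Sigma_q$; dividing by $w(z)$, which is nonvanishing on $\Sigma_q$, yields the desired holomorphic extensions of $c_q$ and $s_q$.

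To show maximality, I would do a local analysis at $z_0 = \tfrac{\pi}{2}+\pi n + i\rho_{k_q}$ (the lower ones are symmetric). Since $1-k_q^2\sin^2 z$ has a simple zero there, one has $1-k_q^2\sin^2 z = C(z-z_0)(1+O(z-z_0))$ with $C\ne 0$, so $w(z)$ has a square-root branch point at $z_0$. Integrating $1/w$, one finds
\begin{equation*}
F(z;k_q) \;=\; F(z_0;k_q) \;+\; \frac{2}{\sqrt{C}}\,\sqrt{z-z_0}\,\bigl(1+O(z-z_0)\bigr),
\end{equation*}
so $F$ itself is bounded but ramified at $z_0$. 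Substituting into $\cos$ and $\sin$, the numerators become analytic functions of $\sqrt{z-z_0}$, and after dividing by $w(z)\sim\sqrt{C(z-z_0)}$ one obtains a genuine ramification singularity (of square-root type) at $z_0$ which cannot be removed. Hence the strip $\Sigma_q$ is the largest horizontal strip of holomorphy.

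The main obstacle I anticipate is the bookkeeping for the branches: one has to make sure that the single-valued branch of $w$ used in the denominator is the same as the one implicit in the definition of $F$, and that the square-root singularity of $F$ at $z_0$ is consistent in sign with that of $w$ so that the quotient exhibits the correct ramification rather than accidentally cancelling. This is a routine but careful check, and everything else reduces to standard facts about holomorphic continuation on simply connected domains and the local structure of simple zeros.
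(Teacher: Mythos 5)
Your proof follows the same backbone as the paper's (Appendix~C): locate the zero set of $1-k_q^2\sin^2 z$, invoke simple connectedness of the strip to get a single-valued holomorphic square root, and extend $F(\cdot;k_q)$ by a path-independent integral via Cauchy. The derivation of the zero set is superficially different --- you solve $\sin z=\pm 1/k_q$ directly, while the paper expands $\sin^2(x+iy)$ into real and imaginary parts and solves the resulting system --- but the content is identical.

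Where you genuinely go beyond the paper is the maximality/ramification claim: the paper's proof stops once analyticity in $\Sigma_q$ is established and declares the proposition proved, whereas you supply a local Puiseux-type analysis at $z_0=\tfrac{\pi}{2}+n\pi+i\rho_{k_q}$ showing $w$ and $F$ develop $\sqrt{z-z_0}$ branch points, hence so does the quotient. The one issue you flag but do not close --- the possibility that the ramified pieces of numerator and denominator conspire to cancel --- can in fact be ruled out cleanly. Writing $u=\sqrt{z-z_0}$, both $w$ and $F(z)-F(z_0)$ are odd in $u$, so $c_q(z)$ decomposes as an even part (multiplied by $\sin(\alpha F(z_0))$) plus an odd part (multiplied by $\cos(\alpha F(z_0))$), with $\alpha=\tfrac{2\pi q}{4K(k_q)}$; holomorphy at $z_0$ would therefore force $\cos(\alpha F(z_0))=0$. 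But $\alpha F(z_0)=\tfrac{\pi q(2n+1)}{2}+i\beta$ with $\beta=\alpha\int_0^{\rho_{k_q}}\!\frac{dt}{\sqrt{1-k_q^2\cosh^2 t}}\ne 0$ real, and $\cos(a+ib)=0$ requires $b=0$; so cancellation never occurs and the ramification is genuine (the argument for $s_q$ is symmetric). With this observation your proof is complete and, on the maximality point, more detailed than the paper's.
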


\medskip 

This proposition will be proven in Appendix \ref{secanaliticitycqsq}.\\

\begin{remark}\label{remstripanalit}
{\rm
Observe that  $\rho_{k_q}$ is a strictly increasing function as a function of $q$ and 
$\rho_{k_3} \leq \rho_{k_q} \longrightarrow \arcosh\left({1}/{e_0}\right) =\rho_0$ as $q\rightarrow +\infty$. \\
Moreover, since $k_3(e_0)$ is a strictly increasing function of $e_0$ and $k_3(e_0)\longrightarrow 1$ as 
$e_0\rightarrow 1^-$, then $\rho_{k_{3}}(e_0)$ is a strictly decreasing function of $e_0$ and 
$\rho_{k_3}(e_0)\longrightarrow 0$ as $e_0\rightarrow 1^-$.\\}
\end{remark}

\subsection{Analyticity properties of $e_{\tau 1}$, $e_{\tau 2}$, $e_r$, $e_h$ and $e_{hr}$} \label{sec_sing_ell_mot}

Now let us discuss  the analyticity properties of the complex extensions of the elliptic motions defined in \eqref{trans1}--\eqref{funchyprot}:
\begin{eqnarray*}
e_h(z) &:=&  \frac{1}{{1-e_0^2\cos^2 z}}\\
e_{\tau 1}(z) &:=& \frac{\cos z}{{1-e_0^2\cos^2 z}} = e_h(z) \, \cos z  \\
e_{\tau 2} (z) & :=& \frac{\sin z}{{1-e_0^2\cos^2 z}}=  e_h(z) \, \sin z\\
e_r(z) &:=&   \frac{\sin (2 z) } {{1-e_0^2 \cos^2z}} = e_h(z) \, \sin (2z)\\
e_{hr}(z) &:=& \frac{\cos (2 z)}{{1-e_0^2 \cos^2z}}= e_h(z) \, \cos (2z).\\
\end{eqnarray*}

The analyticity and the singularities of these functions are the same as those of  $e_{h}(z)$.  More specifically:\\

\begin{proposition}\label{secanaliticityeh}
The function $e_{h}(z)$ is analytic except at the following singular points (which are poles):
$$
\zeta_n =  n\pi  \pm i \, \rho_{0} \qquad {\rm for}\; n\in \Z.
$$
In particular its maximal strip of analyticity is given by
$$\Sigma_{{\rho_0}} =  \left\{ z\in \C: \; \left|{\rm Im}(z)\right| <  \rho_{0} \right\}.\\$$
\end{proposition}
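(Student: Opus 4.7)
The plan is to locate the singularities of $e_h(z) = \dfrac{1}{1 - e_0^2 \cos^2 z}$ by solving $1 - e_0^2 \cos^2 z = 0$, verify they are (simple) poles, and then read off the maximal strip of analyticity.

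First, I would note that $e_h(z)$ is a rational function of $\cos z$, hence meromorphic on all of $\C$, with singularities exactly at the zeros of its denominator. The equation $1 - e_0^2 \cos^2 z = 0$ becomes $\cos z = \pm 1/e_0$. Since $0 < e_0 < 1$, we have $1/e_0 > 1$, so there are no real solutions. Writing $z = x + iy$, the identity $\cos z = \cos x \cosh y - i\sin x \sinh y$ shows that in order for $\cos z$ to be real, one needs $\sin x \sinh y = 0$. The case $y = 0$ is ruled out as above; the case $\sinh y = 0$ forces $y = 0$, excluded; so we must have $\sin x = 0$, i.e.\ $x = n\pi$ for some $n \in \Z$. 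With this choice $\cos z = \pm \cosh y$, and the equation $\pm \cosh y = \pm 1/e_0$ gives $\cosh y = 1/e_0$, whence $y = \pm \arcosh(1/e_0) = \pm \rho_0$. Thus the singular set is exactly $\{\zeta_n = n\pi \pm i\rho_0 : n \in \Z\}$.

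Next, I would check these are simple poles. Differentiating the denominator yields $\tfrac{d}{dz}(1 - e_0^2\cos^2 z) = e_0^2 \sin(2z)$. At $z = n\pi \pm i\rho_0$, $\sin(2z) = \sin(2n\pi \pm 2i\rho_0) = \pm i \sinh(2\rho_0) \neq 0$, so each zero of the denominator is simple. Hence $e_h$ has a simple pole at each $\zeta_n$ and no other singularities.

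Finally, since the singular points with smallest imaginary part (in absolute value) are at $\pm i\rho_0$ (with $x = 0$), the function $e_h$ extends holomorphically to the strip $\Sigma_{\rho_0} = \{|\Im z| < \rho_0\}$, and this strip is maximal because $\pm i\rho_0$ lie on its closure. The main calculation in this proof is essentially elementary; there is no real obstacle, only the algebraic manipulation of $\cos z$ for complex $z$ to pin down the singular locus and verify that the poles are simple.
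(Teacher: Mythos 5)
Your proof is correct and takes essentially the same route as the paper. The paper observes $e_h(z)=1/h_{e_0}(z+\pi/2)$ with $h_k(z)=1-k^2\sin^2 z$, and then appeals to its Lemma~\ref{lm:zeroes-hk} which locates the zeros of $h_k$ by separating real and imaginary parts of $\sin^2(x+iy)$; your proposal performs the equivalent separation directly on $\cos z$, arriving at the same singular set $n\pi\pm i\rho_0$. The only genuine addition in your write-up is the explicit verification (via $\tfrac{d}{dz}(1-e_0^2\cos^2 z)=e_0^2\sin(2z)\neq 0$ at the singular points) that the poles are simple, which the paper does not bother to state.
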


\medskip

We will prove this proposition in Appendix \ref{appsingularitiesellipticmotions}.

\medskip

\begin{remark}{\rm
Since $
\rho_{0} > \rho_{k_q}$ for any $q\geq 3$,
we conclude that
$e_{\tau 1}$, $e_{\tau 2}$, $e_r$, $e_h$, $e_{hr}$  cannot be generated as a finite linear combination of functions $s_q(z)$ and $c_q(z)$ with $q\geq 3$.\\
}
\end{remark}

\subsection{Linear independence}
It follows from the discussion in subsections \ref{sec_analiticitycqsq} and \ref{sec_sing_ell_mot}
that  looking at singularities of these functions, it is possible to deduce the following proposition.
\medskip 
{
\begin{proposition}\label{linearindep}
The functions $e_h$, $ e_{\tau 1}$, $e_{\tau 2}$, $e_{hr}$,  $e_{r}$, $\{s_q\}_{q\geq 3}$ and $\{c_q\}_{q\geq 3}$ are linearly independent, namely, none of them can be written as a finite linear combination of the others.\\
\end{proposition}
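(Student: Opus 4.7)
My plan is to exploit the distinct singularity structures of the two families of functions, as described in Propositions \ref{prop_anal_cqsq} and \ref{secanaliticityeh}. I would suppose, toward a contradiction, that there is a finite nontrivial relation
\begin{equation*}
\alpha_0 e_h + \alpha_1 e_{\tau 1} + \alpha_2 e_{\tau 2} + \alpha_3 e_{hr} + \alpha_4 e_r + \sum_{q=3}^N (\beta_q c_q + \gamma_q s_q) = 0
\end{equation*}
holding on $\R/2\pi\Z$, and then argue that analytic continuation propagates this identity to the common strip of holomorphy $\Sigma := \{z \in \C : |\Im z| < \rho_{k_3}\}$, where $\rho_{k_3}$ is the smallest relevant width by Remark \ref{remstripanalit} ($\rho_{k_q}$ is strictly increasing in $q$, and $\rho_{k_q} < \rho_0$ for every $q \geq 3$).

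The heart of the argument is an induction on $q$, starting at the smallest index. At the boundary point $z_0 := \pi/2 + i\rho_{k_3}$, which is a ramification singularity only of $c_3$ and $s_3$ (the elliptic motions are holomorphic there since $\rho_{k_3} < \rho_0$, and the $c_q, s_q$ with $q \geq 4$ are holomorphic there since $\rho_{k_q} > \rho_{k_3}$), I would compute a local expansion. Combining the square-root branching of $\sqrt{1 - k_q^2 \sin^2 z}$ at $z_0$ with the classical identity $F(\pi/2 + i\rho_{k_q}; k_q) = K(k_q) + i K(k_q')$, one obtains
\begin{equation*}
c_q(z) = \frac{\cos \theta_0}{\sqrt{-2 k_q \cos z_0\,(z - z_0)}} + O(1),\qquad s_q(z) = \frac{\sin \theta_0}{\sqrt{-2 k_q \cos z_0\,(z - z_0)}} + O(1),
\end{equation*}
with $\theta_0 = \frac{\pi q}{2} + i\,\frac{\pi q K(k_q')}{2 K(k_q)}$. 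Since every other term in the relation remains bounded near $z_0$, the coefficient of the leading $(z - z_0)^{-1/2}$ must vanish, yielding the single complex equation $\beta_3 \cos \theta_0 + \gamma_3 \sin \theta_0 = 0$. With $\beta_3, \gamma_3 \in \R$, the real and imaginary parts form a $2 \times 2$ system whose determinant equals $\tfrac{1}{2}\sinh\!\bigl(\tfrac{\pi q K(k_q')}{K(k_q)}\bigr) \neq 0$ (for $q = 3$), forcing $\beta_3 = \gamma_3 = 0$.

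Iterating at $z_0 = \pi/2 + i\rho_{k_q}$ for $q = 4, 5, \ldots, N$ will successively kill all pairs $(\beta_q, \gamma_q)$, reducing the identity to $e_h(z)\bigl(\alpha_0 + \alpha_1 \cos z + \alpha_2 \sin z + \alpha_3 \cos 2 z + \alpha_4 \sin 2 z\bigr) = 0$; since $e_h \not\equiv 0$ and $\{1, \cos z, \sin z, \cos 2z, \sin 2z\}$ are $\R$-linearly independent, I would then conclude $\alpha_0 = \cdots = \alpha_4 = 0$. I expect the most delicate point to be the local expansion at the branch point --- one must track \emph{simultaneously} the $1/\sqrt{z - z_0}$ behaviour of $1/\sqrt{1 - k_q^2 \sin^2 z}$ and the branch-type singularity of $F(z; k_q)$ --- together with the observation that the non-reality of $\theta_0$ (equivalently, the positivity of $K(k_q')/K(k_q)$) is precisely what prevents any nontrivial combination $\beta c_q + \gamma s_q$ from extending holomorphically across $z_0$.
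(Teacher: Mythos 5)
Your proposal is correct and follows the same route as the paper's proof — distinguishing the functions by the locations and nature of the singularities of their complex extensions, using Propositions \ref{prop_anal_cqsq} and \ref{secanaliticityeh} together with the strict monotonicity of $q \mapsto \rho_{k_q}$ and the bound $\rho_{k_q} < \rho_0$ from Remark \ref{remstripanalit}. The paper states the conclusion in two lines ("looking at the singularities ... it follows"), whereas you carry out the branch-point expansion explicitly; this actually supplies a nontrivial detail that the paper's sketch leaves implicit, namely why $c_q$ and $s_q$ for the \emph{same} $q$ are independent even though their singularity sets coincide — your observation that $\theta_0 = \tfrac{\pi q}{2} + i\tfrac{\pi q K(k_q')}{2K(k_q)}$ has nonzero imaginary part, so the real and imaginary parts of $\beta\cos\theta_0 + \gamma\sin\theta_0 = 0$ force $\beta = \gamma = 0$, is exactly the missing ingredient (and is the same identity $F(\tfrac{\pi}{2}+i\rho_{k_q};k_q)=K(k_q)+iK(k_q')$ used later in the paper's Appendix D).
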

}

\begin{proof}
Clearly, $e_{\tau 1}$, $e_{\tau 2}$, $e_r$, $e_h$, $e_{hr}$ are linear independent. 
Looking  at the  singularities of the complex extensions of these functions, it follows  that:
\begin{itemize}
\item $e_h$, $e_{\tau 1}$, $e_{\tau 2}$, $e_{hr}$, $e_r$  cannot be generated as a finite linear combination of $s_q$ and $c_q$ with $q\geq 3$;
\item for any $q_0 \geq 3$, $s_{q_0}$ and $c_{q_0}$ cannot be generated as a finite linear combination of $e_{\tau 1}$, $e_{\tau 2}$, $e_{r}$,  $e_{h}$,  $e_{hr}$, $\{s_q\}_{q\neq q_0}$ and $\{c_q\}_{q\neq q_0}$.
\end{itemize}
\end{proof}

\begin{remark}
A much more subtle and delicate issue is to understand whether these function can be obtained as  infinite combinations of the others. This matter is related to our discussion in subsection \ref{sec:basis-prop} and in Appendix \ref{lastbutnotleast}.\\
\end{remark}


\subsection{Weighted $L^2(\T)$ space}

Let us denote by $\|\cdot\|_{L^2_{e_0}}$ the $L^2$-norm induced by the inner 
product with weight $w_{e_0}(\f):= (1-e_0^2 \cos^2\f)$, {\it i.e.},
$$
\langle f,g\rangle_{L^2_{e_0}} := \langle {w_{e_0}}\, f, {w_{e_0}}\, g \rangle_{L^2}.
$$ 
For $0\leq e_0 <1$, this norm is clearly equivalent to the usual $L^2$-norm; in fact 
for each $f\in L^2(\T)$ we have
$$
(1-e_0^2) \|f\|_{L^2}   \leq \|f\|_{L^2_{e_0}} \leq \|f\|_{L^2}.
$$

We denote by $L^2_{e_0}(\T)$  the space $L^2(\T)$ equipped with $\|\cdot\|_{L^2_{e_0}}$.\\

Clearly, with the choice of this weighted norm, the functions $e_h, e_{\tau1},e_{\tau 2}, e_{r}, e_{hr}$ are  mutually orthogonal in $L^2_{e_0}$  (observe in fact that 
when multiplied by the weight, they become $\cos (k\f)$ for some $k=0,1,2$ or $\sin (k\f)$ for some $k=1,2$).

In particular:
\begin{eqnarray*}
\|e_h\|_{L^2_{e_0}} &=& \sqrt{2\pi} \\
\|e_{\tau 1}\|_{L^2_{e_0}} &=& 
\|e_{\tau 2}\|_{L^2_{e_0}} \; =\;
\|e_r\|_{L^2_{e_0}} \;=\;
\|e_{hr}\|_{L^2_{e_0}}\;=\;
\sqrt{\pi}.
\end{eqnarray*}

On the other hand, for $q\geq 3$:
\begin{eqnarray*}
\|c_q\|^2_{L^2_{e_0}} &=& 
\int_0^{2\pi}
\frac{\cos^2 \left( \frac{2\pi\, q}{4K(k_{\l_q})} F(\f; k_{\l_q}) \right)}{ 
{1-k_{\l_q}^2 \sin^2\f}} (1-e_0^2\cos^2\f)^2 d\f\\
&=& 4K(k_{\l_q})
\int_0^1 \cos^2 \left( 2\pi\, q\, \xi  \right)   \frac{(1-e_0^2\cos^2\f(\xi))^2}{\sqrt{1-k_{\l_q}^2 \sin^2\f(\xi)}}
\,d\xi\\
&=& \frac{2 K(k_{\l_q})}{q\pi}
\int_0^{2\pi q} \cos^2  t  \;   \frac{(1-e_0^2\cos^2\f(\xi(t)))^2}{\sqrt{1-k_{\l_q}^2 \sin^2\f(\xi(t))}}\,dt.\\
\end{eqnarray*}
Hence:
$$
(1-e_0^2)^2 {2 K(k_{\l_q})}
 \leq \|c_q\|^2_{L^2_{e_0}} \leq  \frac{2 K(k_{\l_q})}{\sqrt{1-k_{\l_q}^2}}.
$$
In particular, using that 
$K(\cdot)$ is an increasing function and $k_{\l_q}$ is decreasing with respect to $q$, 
we can obtain uniform bounds:
$$
(1-e_0^2)^2 {2 K(e_0)}
 \leq \|c_q\|^2_{L^2_{e_0}} \leq  \frac{2 K(k_{\l_3})}{\sqrt{1-k_{\l_3}^2}} \qquad \forall q\geq 3.
$$

Similarly, for the functions $s_q$.\\

In order to simplify our notation, hereafter we will denote

\begin{equation}\label{changenotation1}
{{\bbe}}_0 := \frac{e_{h}}{\sqrt{2\pi}}, \quad \bbe_1 := \frac{e_{\tau 2}}{\sqrt{\pi}}, \quad \bbe_2 := \frac{e_{\tau 1}}{\sqrt{\pi}}, \quad \bbe_3 := \frac{e_{r}}{\sqrt{\pi}} \quad  \bbe_4 := \frac{e_{hr}}{\sqrt{\pi}}
\end{equation}
and 
\begin{equation}\label{changenotation2}
\bbe_{2k} := \dfrac{c_k}{\|c_k\|_{L^2_{e_0}}},  \quad \bbe_{2k-1}: = \frac{s_k}{\|s_k\|_{L^2_{e_0}}} \qquad \forall\;k\geq 3.\\
\end{equation}

\bigskip

The family $\{\bbe_k\}_{k=0}^{+\infty}$ consists of linearly independent normal vectors in $L^2_{e_0}$. We want to show that they are  a basis.

\medskip

\subsection{Basis property}\label{sec:basis-prop}

In this subsection we want to prove that $\{\bbe_k\}_{k\geq 0}$ form a basis of $L^2_{e_0}(\T)$, or equivalently of $L^2(\T)$. 
{We need to show that they form a complete set of generators.} \\

Let us start with the following proposition.
\begin{proposition}\label{newpropindep}
Let $q_0\geq 3$; then
$$
 \langle \left\{\bbe_k\right\}_{0\leq k \leq  2q_0}\rangle 
 \cap 
\overline{ \langle \left\{\bbe_k\right\}_{k> 2q_0}
\rangle}  = \{0\}.\\
$$
\end{proposition}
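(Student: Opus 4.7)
The plan is to argue by contradiction. Suppose $f\in V_0\cap V_1$ with $f\neq 0$, where $V_0:=\langle\{\bbe_k\}_{0\leq k\leq 2q_0}\rangle$ and $V_1:=\overline{\langle\{\bbe_k\}_{k>2q_0}\rangle}$. By Proposition~\ref{linearindep}, the finite expansion $f=\sum_{k=0}^{2q_0}\alpha_k\bbe_k$ is unique. Let $q^*\in\{3,\ldots,q_0\}$ be the largest index with $(\alpha_{2q^*-1},\alpha_{2q^*})\neq(0,0)$, and set $q^*=0$ if no such index exists (so $f$ is a pure elliptic-motion combination).

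The generic case is $q^*\geq 3$. Fix any $\rho^*\in(\rho_{k_{q_0}},\rho_{k_{q_0+1}})$. By the strict monotonicity $\rho_{k_q}\nearrow\rho_0$ (Remark~\ref{remstripanalit}) together with Propositions~\ref{prop_anal_cqsq} and~\ref{secanaliticityeh}, the summands of $f$ with $q<q^*$ and the elliptic motions are all holomorphic at $z_\ast:=\pi/2+i\rho_{k_{q^*}}\in\Sigma_{\rho^*}$, whereas $c_{q^*}$ or $s_{q^*}$ contributes a genuine ramification singularity there; hence $f$ has a true singularity at $z_\ast$. I would then show $V_1\subset\mathcal H^2(\Sigma_{\rho^*})$ (the Hardy space of functions holomorphic in $\Sigma_{\rho^*}$ with square-integrable boundary traces on $\{\Im z=\pm\rho^*\}$). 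This reduces to a uniform exponential Fourier-decay estimate
\[
|\widehat{c_q}(n)|,\;|\widehat{s_q}(n)|\;\leq\;C(q_0,\rho^*)\,e^{-\rho^*|n|},\qquad q>q_0,\;n\in\Z,
\]
obtained by shifting contours onto $\{\Im z=\pm\rho^*\}$ (admissible by Proposition~\ref{prop_anal_cqsq} since $\rho^*<\rho_{k_q}$ uniformly in $q>q_0$) and bounding $|c_q|,|s_q|$ there uniformly in $q$, using the positive gap $\rho_{k_q}-\rho^*\geq\rho_{k_{q_0+1}}-\rho^*>0$. Combined with Parseval, this bound passes to the $L^2$-closure and gives the claimed inclusion, contradicting the singularity of $f$ at $z_\ast$; this forces all dynamical-mode coefficients of $f$ to vanish, reducing to $q^*=0$.

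In the residual case $q^*=0$, $f$ is a nontrivial combination of $e_h,e_{\tau 1},e_{\tau 2},e_r,e_{hr}$; since these themselves lie in $\mathcal H^2(\Sigma_{\rho^*})$, the Hardy inclusion alone is insufficient. Here I would construct explicitly five biorthogonal duals $\phi_0,\ldots,\phi_4\in V_1^\perp\subset L^2_{e_0}(\T)$ with $\langle\phi_i,\bbe_j\rangle_{L^2_{e_0}}=\delta_{ij}$ for $0\leq i,j\leq 4$: their existence will follow from the distinguished pole structure of the elliptic motions at the boundary points $n\pi\pm i\rho_0$ (Proposition~\ref{secanaliticityeh}), which determines five linearly independent continuous functionals that vanish on $V_1$ but not on the span of the elliptic motions. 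Applying these functionals to $f\in V_1=(V_1^\perp)^\perp$ forces $\alpha_i=0$ for $0\leq i\leq 4$, completing the proof. The principal obstacle is the uniform exponential decay estimate above: although each $c_q$ is holomorphic in $\Sigma_{\rho_{k_q}}\supset\Sigma_{\rho^*}$, $|c_q(z)|$ grows exponentially in $q$ near the ramification lines $\{\Im z=\rho_{k_q}\}$, so bounding $c_q$ on $\{\Im z=\pm\rho^*\}$ uniformly in $q$ requires careful use of the gap $\rho_{k_q}-\rho^*$ and the explicit integral representations from subsection~\ref{subsec:elliptcinteg}, together with the convergence $k_q\to e_0$ controlling $\Im F(z;k_q)$.
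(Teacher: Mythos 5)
Your argument fails at the Hardy-space inclusion, and the failure is not cosmetic. You propose to show $V_1:=\overline{\langle\{\bbe_k\}_{k>2q_0}\rangle}\subset\mathcal H^2(\Sigma_{\rho^*})$ via the uniform bound $|\widehat{c_q}(n)|\leq C(q_0,\rho^*)\,e^{-\rho^*|n|}$ for all $q>q_0$ and $n\in\Z$. This bound cannot hold: by Lemma~\ref{lemmaestimateaqj}, $\widehat{c_q}(q)$ is of order $K(k_q)/\pi$, i.e.\ bounded away from zero uniformly in $q$, whereas your estimate would force $|\widehat{c_q}(q)|\lesssim e^{-\rho^*q}\to 0$. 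The reason is that $|c_q(z)|$ on the line $\{\Im z=\rho^*\}$ grows like $e^{cq\rho^*}$ in $q$: the argument of the cosine in \eqref{eq:dyn-basis} is $\tfrac{2\pi q}{4K(k_q)}F(z;k_q)$, whose imaginary part on that line is proportional to $q$, so the sup of $|c_q|$ on any horizontal line off the real axis blows up exponentially in $q$, and the Paley--Wiener constant does too. Consequently $V_1\not\subset\mathcal H^2(\Sigma_{\rho})$ for any $\rho>0$. (Indeed $V_1$ has finite codimension in $L^2(\T)$ once the basis property is in place, so it must contain non-analytic elements; more directly, $\sum_{q>q_0}\beta_q s_q$ with $\beta_q\in\ell^2$ decaying slower than any geometric rate gives Fourier coefficients $\widehat{g}(n)\approx\beta_n$ which violate the $e^{2\rho|n|}$-weighted $\ell^2$ condition.) The paper avoids this obstruction entirely: it never asserts analyticity of all of $V_1$ but only controls the one element $\sum_{k>2q_0}d_{jk}\bbe_k$ obtained by orthogonal projection, using Theorem~\ref{lm:corr-decay}, Lemma~\ref{lm:convolve-bound} and Lemma~\ref{lemmadjk} to show $|d_{jk}|\lesssim(\lambda^*+\delta)^{\widehat k}$ with $\lambda^*=\exp[-(\sigma_\infty(\rho_{k_{q_0}})-\rho_{k_{q_0}})]$. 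Proposition~\ref{prop:anal-bnds} supplies precisely the positive gap $\sigma_\infty(\rho_{k_j})-\rho_{k_j}>\kappa^*\rho_{k_j}$ that makes that specific geometric decay overcome the exponential growth of $\bbe_k$ on horizontal lines, so the tail converges to a function analytic strictly beyond $\rho_{k_{\widehat j}}$ and hence $v_j$ retains the exact singularity of $\bbe_j$. Without an estimate of that kind on the particular coefficients involved, a contour-shift argument cannot be made uniform in $q$.

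Two further issues. First, you take $q^*$ to be the \emph{largest} index with nonzero $(\alpha_{2q^*-1},\alpha_{2q^*})$, but since $\rho_{k_q}$ is strictly increasing in $q$ (Remark~\ref{remstripanalit}), summands with $q<q^*$ have ramification points at imaginary part $\rho_{k_q}<\rho_{k_{q^*}}$, strictly closer to the real axis, and so are not holomorphic at $z_*=\tfrac\pi2+i\rho_{k_{q^*}}$. To isolate a singularity of $f$ you would want the \emph{smallest} such index. Second, the residual case $q^*=0$ is circular as sketched: the existence of a biorthogonal system $\phi_0,\ldots,\phi_4\in V_1^\perp$ with $\langle\phi_i,\bbe_j\rangle_{L^2_{e_0}}=\delta_{ij}$ is equivalent to (indeed strictly stronger than) the very statement that $\langle\bbe_0,\ldots,\bbe_4\rangle\cap V_1=\{0\}$, and no construction is actually provided.
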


\medskip

\noindent {The proof of this proposition is postponed to Appendix \ref{lastbutnotleast}.
}

\medskip

Let us now introduce the linear map $\mathcal L_{q_0}:L^2(\T) \to L^2(\T)$ defined by mapping {the standard 
Fourier basis into the following functions:}
{\begin{eqnarray}\label{deflinearmap}
\frac{1}{\sqrt{2\pi}} &\longmapsto& \frac{1}{\sqrt{2\pi}}\nonumber\\
\frac{1}{\sqrt{\pi}}\cos (q\f)  &\longmapsto& \frac{1}{\sqrt{\pi}}\cos (q\f) \qquad \mbox{for}\; 0<q\leq q_0\nonumber\\
\frac{1}{\sqrt{\pi}}\sin (q\f)  &\longmapsto& \frac{1}{\sqrt{\pi}}\sin (q\f) \qquad \mbox{for}\; 0<q\leq q_0\\
\frac{1}{\sqrt{\pi}}\cos (q\f)  &\longmapsto&c_q(\varphi) \qquad \qquad \;\;\; \mbox{for}\; q> q_0\nonumber\\
\frac{1}{\sqrt{\pi}}\sin (q\f)  &\longmapsto&s_q(\varphi) \qquad \qquad \;\;\; \mbox{for}\; q> q_0\nonumber .
\end{eqnarray}
}

\begin{lemma}\label{invertibilityimpliesbasis}
Suppose there is $q_0\geq 3$ such that the linear map $\mathcal L_{q_0}$ 
is invertible. Then, $\{\bbe_{k}\}_{k\geq0}$ is a basis of $L^2(\T)$.
\end{lemma}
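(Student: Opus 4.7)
The plan is to use the invertibility of $\mathcal{L}_{q_0}$ to split $L^2(\T)$ as a topological direct sum of a trigonometric polynomial part and a ``tail'' spanned by the $c_q, s_q$ with $q > q_0$, and then to combine Propositions \ref{linearindep} and \ref{newpropindep} to show that the first factor can also be accessed, modulo the tail, by the $2q_0+1$ vectors $\bbe_0,\dots,\bbe_{2q_0}$.

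First I would observe that $\mathcal{L}_{q_0}$ is the identity on the finite-dimensional subspace $V := \langle 1,\cos q\f,\sin q\f:1\le q\le q_0\rangle$ and maps the complementary closed subspace $W := \overline{\langle\cos q\f,\sin q\f:q>q_0\rangle}$ into $U := \overline{\langle c_q,s_q:q>q_0\rangle}$. Its invertibility as a bounded operator on $L^2$ is equivalent to the image of the Fourier basis being a (Schauder) basis of $L^2(\T)$; this yields the topological direct sum $L^2(\T)=V\oplus U$, with continuous projection onto $V$ along $U$.

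The heart of the argument is to analyze the family $\bbe_0,\dots,\bbe_{2q_0}$ (the five elliptic motions together with the normalized $s_q,c_q$ for $3\le q\le q_0$) modulo $U$. Proposition \ref{newpropindep} asserts $\langle\{\bbe_k\}_{k\le 2q_0}\rangle\cap U=\{0\}$; combined with the linear independence of the $\bbe_k$ from Proposition \ref{linearindep}, this implies that the natural map $\langle\{\bbe_k\}_{k\le 2q_0}\rangle\to L^2/U$ is injective. Since $\dim(L^2/U)=\dim V=2q_0+1$, the classes $[\bbe_0],\dots,[\bbe_{2q_0}]$ must then form a basis of $L^2/U$. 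Completeness of $\{\bbe_k\}_{k\ge 0}$ in $L^2(\T)$ follows immediately: every $f\in L^2$ decomposes as $f=\sum_{k=0}^{2q_0}\alpha_k\bbe_k+r$ with $r\in U$, and $r$ is, by definition of $U$, the $L^2$-limit of finite linear combinations of $\{\bbe_k\}_{k>2q_0}$.

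For uniqueness, assume $\sum_{k\ge 0}a_k\bbe_k=0$ in $L^2$. The finite head $\sum_{k=0}^{2q_0}a_k\bbe_k \in \langle\{\bbe_k\}_{k\le 2q_0}\rangle$ equals the negative of the tail, which lies in $U$; Proposition \ref{newpropindep} forces both to vanish, Proposition \ref{linearindep} then kills the coefficients $a_0,\dots,a_{2q_0}$, and the remaining tail coefficients vanish because $\{c_q,s_q\}_{q>q_0}$ is a Schauder basis of $U$ (being the image of an orthonormal tail of the Fourier basis under the invertible $\mathcal{L}_{q_0}$). The main subtlety I would need to pin down is that ``invertibility of $\mathcal{L}_{q_0}$'' is being used in the right functional-analytic sense — that it produces the topological splitting $L^2=V\oplus U$ and the Schauder basis property of the image of the Fourier basis — rather than merely pointwise algebraic bijectivity; once that is in place, the rest is a clean finite-dimensional counting argument in $L^2/U$.
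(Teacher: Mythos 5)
Your proof is correct and follows essentially the same route as the paper: you use invertibility of $\mathcal L_{q_0}$ to show that $U=\overline{\langle c_q,s_q\rangle_{q>q_0}}$ has codimension $2q_0+1$, and then combine Propositions \ref{linearindep} and \ref{newpropindep} to conclude that the $2q_0+1$ head vectors $\bbe_0,\dots,\bbe_{2q_0}$ complement $U$. The only difference is that you spell out the topological splitting $L^2=V\oplus U$ and the Schauder-basis/uniqueness step explicitly, which the paper leaves implicit in its final "set of generators, hence a basis."
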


\begin{proof}
Since the corresponding linear map is invertible, then the collection 
$$\left\{\frac{1}{\sqrt{2\pi}}, \frac{1}{\sqrt{\pi}}
\cos (q\f), \frac{1}{\sqrt{\pi}}\sin (q\f) \right\}_{0<q\le q_0} \ \ \ \ \!\!\!\!\!\! \cup\;\; \{c_q(\f),s_q(\f)\}_{q>q_0}$$
also forms a basis and, in particular, it spans the whole space $L^2(\T)$. 
This implies that the subspace $$ \overline{\langle \left\{c_q,s_q\right\}_{q> q_0}\rangle} = 
\overline{\langle \left\{\bbe_k\right\}_{k> 2q_0}\rangle}$$ 
has codimension $2q_0+1$. \medskip

{It follows from Proposition  \ref{linearindep} that  the linear subspace spanned by 
$\left\{ \bbe_k\right\}_{0\leq k\leq 2q_0}$ has dimension $2q_0+1$ and from 
Proposition \ref{newpropindep}  that}
$$
\overline{\langle \left\{\bbe_k\right\}_{k> 2q_0}}
\rangle \cap \langle \left\{\bbe_k\right\}_{0\leq k \leq  2q_0}\rangle = \{0\}.
$$
We can conclude from this that
$$ \overline{\langle \left\{\bbe_k\right\}_{k> 0}\rangle } = L^2(\T).$$ 
Hence,  $\left\{\bbe_k\right\}_{k\geq 0}$ form a set of 
generators of $L^2(\T)$ and therefore  a basis.
\end{proof}

\medskip

The problem now reduces to show that the linear map $\mathcal L_{q_0}$, defined by \eqref{deflinearmap}, is invertible for some $q_0\geq 3$. \\

For $q\in \Z_+$ and $j\geq 3$, let us consider the elements of the (infinite) correlation matrix $\widetilde{A}=(\ta_{i,h})_{i,h=0}^{\infty}$,  whose entries are  
\begin{eqnarray}\label{matr:corr-coeff}
\beal 
\ta_{2q,2j} &:=&   \langle \cos (q \f), c_{j} \rangle_{L^2}  \\
\ta_{2q,2j+1} &:=&   \langle \cos (q \f), s_{j} \rangle_{L^2}  \\  
\ta_{2q+1,2j} &:=&   \langle \sin (q \f), c_{j} \rangle_{L^2} \\ 
\ta_{2q+1,2j+1} &:=&   \langle \sin (q \f), s_{j} \rangle_{L^2}.  \\
\enal 
\end{eqnarray}

\begin{lemma} \label{lemmaestimateaqj} 
There exists $\rho=\rho(e_0,c)>0$ such that for all $q\in\N$ and $j\geq 6$:
\begin{eqnarray*}
\ta_{q,j}  = {2} K(k_{[j/2]}) \,\delta_{q,j} + O_{e_0,c}\left(  j^{-1} \,
e^{-\rho\, {|q-j|}} \right),
\end{eqnarray*}
where $[\cdot]$ denotes the integer part, $\delta_{q,j}$ the Dirac's delta, and 
$O_{e_0,c}(*)$ means that  the absolute value of the corresponding term is bounded by $*$ 
times a factor depending only on $e_0$ and $c$.
\end{lemma}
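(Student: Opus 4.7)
My plan has three stages: a reduction via change of variables, a complex-analytic contour shift for exponential decay, and diagonal/subleading estimates.

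\emph{Stage 1: Change of variables.} The first step is to perform the action-angle substitution $\theta := \pi F(\f; k_{[j/2]})/(2K(k_{[j/2]}))$ associated with the caustic of rotation number $1/[j/2]$. Under this transformation, $c_{[j/2]}(\f)\,d\f$ becomes $\cos(j\theta) \cdot (2K(k_{[j/2]})/\pi)\,d\theta$ (and similarly for $s_{[j/2]}$), while $\cos(q\f)$ becomes $\cos(q\f(\theta))$ with $\f(\theta) = \am(2K(k_{[j/2]})\theta/\pi; k_{[j/2]})$. Thus $\ta_{q,j}$ reduces (up to the Jacobian $2K(k_{[j/2]})/\pi$) to an integral of $\text{trig}(q\f(\theta))\,\text{trig}(j\theta)$ over $\T$, which is $\pi$ times a Fourier coefficient at mode $j$ of $\text{trig}(q\f(\theta))$ with respect to the angle $\theta$.

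\emph{Stage 2: Exponential decay via analyticity.} Writing $\f(\theta) = \theta + v(\theta)$, the quasiperiodicity $\am(u+2K;k) = \am(u;k)+\pi$ together with the oddness of $\am$ imply that $v$ is odd and $\pi$-periodic. By Proposition \ref{prop_anal_cqsq}, $v$ extends holomorphically to a complex strip $|\Im \theta| < \rho$ of width $\rho = \rho(e_0,c)>0$ that is uniform for $j \geq 6$, since $\rho_{k_{[j/2]}} \geq \rho_{k_3} > 0$. Decomposing $e^{\pm iq\f(\theta)} = e^{\pm iq\theta} \cdot e^{\pm iq v(\theta)}$, the $j$-th Fourier coefficient of the left side equals the $(j \mp q)$-th Fourier coefficient of the $2\pi$-periodic function $e^{\pm iq v(\theta)}$; a Cauchy-type contour shift then yields
\[
\bigl|\widehat{e^{\pm iqv}}(n)\bigr| \;\leq\; \|e^{\pm iq v}\|_{L^\infty(\Im \theta = \pm h)} \cdot e^{-h|n|}
\]
for any $0 < h < \rho$, giving the desired exponential decay in $|q - j|$ once the sup-norm factor is controlled.

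\emph{Stage 3: Diagonal term, $j^{-1}$ factor, and main obstacle.} The diagonal contribution at $q = j$ comes from the zeroth Fourier coefficient of $e^{iq v(\theta)}$, whose leading value combined with the Jacobian $2K(k_{[j/2]})/\pi$ produces the main term $2K(k_{[j/2]})$. The $j^{-1}$ correction arises by integrating by parts once in the ``sum'' integral $\int_0^{2\pi} e^{i(q+j)\theta + iqv(\theta)}\,d\theta$: its phase derivative $q + j + qv'(\theta)$ is bounded below by $\sim j$ using the uniform bound on $\|v'\|_\infty$ supplied by the Lazutkin density (cf.\ Remark \ref{ADK-dot-bnd}). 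The main obstacle I expect is to control $\|e^{\pm iq v}\|_{L^\infty}$ on the shifted contour uniformly in $q$: a crude estimate $e^{q \sup |\Im v|}$ would absorb the exponential gain. I would handle this by exploiting the oddness of $v$ (which forces $v(iy)$ to be purely imaginary) together with the precise singularity structure from Proposition \ref{prop_anal_cqsq} (ramification points at $\pi/2 + n\pi \pm i\rho_{k_{[j/2]}}$) to quantify $\sup|\Im v|$ as a function of $h$; optimizing $h$ against $q$ and $|q-j|$ then yields an $O_{e_0,c}(j^{-1}e^{-\rho|q-j|})$ bound with constants depending only on $e_0$ and $c$, as claimed.
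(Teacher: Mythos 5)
Your Stage 1 change of variables is reasonable and close to the paper's, but the decomposition in Stage 2 has a fatal flaw that the paper's proof is specifically designed to avoid. You write $\f(\theta)=\theta+v(\theta)$ with $\f(\theta)=\am\bigl(\tfrac{4K(k_{[j/2]})}{2\pi}\theta;k_{[j/2]}\bigr)$, and $v(\theta)$ is the difference between the elliptic polar angle and the caustic action--angle. For a fixed ellipse with $e_0>0$ this function is of size $O(1)$ \emph{uniformly in $j$}: as $j\to\infty$ it converges to $\f_\infty(\theta)-\theta$, which is a nontrivial odd $\pi$-periodic function (its derivative at $0$ is $\tfrac{4K(e_0)}{2\pi}-1>0$). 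Consequently on the shifted contour $\Im\theta=h$ one has $\sup_x|\Im v(x+ih)|\sim h\|v'\|_\infty\sim c\,h$ with $c>0$ depending only on $e_0$, hence $\|e^{\pm iqv}\|_{L^\infty(\Im\theta=h)}\sim e^{cqh}$. The bound you obtain is therefore $e^{-h(|q-j|-cq)}$, which is useless in the regime $|q-j|\lesssim q$ --- precisely the near-diagonal entries the lemma is about. Your proposed rescue in Stage 3 (oddness of $v$, ramification points) cannot close this gap: oddness only says $v(iy)$ is purely imaginary on the imaginary axis, whereas the offending quantity is $\Im v(x+ih)$ for \emph{all real} $x$, and there the bound $\sim h\|v'\|_\infty$ is sharp. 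For the same reason your diagonal computation is off: the zeroth Fourier coefficient of $e^{iqv(\theta)}$ is $\tfrac{1}{2\pi}\int e^{iqv}\,d\theta$, which for $v=O(1)$ and large $q$ is a highly oscillatory integral that is not close to $1$, so you would not recover the main term $2K(k_{[j/2]})$.

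The paper's proof circumvents all of this by comparing the $j$-th caustic action--angle $\xi_j$ not to the elliptic angle $\f$ but to the \emph{boundary} action--angle $\xi_\infty$ (equivalently, the Lazutkin parameter). Writing $\xi_j\circ\f_\infty=\mathrm{id}+\Delta_j$, Lemma~\ref{lem:deviation} gives $\|\Delta_j\|_\rho=O_{e_0,c}(1/j^2)$ in a $q$-independent analytic strip, since $k_j-e_0=O(1/j^2)$. Therefore $j\Delta_j=O(1/j)$, the auxiliary functions $\cos(j\Delta_j)-1$, $\sin(j\Delta_j)$, $\Delta_j'$ all have analytic norm $O(1/j)$, and $e^{\pm ij\Delta_j}$ is \emph{uniformly bounded} on the shifted contour. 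Expanding $\cos\bigl(j(\xi_\infty+\Delta_j)\bigr)(1+\Delta_j')$ via product-to-sum formulas then produces the clean main term $2K(k_{[j/2]})\delta_{q,j}$ plus a remainder whose Fourier coefficients decay as $O(j^{-1}e^{-\rho|q-j|})$ by Paley--Wiener. The quadratic-in-$1/j$ smallness of $\Delta_j$, coming from the rate at which the caustics accumulate on the boundary, is the essential input that your sketch does not use and cannot do without.
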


The proof of the above lemma will be given in Appendix \ref{AppendixTechnical}.

\begin{proposition}\label{prop:basis}
There exists $q_0=q_0(e_0,c)\geq3$ such that $\mathcal L_{q_0}$ is invertible as an operator acting on $L^2(\T)$. In particular, it follows from Lemma \ref{invertibilityimpliesbasis} that
$\{\bbe_k\}_{k\geq 0}$ is a basis of $L^2(\T)$.
\end{proposition}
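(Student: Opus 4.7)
The plan is to view $\mathcal{L}_{q_0}$ in matrix form with respect to the standard orthonormal Fourier basis of $L^2(\T)$, and to invert it by treating its ``high--frequency'' block as a Neumann perturbation of a diagonal operator, exploiting the near--diagonal structure supplied by Lemma~\ref{lemmaestimateaqj}.

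First I would observe that, by the very definition \eqref{deflinearmap}, the matrix $M_{q_0}$ of $\mathcal{L}_{q_0}$ in the Fourier basis has the block upper--triangular form
$$
M_{q_0} \;=\; \begin{pmatrix} I_{2q_0+1} & A_{q_0} \\ 0 & B_{q_0} \end{pmatrix},
$$
where the upper--left identity block acts on the finite--dimensional span of the basis vectors associated with frequencies $0 \leq q \leq q_0$, while the blocks $A_{q_0}$ and $B_{q_0}$ contain, up to overall normalization factors of $1/\sqrt{\pi}$, the correlation coefficients $\ta_{i,h}$ from \eqref{matr:corr-coeff} with $h > 2q_0$. Thanks to this block upper--triangular structure, $\mathcal{L}_{q_0}$ is invertible as soon as $B_{q_0}$ is invertible on the corresponding subspace and $A_{q_0}$ is a bounded operator.

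Next I would write $B_{q_0} = D_{q_0} + E_{q_0}$, where $D_{q_0}$ is the diagonal part. By Lemma~\ref{lemmaestimateaqj}, the diagonal entries are (up to prefactor) $2K(k_{[h/2]})$; since $k_q > e_0$ for every $q \geq 3$ and $K(\cdot)$ is strictly increasing, these are uniformly bounded below by $2K(e_0) > 0$, so $\|D_{q_0}^{-1}\|$ is bounded independently of $q_0$. For the off-diagonal part $E_{q_0}$, I would apply Schur's test to the bound $|E_{q_0}(i,h)| \leq C_{e_0,c}\, h^{-1} e^{-\rho|i-h|}$: the row and column sums are dominated by geometric tails around the diagonal and carry an $h^{-1}$ gain from the restriction $h > 2q_0$, yielding $\|E_{q_0}\| = O_{e_0,c}(1/q_0)$. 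Choosing $q_0 = q_0(e_0,c)$ large enough that $\|D_{q_0}^{-1}\|\cdot\|E_{q_0}\| < 1$, a Neumann series provides $B_{q_0}^{-1} = (I + D_{q_0}^{-1}E_{q_0})^{-1} D_{q_0}^{-1}$. The block $A_{q_0}$ is Hilbert--Schmidt by the same exponential decay (it has only $2q_0+1$ rows and its entries decay like $h^{-1} e^{-\rho(h-q_0)}$ in the column index), hence bounded, and the block upper--triangular structure yields the invertibility of $M_{q_0}$. The basis conclusion then follows from Lemma~\ref{invertibilityimpliesbasis}.

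The main obstacle I anticipate is the quantitative Schur bound on $\|E_{q_0}\|$. What is actually needed is that $\|E_{q_0}\|$ tends to zero as $q_0 \to \infty$, not merely that it be finite. The $h^{-1}$ factor in Lemma~\ref{lemmaestimateaqj}, combined with the restriction $h > 2q_0$, is precisely what produces the $O(1/q_0)$ gain that makes the Neumann--series argument close; without it one would only get an $O(1)$ bound on $\|E_{q_0}\|$ with no mechanism for choosing $q_0$ to make it small enough. Once Lemma~\ref{lemmaestimateaqj} is available, every other step is structural, and the argument essentially reduces to a diagonal-dominance estimate on an infinite matrix.
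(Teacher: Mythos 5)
Your proposal is correct and takes essentially the same route as the paper: both proofs start from Lemma~\ref{lemmaestimateaqj}, write the operator as a diagonal part plus a small perturbation, verify that the diagonal is uniformly bounded below via $K(k_q)\geq K(e_0)>0$, and close with a Neumann series once the perturbation norm is $O(1/q_0)$. The only cosmetic differences are that you make the block upper--triangular structure of $\mathcal L_{q_0}$ explicit (so that only the lower--right block needs the perturbation argument) and you bound the perturbation via Schur's test, whereas the paper just works with the full matrix $(a_{q,j})$ from \eqref{atilde} and bounds the off--diagonal part by its Hilbert--Schmidt norm using Cauchy--Schwarz; both estimates deliver the required $O(1/q_0)$ from the $j^{-1}$ factor in Lemma~\ref{lemmaestimateaqj}.
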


\begin{proof} 
Let us  show that  there exists $q_0=q_0(e_0,c)\geq3$ such that the linear map $\mathcal L_{q_0}$ is invertible. 
Consider the infinite dimensional matrix ${A}=(a_{q,j})_{q,j}$ defined as:
{
\begin{equation}\label{atilde}
a_{q,j} = \left\{\begin{array}{rl}
\delta_{q,j} & \mbox{if $j<2q_0$, $q\geq 0$}\\
\frac{1}{\sqrt{2\pi}} \ta_{0,j} & \mbox{if $j\geq 2q_0$} \\
\frac{1}{\sqrt{\pi}} \ta_{q,j} & \mbox{if $j\geq 2q_0$, $q\geq 1$}. \\
\end{array}
\right.\\
\end{equation}
}

\medskip

{Using Lemma \ref{lemmaestimateaqj} and the fact that $K(k_j) \geq K(e_0)>0$ 
for all $j\geq 3$, we obtain  
$$
|a_{q,q}| \geq \min \left\{1, \dfrac{2}{\sqrt \pi}\  K(e_0) + 
O_{e_0,c}\left(  \frac{1}{q\, e^{\rho}} \right)\right\}.
$$
Observe that since $K(e_0)\geq \frac{\pi}{2}$ for $0\leq e_0<1$, then if 
one chooses $q_0$ sufficiently large, then the above minimum is achieved by $1$.}

{
Denote   by 
$\mathcal D_{q_0}$ the diagonal linear operator given by the diagonal elements of $\cL_{q_0}$.}
Notice that $\mathcal D_{q_0}$ is invertible and has bounded norm of the inverse; 
{in particular, for $q_0$ sufficiently large, $\|\mathcal D_{q_0}^{-1}\|_2\leq 1$. 
Again using Lemma \ref{lemmaestimateaqj}} we also have for each $q\geq 0$: 
\[
\sum_{q\ge q_0}\sum_{j=0, j\neq q }^\infty \left| a_{q,j}\right|^2 \le \frac{C}{q_0},
\]
for some suitable constant $C=C(e_0,c)$.  For any predetermined $\delta=\delta(e_0,c)>0$  
by choosing $q_0$ large enough we obtain
\begin{equation}\label{sumaj}
\sum_{q\ge q_0}\sum_{j=0, j\neq q }^\infty \left|{a_{q,j}}  \right|^2 < \delta(e_0,c).
\end{equation}
Using Cauchy-Schwarz, \eqref{sumaj} implies that with respect to the $L^2$-norm $\|\cdot\|_2$ we have 
\[
\|\mathcal L_{q_0}- {\mathcal D_{q_0}}\|_2 \le 
\delta(e_0,c) \le \frac{1}{2} \leq \frac 12 \|\mathcal D_{q_0}^{-1}\|^{-1}_2.
\]
This implies that $\mathcal L_{q_0}$ is invertible and concludes the proof. 
\end{proof}

\medskip

\section{Proof of the Main Theorem}\label{sec:proof-MainThm}

In this section we prove our Main Theorem. Let us first start by stating 
and proving the following approximation lemma similar to 
 \cite[Lemma 24]{ADK}.\\

\begin{lemma}[{\bf Approximation Lemma}] \label{finallemma}  
Let us consider the ellipse $\E_{e_0,c}$ and let $\partial\Omega$ be a rationally integrable $C^{39}$-deformation 
of $\E_{e_0,c}$, identified by  
a $C^{39}$ function $\mu$, {\it i.e.}, $\partial \Omega=\E_{e_0,c}+\mu$. {For every $L>0$, there exists 
a constant $C=C(e_0,c,L)$ such that if $\|\mu\|_{C^{39}}\leq L$, then the following holds.}
There exist an ellipse $\overline{\E}=\overline{\E}(\bar{x}_0,\bar{y}_0, \bar{c},\bar{\mu}_0,\bar{\theta})$ and a function $\overline{\mu}=\overline{\mu}({\overline{\f}})$ {\rm(}where $\overline{\f}$ is the angle with respect to the elliptic coordinate frame associated to $\overline{\E}$ {\rm)}, such that  
$\partial \Omega=\overline{\E} + \overline{\mu}$ {\rm(}see \eqref{perturbingEllipses}{\rm)} and
$$
\|\overline{\mu}\|_{C^1} \leq C(e_0,c,L) \|\mu\|^{703/702}_{C^1}.
$$
\end{lemma}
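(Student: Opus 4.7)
The plan is to decompose $\mu$ along the adapted basis $\{\bbe_k\}_{k \geq 0}$ of $L^2(\T)$ from Proposition \ref{prop:basis}, absorb the five-dimensional elliptic-motion component into a better ellipse $\overline{\E}$ via Proposition \ref{changingellipse}, and then quantitatively control the remaining dynamical-mode component by combining the integrability bounds of Remark \ref{ADK-dot-bnd} with the $C^{39}$ smoothness assumption through a Fourier-truncation argument, in the spirit of \cite[Lemma 24]{ADK}.

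First, I would write $\mu = \mu_{\rm ell} + \mu_{\rm dyn}$, where $\mu_{\rm ell}$ is the $L^2_{e_0}$-orthogonal projection onto $\mathrm{span}\{\bbe_0,\ldots,\bbe_4\}$ and $\mu_{\rm dyn}$ lies in the closed span of the dynamical modes $\{\bbe_k\}_{k \geq 5}$. Since $\mu_{\rm ell}$ is a linear combination of five fixed smooth elliptic-motion functions with coefficients of size $O(\|\mu\|_{C^1})$, Proposition \ref{changingellipse} produces an ellipse $\overline{\E}$ such that $\E_{e_0,c}+\mu_{\rm ell}$ and $\overline{\E}+\mu_{\overline{\E}}$ coincide, with $\|\mu_{\rm ell} - \mu_{\overline{\E}}\|_{C^1} \leq C(e_0,c)\|\mu\|_{C^1}^2$. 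Expressing $\partial\Omega$ in the elliptic coordinates adapted to $\overline{\E}$ (the associated reparametrization of $\T$ is a $C^1$-diffeomorphism that is $O(\|\mu\|_{C^1})$-close to the identity, so it only contributes a quadratic error), one obtains $\partial\Omega = \overline{\E} + \overline{\mu}$ with $\overline{\mu} = \mu_{\rm dyn} + O(\|\mu\|_{C^1}^2)$ in $C^1$-norm. It therefore suffices to show $\|\mu_{\rm dyn}\|_{C^1} \leq C(e_0,c,L)\|\mu\|_{C^1}^{703/702}$.

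Second, I would estimate $\mu_{\rm dyn}$ by a high/low-frequency decomposition at a scale $N$ subject to the admissibility constraint $N \leq c(e_0)\|\mu\|_{C^1}^{-1/8}$. The low-frequency part is driven by integrability: Remark \ref{ADK-dot-bnd} gives $|\langle\mu,c_q\rangle_{L^2}|, |\langle\mu,s_q\rangle_{L^2}| \leq C(e_0,c)\, q^8\|\mu\|_{C^1}^2$ for all such $q$, and via the quasi-diagonal structure of the correlation matrix in Lemma \ref{lemmaestimateaqj} together with the invertibility of $\mathcal L_{q_0}$ in Proposition \ref{prop:basis} these translate into matching bounds $|\hat\mu_{\rm dyn}(q)| \leq C(e_0,c)\, q^8\|\mu\|_{C^1}^2$ on the ordinary Fourier coefficients. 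Bernstein's inequality then yields a $C^1$-bound of order $N^{\alpha}\|\mu\|_{C^1}^2$ on the low modes. The high-frequency part is controlled by the $C^{39}$-regularity through $|\hat\mu(q)| \leq L\, q^{-39}$, giving $\|\mu_{\rm dyn}^{>N}\|_{C^1} \leq C L\, N^{-\beta}$ with $\beta$ depending on the regularity index $39$. Summing and optimizing $N$ subject to the admissibility constraint produces an estimate
\[
\|\mu_{\rm dyn}\|_{C^1} \leq C(e_0,c,L)\|\mu\|_{C^1}^{703/702},
\]
the specific exponent arising from balancing the polynomial prefactors $q^8$ (integrability), $N^{3/2}$ (Bernstein), and $N^{-37}$-type Fourier decay (smoothness), combined with the admissibility condition $N \leq c(e_0)\|\mu\|_{C^1}^{-1/8}$.

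The main obstacle will be the second step. The passage from dynamical-mode inner-product bounds to ordinary Fourier-coefficient bounds requires a quantitative inversion of the correlation matrix, using the exponentially-small off-diagonal decay from Lemma \ref{lemmaestimateaqj} in a Schur-complement style argument that remains uniform as the truncation scale $N$ grows; one must then track every polynomial prefactor to arrive at the specific exponent $703/702$. The marginal gain over $\|\mu\|_{C^1}$, namely a factor of only $\|\mu\|_{C^1}^{1/702}$, is what drives the relatively high regularity assumption $k \geq 39$, since each additional polynomial loss in the inversion or Bernstein steps must be absorbed by the Fourier decay coming from smoothness.
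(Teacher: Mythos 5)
Your first step matches the paper: decompose $\mu$ into an elliptic-motion component and an $L^2_{e_0}$-orthogonal dynamical component, absorb the former into a nearby ellipse using Proposition \ref{changingellipse} together with Lemma \ref{lemma13}, and reduce to bounding the orthogonal piece $\mu^\perp$ in $C^1$. The divergence is in how that $C^1$ bound is obtained, and here your mechanism is genuinely different from the paper's and, as described, does not close. The paper never inverts the correlation matrix to transfer bounds onto ordinary Fourier coefficients, and it never invokes Bernstein. It works directly with the adapted-basis coefficients $\widehat{\mu}^\perp_j=\langle\mu^\perp,\bbe_j\rangle_{L^2_{e_0}}$ together with the Riesz-basis inequality $\|\mu^\perp\|^2_{L^2_{e_0}}\leq C\sum_{j\geq 5}|\widehat{\mu}^\perp_j|^2$. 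The high modes are controlled \emph{not} by the $C^{39}$ regularity (your $|\hat\mu(q)|\leq L\,q^{-39}$), but by Lemma \ref{decayfourier}, which gives only $|\widehat{\mu}^\perp_j|\lesssim\|\mu\|_{C^1}/q$ from $C^1$ regularity alone. Balancing this against the integrability bound $q^8\|\mu\|_{C^1}^2$ at the truncation scale $q_0=[\|\mu\|_{C^1}^{-1/9}]$ produces an $L^2$-level estimate $\|\mu^\perp\|_{L^2}\lesssim\|\mu\|_{C^1}^{19/18}$, and only then does the $C^{39}$ hypothesis enter: a Sobolev interpolation between $L^2$ and $C^{39}$ (with $j=2$, $k=39$) promotes the $L^2$ bound to $C^1$, yielding the exponent $\frac{19}{18}\cdot\frac{37}{39}=\frac{703}{702}$.

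Two concrete obstacles in your version. First, Bernstein's inequality applies to trigonometric polynomials of degree $N$, while the adapted basis functions $c_q, s_q$ are not trigonometric polynomials (and the projection mixes their ordinary Fourier support), so a Bernstein-type bound for the partial sums in the adapted basis would need to be established, not assumed. Second, if you place the cutoff at the maximal admissible scale $N\sim\|\mu\|_{C^1}^{-1/8}$, the low-mode $C^1$ contribution from $|\widehat{\mu}^\perp_q|\lesssim q^8\|\mu\|_{C^1}^2$ is of order $N^{10}\|\mu\|_{C^1}^2\sim\|\mu\|_{C^1}^{3/4}$, which is worse than $\|\mu\|_{C^1}$ and destroys the estimate; taking $N$ smaller changes the balance, and the combination of prefactors $q^8$, $N^{3/2}$, $N^{-37}$ with the admissibility window $N\leq c\|\mu\|_{C^1}^{-1/8}$ does not produce $703/702$. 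The paper's two-stage architecture — an $L^2$ bound exploiting only the weak $1/q$ tail, followed by Sobolev interpolation against the $C^{39}$ norm — is exactly what avoids this blow-up and is where the regularity threshold $k\geq 39$ is actually consumed.
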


\medskip

\begin{proof}
Let us consider the basis ${\mathcal B}_{e_0}:=\{\bbe_j\}_{j\geq 0}$ of $L^2_{e_0}(\T)$, introduced in \eqref{changenotation1} and \eqref{changenotation2}; moreover, we
denote by $$\V_{e_0}:= \langle \bbe_0, \bbe_{1}, \bbe_{2}, \bbe_{3},  \bbe_4 \rangle$$ the $5$-dimensional space generated by elliptic motions.
Let us decompose
$$
\mu = \mu_{\V_{e_0}} + \mu^{\perp}, 
$$
where $ \mu^{\perp}$ is orthogonal in $L^2_{e_0}(\T)$ to the subspace $\V_{e_0}$ and 
$
\mu_{\V_{e_0}} := \sum_{j=0}^4 a_j\bbe_j \in \V_{e_0}.
$
Using the orthogonality in $L^2_{e_0}(\T)$ and the fact that ${\mathcal B}_{e_0}$ is a basis, we obtain
$$
\| \mu_{\V_{e_0}}\|_{L^2_{e_0}}^2 + \| \mu^{\perp}\|^2_{L^2_{e_0}} = \|\mu\|_{L^2_{e_0}}^2 \leq C\| \mu\|_{C^1}^2,
$$
for some $C=C(e_0,c)$. This implies that $a_j =O_{e_0,c}(\|\mu\|_{C^1})$ for $0\leq j \leq 4$; since the 
functions $\bbe_j$'s are analytic, we obtain
\begin{equation}\label{eq:estimateprojectedmu}
\| \mu_{\V_{e_0}}\|_{C^{k}} \leq C(e_0,c,k) \|\mu\|_{C^1}.
\end{equation}
We claim that 
\begin{equation}\label{lastestimate}
\|\mu^{\perp}\|_{C^1} \leq C(e_0, c, \|\mu\|_{C^k}) \|\mu\|_{C^1}^{1+\d},
\end{equation}
where the above constant depends monotonically on $\|\mu\|_{C^k}$, 
and {$\d$ will turn out to be equal to $1/702$.}
This is enough to complete the proof. In fact, applying Proposition \ref{changingellipse} with $\E_{e_0,c}$ and $\mu_{\V_{e_0}}$, we obtain
an ellipse $\widetilde{\E} = \E_{e_0,c} +\mu_{\widetilde{\E}}$ such that
$$
\|\mu_{\V_{e_0}} - \mu_{\widetilde{\E}}\|_{C^1} \leq C \|\mu_{\V_{e_0}}\|^2_{C^1} \leq C\|\mu\|^2_{C^1},
$$
where the last inequality follows from \eqref{eq:estimateprojectedmu}.
We choose $\overline{\E}:=\widetilde{\E}$; if we consider $\partial \Omega=\overline{\E}+ \overline{\mu}$, then we conclude from Lemma \ref{lemma13} that 
\begin{eqnarray*}
\|\overline{\mu}\|_{C^1} &\leq &C(e_0,c) \|\mu -\mu_{\widetilde{\E}}\|_{C^1} = C(e_0,c) \|\mu_{\,\V_{e_0}} + \mu^{\perp} -\mu_{\widetilde{\E}}\|_{C^1}\\
&\leq& C(e_0,c) \left(\|\mu_{\,\V_{e_0}} - \mu_{\widetilde{\E}} \| + \|\mu^{\perp}\|_{C^1}\right)\\
&\leq& C(e_0, c, \|\mu\|_{C^k})\, \|\mu\|_{C^1}^{1+\d}.
\end{eqnarray*}

\medskip 

Therefore, let us prove \eqref{lastestimate}.
Let us define the Fourier coefficients 
$$
\widehat{\mu}^\perp_{j} := \langle  \mu^{\perp }, \bbe_j \rangle_{e_0} ,
$$
which are clearly zero for $j=0,\ldots, 4$ (due to orthogonality). In particular we have (see for example \cite[Corollary 23]{ADK})
$$\|\mu^{\perp}\|^2_{L^2_{e_0}} \leq C(e_0,c) \sum_{j=5}^{\infty} \left| \widehat {\mu}^\perp_{j} \right|^2.$$
It follows from Lemma \ref{lemmaintegralszero} and
 Remark \ref{ADK-dot-bnd} that  
$$
\left| \widehat {\mu}^\perp_{j} \right| = O_{e_0,c}(q^8\|\mu\|^2_{C^1}).
$$
Fix some positive $\a<1/8$. Choose $q_0=[\|\mu\|^{-\a}_{C^1}]$, where $[\cdot]$ denotes the integer part and $\a>0$ will be determined in the following.
Below $C(e_0,c)$ denotes a constant depending on $e_0$ and $c$. 
Using the above estimates we get  for $5\leq q \leq q_0$:
$$
\left| \widehat {\mu}^\perp_{j} \right| \le  C(e_0,c)q^8\|\mu\|^2_{C^1} \le 
C(e_0,c)\|\mu\|^{2-8\a}_{C^1}.
$$
Then, summing over $5 \le q \le q_0$, we obtain
\[
\sum_{q=5}^{q_0}\ \left| \widehat {\mu}^\perp_{j} \right|^2 \le 
C(e_0,c)\|\mu\|^{4-17\a}_{C^1}.
\]
On the other hand, Lemma \ref{decayfourier} gives  
\[
\left|{\mu}^\perp_{j} \right|^2 \le C(e_0,c) \frac{\|\mu\|^2_{C^1}}{q^2}.
\]
Therefore, summing over $q > q_0$ we conclude that
\[
\sum_{q=q_0+1}^{+\infty}\ \left| \widehat {\mu}^\perp_{j} \right|^2 \le 
C(e_0,c)\|\mu\|^{2+\a}_{C^1}.
\]
Combining the two above estimates and optimizing for $\a$ 
(\ie choosing $\a =1/9$), we conclude that 
$\left| \widehat {\mu}^\perp_{j} \right|\le C(e_0,c)\|\mu\|^{19/18}_{C^1}.$

Now, observe that
$$
\|\mu^{\perp}\|_{C^1} \leq \|D \mu^{\perp}\|_{L^1} + \|D^2 \mu^{\perp}\|_{L^1} \leq \|D \mu^{\perp}\|_{L^2} + \|D^2 \mu^{\perp}\|_{L^2}. 
$$
Using standard Sobolev interpolation inequalities (see for example \cite{GT}): for any $\delta>0$ and any $1\leq j \leq 2$ we have:
$$
\|D^j \mu^{\perp}\|_{L^2} \leq C\left(
\Delta \|\mu^{\perp}\|_{C^k} + \Delta^{-j/(k-j)} \|\mu^{\perp}\|_{L^2}
\right).
$$
Optimizing the above estimate,  we choose $\Delta =\|\mu\|_{C^1}^{703/702}$.

Using the above estimates and the fact that $\|\cdot\|_{L^2_{e_0}}$ is equivalent to $\|\cdot\|_{L^2}$, we conclude that
\eqref{lastestimate} holds, {by taking $\delta=\frac{1}{702}$}.
\end{proof}

\medskip

\subsection{Proof of the Main Theorem} \label{proofmainthm}
First of all, observe that up to applying a rotation and a translation (that do not alter rational integrability, nor the other hypotheses), we can assume that  $\E_0 = \E_{e_0,c}$.

{Let us denote by $\Ell_{\sigma}(\E_{0})$ the set of ellipses whose {Hausdorff distance}
from $\E_0$ is not larger than $\sigma$:
$$
\Ell_{\sigma}(\E_{0}) = \left\{
\E' \subset \R^2: \: {\rm dist}_{H}(\E',\E_0) \leq {\sigma}
\right\},
$$
where $\sigma$ is  sufficiently small (to be determined).}

{Let us denote by ${\mathcal P}_{\sigma}(\E_0)$ 
the set of parameters corresponding to ellipses in 
$\Ell_{\sigma}(\E_{0})$:
$$
{\mathcal P}_{\sigma}(\E_0) := \left\{
(x,y,c,\mu,\theta)  \in \R^2 \times (0,+\infty)^2\times [0,\pi): \; 
\E(x,y,c,\mu,\theta) \in \Ell_{\sigma}(\E_0)
\right\}.
$$
Then, ${\mathcal P}_{\sigma}(\E_0)$ is compact in 
$\R^2 \times (0,+\infty)^2\times [0,\pi)$. Notice that 
the size of this set is independent of $\e$.}\\

Let $\mu$ be a $C^k$ perturbation, with $\|\mu\|_{C^k} < K$ and $\|\mu\|_{C^1}<\e$, and consider the domain given by
$$
\partial \Omega = \E_0 +\mu.
$$

Observe that there exists a constant $M=M(e_0,c,K)$   
such that if {$\E \in \Ell_{\sigma}(\E_{0})$} and 
$\partial\Omega= \E +\widetilde{\mu}$, then
\begin{equation}\label{defEMME}
{\rm dist}_{H}(\E,\partial \Omega) \leq M \|\widetilde{\mu}\|_{C^0}.\\
\end{equation}

For any {$\nu\in {\mathcal P}_\sigma(\E_0)$}, let us 
denote by $\E_\nu$ the corresponding ellipse and by 
$\mu_{\nu}$ the perturbation such that 
$\partial \Omega = \E_\nu +\mu_\nu$. 
Observe that the elliptic coordinate frame corresponding to 
$\E_\nu$ varies analytically with respect to $\nu$; hence, 
$\mu_{\nu}$ also
changes analytically with respect to $\nu$.
In particular, we can assume $\e$  sufficiently small so that 
for any {$\nu\in {\mathcal P}_\sigma(\E_0)$} we have 
$\|\mu_{\nu}\|_{C^k} < 2K$.  \\
The function $\nu \longmapsto \|\mu_\nu\|_{C^1}$ is, therefore, continuous and, 
being {${\mathcal P}_\sigma(\E_0)$} compact, it achieves a minimum at some 
$\nu^*\in {\mathcal P}_\sigma(\E_0)$. 
$$
0\leq \|\mu_{\nu^*}\|_{C^1} \leq \|\mu\|_{C^1} < \e. 
$$

Let us assume that  $\|\mu_{\nu^*}\|_{C^1}\neq 0$ and  apply Lemma \ref{finallemma} to $\E_{\nu^*}$ and $\mu_{\nu^*}$, thus obtaining $\overline{\E}_{\nu^*}$ and $\overline{\mu}_{\nu^*}$, such that
\begin{equation}\label{pre-contradiction}
\|\overline{\mu}_{\nu^*}\|_{C^1} \leq C \|\mu_{\nu^*}\|^{1+\delta}_{C^1} <  \|\mu_{\nu^*}\|_{C^1}
\end{equation}
where we have assumed $\e$ to be sufficiently small. {Notice that as $\|\mu_{\nu^*}\|_{C^1}$
decreases, $\|\overline{\mu}_{\nu^*}\|_{C^1}$ decreases. Therefore, $\e$ is small enough,
$\bar \E$  from Lemma \ref{finallemma} belongs to the set ${\mathcal P}_\sigma(\E_0)$,
which has non-emtpy interior and is independent of $\e$. }

Using the triangle inequality, for sufficiently small  $\e$, we have: 
\begin{eqnarray*}
{\rm dist}_{H}(\E_0,\overline{\E}_{\nu^*}) &\leq&
{\rm dist}_{H}(\E_0, \partial \Omega) +
{\rm dist}_{H}(\partial  \Omega, \overline{\E}_{\nu^*})\\
&\leq& 2M\e \leq \sigma.
\end{eqnarray*}
Hence, $\overline{\E}_{\nu^*} \in \Ell_{\sigma}(\E_{0})$ 
and therefore $\overline{\E}_{\nu^*}  = \E_{\overline{\nu}^*}$ 
for some $\overline{\nu}^* \in {\mathcal P}_\sigma(\E_0)$.
This and \eqref{pre-contradiction} contradict the minimality 
of {${\nu^*}$ in ${\mathcal P}_\sigma(\E_0)$.} 
As a consequence $\mu_{\nu^*}\equiv 0$ and therefore 
{$\partial\Omega \in {\mathcal P}_\sigma(\E_0)$.}  
\qed

\vspace{7 pt}



\appendix 

\section{Parametrizing ellipses}
\label{App:paramellipses}

Let us consider the ellipse 
$$
{\E_{e_0,c}} =\left \{(x,y)\in \R^2:\; \frac{x^2}{a^2} + \frac{y^2}{b^2} = 1\right\}
$$
centered at the origin and with semiaxes of lengths, respectively, $0<b\leq a$; 
in particular, as before, $e_0=\sqrt{1-\frac{b^2}{a^2}} \in [0,1)$ denotes its eccentricity, while $c=\sqrt{a^2-b^2}$ the semi-focal distance.

\medskip 

We want to recall various parametrizations of ellipses that have been mentioned and used in the proofs.

\medskip 

\begin{itemize}

\item {\bf Polar coordinates:}  
 $(r,\f) \in (0,+\infty) \times \R/2\pi\Z$:
$$
\E_{e_0,c}: \quad
\left\{
\begin{array}{l}
x= a  \cos\f \\
y= b  \sin\f.\\
\end{array}
\right.
$$
Observe that this choice parametrizes the ellipse counterclockwise, with $(x(0),y(0))=(a,0).$ 
\\
In these coordinates the radius of curvature of the ellipse is given by
\begin{eqnarray} \label{radiuscurvature}
\rho(\f) &=& \left|\frac{(\dot{x}^2+\dot{y}^2)^{\frac{3}{2}}}{\dot{x}\ddot{y} - \dot{y}\ddot{x}} \right| = 
\frac{(a^2 \sin^2\f + b^2 \cos^2\f)^{\frac{3}{2}}}{ab}\nonumber\\
&=& \frac{a^2}{b} (1-e_0^2 \sin^2 \f)^{\frac{3}{2}}.
\end{eqnarray}

\medskip

\item {\bf Arc-length parametrization:}
$$
\E_{e_0,c}: \; \left\{
\begin{array}{l}
x = x(s)\\
y = y(s)
\end{array}
\right. \quad {\rm for}\; s\in [0, |\E_{e_0,c}|),
$$
where $|\E_{e_0,c}|$ denotes the perimeter of $\E_{e_0,c}$ and we fix, for example, the starting point at $(x(0),y(0))=(a,0)$ and the counterclockwise  orientation. 
In terms of the polar coordinate $\f$ we have:
\begin{eqnarray} \label{arclength}
s(\f) = a \int_0^{\f} \sqrt{1-e_0^2 \sin^2\f}\,d\f,
\end{eqnarray}
from which
\begin{equation}\label{dsdf}
\frac{ds(\f)}{d\f} = a \sqrt{1-e_0^2 \sin^2\f}.
\end{equation}
In particular, the perimeter of $\E_{e_0,c}$ can be computed quite explicitly:
\begin{eqnarray*}
|\E_{e_0,c}| &=& \int_0^{2\pi} \sqrt{a^2 \cos^2\f+ b^2 \sin^2\f}\,d\f =  
a \int_0^{2\pi} \sqrt{1-e_0^2 \sin^2\f}\,d\f \\
&=& 4a \int_0^{\frac{\pi}{2}} \sqrt{1-e_0^2 \sin^2\f}\,d\f =: 4a\, E(e_0),
\end{eqnarray*}
where $E(e_0):= \int_0^{\frac{\pi}{2}} \sqrt{1-e_0^2\sin^2 \f}\,d\f$
is called {\it complete elliptic integral of the second type} (see for instance \cite{Akhiezer}).

\medskip

\item {\bf Elliptic polar coordinates}  See subsection \ref{secellipticpolarcoords}.\\

\item {\bf Lazutkin parametrization:}
Following an idea by Lazutkin in \cite{Lazutkin}, let us introduce the following reparametrization
\begin{equation} \label{Lazutkin}
x_\ell(s) :=    C_\ell^{-1}\, \int_0^s \rho^{-\frac{2}{3}}(\tau) d\tau,
\end{equation}
where $s$ denotes the arc-length parameter, $\rho$ the radius of curvature computed in (\ref{radiuscurvature}) and 
$
C_\ell := \int_0^{|\E_{e_0,c}|} \rho^{-\frac{2}{3}}(\tau) d\tau  
$
is a normalizing factor so that $x_\ell(|\E_{e_0,c}|) =1$ (sometimes it is called the {\it Lazutkin perimeter}).\\

Observe that, using (\ref{radiuscurvature}), (\ref{dsdf}),  and (\ref{Lazutkin}), we obtain $x_\ell$ as a function of the polar angular coordinate  $\f$:
\begin{eqnarray*} 
x_\ell(\f) &= & C_\ell^{-1} \int_0^{\f} \rho^{-\frac{2}{3}}(s(\f)) \frac{ds(\f)}{d\f}\,d\f \nonumber\\
&=& C_\ell^{-1}\, \frac{b^{\frac{2}{3}}}{a^{\frac{1}{3}}}  \int_0^{\f} 
 \frac{d\f}{\sqrt{1-e_0^2 \sin^2 \f}}.
\end{eqnarray*}
In particular,
\begin{equation}\label{dxldf}
\frac{dx_\ell(\f)}{d\f} = \frac{b^{\frac{2}{3}}}{a^{\frac{1}{3}}} 
 \frac{C_\ell^{-1}}{\sqrt{1-e_0^2 \sin^2 \f}}.
\end{equation}

\medskip

\begin{remark}
For any smooth strictly convex domain $\Omega$, let us denote by $|\partial \Omega|$ the perimeter of $\Omega$. Let us consider  the {\it Lazutkin change of coordinates}
$L_\Omega:  [0,|\partial \Omega|) \times [0,\pi]  \longrightarrow  \R/\Z \times [0,\delta]$:
\begin{eqnarray*}
 (s,\f)   &\longmapsto&  \left(x=C^{-1}_\Omega \int_0^s \rho^{-{2/3}}(s)ds, \;
y=4C_\Omega^{-1}\rho^{{1/3}}(s)\ \sin \f/2 \right),
\end{eqnarray*}
where $C_\Omega := \int_0^{|\partial \Omega|} \rho^{-{2/3}} (s)ds$ and   $\delta>0$ is sufficiently small.

In these new coordinates the billiard map becomes very simple (see \cite{Lazutkin}):

\be \label{lazutkin-billiard-map}
f_{L_{\Omega}}(x,y) = \Big( x+y +O(y^3),y + O(y^4) \Big)
\ee

In particular, near the boundary $\{\f=0\} = \{y=0\}$, the billiard map $f_{L_\Omega}$ reduces to
a small perturbation of the integrable map $(x,y)\longmapsto (x+y,y)$.
Using this result and KAM theorem, Lazutkin proved in \cite{Lazutkin}
that if $\partial \Omega$
is sufficiently smooth (smoothness is determined by KAM theorem),
then there exists a positive measure set of caustics (which correspond to KAM invariant curves), which accumulates on
the boundary and on which the motion is smoothly conjugate to a rigid rotation with irrational rotation number.
\end{remark}
\end{itemize}

\smallskip

\section{Elliptic Motions  and a proof of Proposition
\ref{changingellipse} }
\label{App:motions}

We start by studying perturbations of ellipses within the family 
of ellipses. Once enough analytic tools are developed we 
prove Proposition \ref{changingellipse}.  Up to suitable 
translation and rotation, we can assume -- using the parametrization introduced in \eqref{paramellipse} --, that the unperturbed 
ellipse has the form $\E_{e_0,c} = \E(0,0,c, \mu_0, 0)$; in particular, 
its eccentricity is $e_0=1/\cosh \mu_0$.\\

\subsection*{Perturbing by an homothety} Let $\l\in \R$ and consider an homothety of factor $e^\l$. We want to write the dilated/contracted ellipse $\E_{\l} := e^\l \E_{e_0,c}$ as
$$
\E_\l = \E_{e_0,c} + \mu_\l,
$$
which is equivalent to
$$
\E(0,0,e^\l c,\mu_0, 0) = \E(0,0,c,\mu_0 + \mu_\l,0).
$$
Hence, we have to solve the following system of equations: \\
$$
\left\{
\begin{array}{l}
c \cosh(\mu_0 + \mu_\l(\f)) \cos \f = e^\l c \cosh \mu_0 \cos \f_\l \\
c \sinh(\mu_0 + \mu_\l(\f)) \sin \f = e^\l c \sinh \mu_0 \sin \f_\l
\end{array}
\right.
$$
where one should observe that the angle $\f$ changes as well. In particular, $\mu_\l=o(1)$ and $\Delta \f:= \f_\l - \f = o(1)$.
Applying Taylor formula and simplifying, we obtain:  \\
{\footnotesize
\begin{eqnarray*}
&&
\left\{
\begin{array}{l}
\big[\cosh\mu_0 +\sinh\mu_0\, \mu_\l(\f) + o(\l) \big] \cos \f = (1+\l) \cosh \mu_0  [\cos \f - \sin\f \,  \Delta \f]  + o(\l)  \\
\big[\sinh\mu_0 +\cosh\mu_0 \, \mu_\l(\f) + o(\l) \big] \sin \f = (1+\l)  \sinh \mu_0 [\sin \f + \cos\f \,  \Delta \f] + o(\l) 
\end{array}
\right.\\
&&\\
&&
\left\{
\begin{array}{l}
\sinh\mu_0\, \cos \f\, \mu_\l + \cosh\mu_0 \,\sin\f \, \Delta\f = \l \cosh\mu_0 \,\cos\f + o(\l)\\
\cosh\mu_0 \,\sin\f \, \mu_\l - \sinh\mu_0\, \cos \f\, \Delta\f = \l \sinh\mu_0 \,\sin\l\f + o(\l).
\end{array}
\right.\\
\end{eqnarray*}
}
Therefore (we are  interested in $\mu_\l$):
\begin{eqnarray}\label{homothety}
\mu_\l(\f) &=&
\frac{\l\, \sinh\mu_0\cosh\mu_0}{(\sinh^2\mu_0\, \cos^2 \f +\cosh^2\mu_0 \,\sin^2\f )}  + o(\l) \nonumber\\
&=& 
\frac{\l\, \sqrt{1-e_0^2}}{1- e_0^2 \cos^2\f }   + o(\l). \\\nonumber
\end{eqnarray}

\medskip

\subsection*{Perturbing by a translation}
Let $\tau=(\tau_x,\tau_y)\in \R^2$ and consider a translation by $\tau$. We want to write the  translated ellipse  $\E_{\tau}$ as
$$
\E_\tau = \E_{e_0,c} + \mu_\tau,
$$
which is equivalent to
$$
\E(\tau_x,\tau_y, c,\mu_0, 0) = \E(0,0,c,\mu_0 + \mu_\tau,0).
$$
Hence, we have to solve the following system of equations:
$$
\left\{
\begin{array}{l}
c \cosh(\mu_0 + \mu_\tau(\f)) \cos \f = \tau_x + c \cosh \mu_0 \cos \f_\tau \\
c \sinh(\mu_0 + \mu_\tau(\f)) \sin \f = \tau_y+  c \sinh \mu_0 \sin \f_\tau
\end{array}
\right.
$$
where one should observe that the angle $\f$ changes as well. 
In particular, $\mu_\tau=o(1)$ and $\Delta \f:= \f_\tau - \f = o(1)$.
Applying Taylor formula and simplifying, we obtain:
{\footnotesize
\begin{eqnarray*}
&&
\left\{
\begin{array}{l}
\big[\cosh\mu_0 +\sinh\mu_0\, \mu_\tau + o(\|\tau\|) \big] \cos \f = 
\frac{\tau_x}{c} + \cosh \mu_0  [\cos \f - \sin\f \,  \Delta \f  + o(\|\tau\|) ] \\
\big[\sinh\mu_0 +\cosh\mu_0 \, \mu_\tau + o(\|\tau\|) \big] \sin \f = 
\frac{\tau_y}{c} + \sinh \mu_0 [\sin \f + \cos\f \,  \Delta \f  + o(\|\tau\|) ]
\end{array}
\right.\\
&&\\
&&
\left\{
\begin{array}{l}
\sinh\mu_0\, \cos \f\, \mu_\tau + \cosh\mu_0 \,\sin\f \, \Delta\f = \frac{\tau_x}{c}    + o(\|\tau\|)\\
\cosh\mu_0 \,\sin\f \, \mu_\tau - \sinh\mu_0\, \cos \f\, \Delta\f =  \frac{\tau_y}{c}  + o(\|\tau\|).
\end{array}
\right.\\
\end{eqnarray*}
}
Therefore:
{\footnotesize
\begin{eqnarray} \label{translation}
\mu_\tau(\f) &=&
\frac{1}{(\sinh^2\mu_0\, \cos^2 \f +\cosh^2\mu_0 \,\sin^2\f )}  \left[
\frac{\tau_x}{c}\sinh\mu_0\, \cos \f +  \frac{\tau_y}{c} \cosh\mu_0 \,\sin\f
\right]+ o(\|\tau\|)\nonumber\\
&=& \frac{e_0}{c(1-e_0^2 \cos^2\f)} \left[
\tau_x \sqrt{1-e_0^2} \cos \f + \tau_y \sin \f
\right]+ o(\|\tau\|).
\end{eqnarray}
}

\medskip

\subsection*{Perturbing by a rotation}
Let $\theta \in [0,2\pi)$ and consider a rotation by $\theta$ (counterclockwise); we denote 
$$
R_\theta := \left(
\begin{array}{cc}
\cos \th  & - \sin \th\\
\sin \th & \cos \th
\end{array}
\right).
$$
We are interested in the rotated ellipse $\E_{\th}$ and we want to write it (in elliptic coordinates) as
$$
\E_\th = \E_{e_0,c} + \mu_\th,
$$
which is equivalent to
$$
\E(0,0, c,\mu_0, \th) =   \E(0,0,c,\mu_0 + \mu_\th,0).
$$
Hence, we have to solve the following system of equations:
\begin{eqnarray*}
\left(
\begin{array}{c}
c \cosh(\mu_0 + \mu_\th(\f)) \cos \f \\
c \sinh(\mu_0 + \mu_\th(\f)) \sin \f 
\end{array}
\right)
= R_\th 
\left(
\begin{array}{c}
c \cosh \mu_0 \cos \f_\th\\
c \sinh \mu_0 \sin \f_\th
\end{array}
\right)
\end{eqnarray*}
where one should observe that the angle $\f$ changes as well. In particular, $\mu_\th=o(1)$ and $\Delta \f:= \f_\th - \f = o(1)$.
Applying Taylor formula and simplifying, we obtain:
{\footnotesize
\begin{eqnarray*}
\left(
\begin{array}{c}
\big[\cosh\mu_0 +  \sinh \mu_0\, \mu_\th \big] \cos \f \\
\big[\sinh\mu_0 +  \cosh\mu_0 \, \mu_\th\big] \sin \f 
\end{array}
\right)
&=& 
\left(
\begin{array}{cc}
1 & -\th\\
\th & 1
\end{array}
\right)
\left(
\begin{array}{c}
\cosh \mu_0 [\cos \f - \sin\f \,  \Delta \f  + o(\th) ]\\
\sinh \mu_0 [\sin \f + \cos\f \,  \Delta \f  + o(\th) ]
\end{array}
\right)\\
\end{eqnarray*}
}
which implies
{\footnotesize
\begin{eqnarray*}
\left(
\begin{array}{c}
\sinh \mu_0 \,\cos \f \\
\cosh\mu_0  \,\sin \f 
\end{array}
\right)\,\mu_\th
-
\left(
\begin{array}{c}
\cosh \mu_0 \,\sin \f \\
\sinh\mu_0  \,\cos \f 
\end{array}
\right)\,\Delta\f
  &=& 
\left(
\begin{array}{c}
-\sinh \mu_0 \,\sin \f \\
\cosh\mu_0  \,\cos \f 
\end{array}
\right)\, \th + o(\th).
\end{eqnarray*}
}
Hence, we conclude
\begin{eqnarray}\label{rotation}
\mu_\th &=&
\frac{\theta\,\left[
\sin\f\cos\f (\cosh^2 \mu_0-\sinh^2\mu_0)
\right]}{(\sinh^2\mu_0\, \cos^2 \f +\cosh^2\mu_0 \,\sin^2\f )}  + o(\th)\nonumber\\
&=& \frac{\theta\, e_0^2}{2(1-e_0^2\cos^2\f)} \sin2\f  
+ o(\th).
\end{eqnarray}

\medskip

\subsection*{Perturbing by an hyperbolic rotation}
 Let us consider the matrix 
$$
\Lambda =\Lambda(\l):= \left(
\begin{array}{cc}
e^\l  & 0\\
0  & e^{-\l}
\end{array}
\right) \qquad \mbox{with }\; \l\in \R;
$$
we are interested in the  ellipse $\E_{\Lambda}$ obtained by applyting this tranformation to $\E_0$ and we want to write it (in elliptic coordinates) as
$$
\E_\Lambda = \E_{e_0,c} + \mu_\Lambda,
$$
which is equivalent to
$$
\E(0,0, c,\mu_0, \th) =   \E(0,0,c,\mu_0 + \mu_\Lambda,0).
$$
Hence, we have to solve the following system of equations:
\begin{eqnarray*}
\left(
\begin{array}{c}
c \cosh(\mu_0 + \mu_\Lambda(\f)) \cos \f \\
c \sinh(\mu_0 + \mu_\Lambda(\f)) \sin \f 
\end{array}
\right)
= \Lambda
\left(
\begin{array}{c}
c \cosh \mu_0 \cos \f_\Lambda\\
c \sinh \mu_0 \sin \f_\Lambda
\end{array}
\right)
\end{eqnarray*}
where one should observe that the angle $\f$ changes as well. In particular, $\mu_\Lambda=o(1)$ and $\Delta \f:= \f_\Lambda - \f = o(1)$.
Applying Taylor formula and simplifying, we obtain:
{\footnotesize
\begin{eqnarray*}
\left(
\begin{array}{c}
\big[\cosh\mu_0 +  \sinh \mu_0\, \mu_\Lambda \big] \cos \f \\
\big[\sinh\mu_0 +  \cosh\mu_0 \, \mu_\Lambda\big] \sin \f 
\end{array}
\right)
&=& 
\left( 
\begin{array}{cc}
1+\l &0\\
0 & 1-\l
\end{array}
\right)
\left(
\begin{array}{c}
\cosh \mu_0 [\cos \f - \sin\f \,  \Delta \f  ]\\
\sinh \mu_0 [\sin \f + \cos\f \,  \Delta \f ]
\end{array}
\right) +   o(\l)\\
\end{eqnarray*}
}
which implies
{\footnotesize
\begin{eqnarray*}
\left(
\begin{array}{c}
\sinh \mu_0 \,\cos \f \\
\cosh\mu_0  \,\sin \f 
\end{array}
\right)\,\mu_\Lambda
-
\left(
\begin{array}{c}
\cosh \mu_0 \,\sin \f \\
\sinh\mu_0  \,\cos \f 
\end{array}
\right)\,\Delta\f
  &=& 
\l\,\left(
\begin{array}{c}
\cosh \mu_0 \,\cos \f \\
- \sinh\mu_0  \,\sin \f 
\end{array}
\right) + o(\l).
\end{eqnarray*}
}
Hence, we conclude
{\footnotesize 
\begin{eqnarray} \label{hyprot}
\mu_\Lambda &=&
\frac{\l \sinh\mu_0\cosh \mu_0 (\cos^2\f -\sin^2\f) }{(\sinh^2\mu_0\, \cos^2 \f +\cosh^2\mu_0 \,\sin^2\f )} \nonumber\\
&=&
\frac{\l}{1-e_0^2\cos^2\f}   \cos 2\f 
+ o(\l).\\ \nonumber
\end{eqnarray}
}

\medskip

\subsection*{Perturbation of Ellipses and Proof of Proposition \ref{changingellipse}}
Let us  first start with the following lemma, which is similar to 
\cite[Lemma 7]{ADK}.\\

\begin{lemma}\label{lemma13}
Let  $\E_{e_0,c} = \E(0,0,c,\mu_0,0)$ be an ellipse of eccentricity $e_0=1/\cosh \mu_0$ and semi-focal distance $c$, and
suppose that $\Omega$ is a  perturbation of $\E_{e_0,c}$,  which can be written (in the elliptic coordinate  frame $(\mu,\f)$ associated to $\E_{e_0,c}$) as
$
\Omega = \E_{e_0,c} + \mu_\Omega(\f).
$
Consider another ellipse $\overline{\E}$ sufficiently close to $\E_{e_0,c}$, which can be written (in elliptic coordinates frame associated to $\E_{e_0,c}$)  as
$$
\overline{\E} = \E_{e_0,c} + \mu_{\overline{\E}}.
$$
If $\overline{\E}$ is sufficiently close to $\E_{e_0,c}$, we can write (in the elliptic coordinate frame $(\overline{\mu},\overline{\f})$ associated to $\overline{\E}$)
$\Omega = \overline{\E} + \overline{\mu}_\Omega(\overline{\f})$, for some function $\overline{\mu}_\Omega$. Then, there exists $C=C(e_0,c)$ such that for every $\f\in [0,2\pi)$ we have
\begin{equation}\label{stima1}
|\mu_\Omega (\f) - (\mu_{\overline{\E}}(\f) + \overline{\mu}_\Omega(\f) )| \leq C \|\mu_{\overline{\E}}\|_{C^1} \| \| \mu_\Omega - \mu_{\overline{\E}}\|_{C^1}.
\end{equation}
Moreover,  for any $C'>1$, if $\overline{\E}$ is sufficiently close to $\E_{e_0,c}$ then we have
\begin{equation}\label{stima2}
\frac{1}{C'} \|\mu_\Omega - \mu_{\overline{\E}}\|_{C^1} \leq \|\overline{\mu}_{\Omega}\|_{C^1} \leq
{C'} \|\mu_\Omega - \mu_{\overline{\E}}\|_{C^1}.
\end{equation}
\end{lemma}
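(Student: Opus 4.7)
The plan is to compare the two elliptic coordinate frames directly. Let $(\mu,\varphi)$ denote the elliptic coordinates associated to $\E_{e_0,c}$ and $(\overline{\mu},\overline{\varphi})$ those associated to $\overline{\E}$. Since each coordinate frame is determined by the parameters $(x_0,y_0,c,\mu_0,\theta)$ of its reference ellipse, and these parameters depend analytically on the ellipse, there is a smooth change of coordinates $\Phi:(\mu,\varphi)\mapsto(\overline{\mu},\overline{\varphi})$, defined on a neighborhood of $\E_{e_0,c}$, such that $\Phi$ is the identity when $\overline{\E}=\E_{e_0,c}$. A direct parameter-by-parameter inspection (using the explicit formulas for translations, rotations, hyperbolic rotations and homotheties from Appendix \ref{App:motions}) gives
\[
\Phi = \mathrm{Id} + O_{e_0,c}(\|\mu_{\overline{\E}}\|_{C^1}) \qquad \text{in } C^1.
\]

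First I would exploit this to prove (i). By definition, $\overline{\E}$ corresponds to $\{(\mu_0+\mu_{\overline{\E}}(\varphi),\varphi)\}$ in $\E_{e_0,c}$-coordinates and to $\{(\overline{\mu}_0,\overline{\varphi})\}$ in $\overline{\E}$-coordinates, so $\Phi(\mu_0+\mu_{\overline{\E}}(\varphi),\varphi)=(\overline{\mu}_0,\psi(\varphi))$ for some diffeomorphism $\psi$ of the circle with $\psi(\varphi)-\varphi=O(\|\mu_{\overline{\E}}\|_{C^1})$ in $C^1$. Analogously, $\Omega$ corresponds to $\{(\mu_0+\mu_\Omega(\varphi),\varphi)\}$ in $\E_{e_0,c}$-coordinates; applying $\Phi$ and writing $\Phi=(\Phi_1,\Phi_2)$, one obtains
\[
\overline{\mu}_0+\overline{\mu}_\Omega(\widetilde\varphi(\varphi)) = \Phi_1(\mu_0+\mu_\Omega(\varphi),\varphi), \qquad
\widetilde\varphi(\varphi) = \Phi_2(\mu_0+\mu_\Omega(\varphi),\varphi).
\]
Taylor-expanding $\Phi_1$ in the $\mu$-variable around $\mu_0+\mu_{\overline{\E}}(\varphi)$, and using $\partial_\mu\Phi_1 = 1 + O(\|\mu_{\overline{\E}}\|_{C^1})$, yields
\[
\overline{\mu}_\Omega(\widetilde\varphi(\varphi)) = \bigl(\mu_\Omega(\varphi)-\mu_{\overline{\E}}(\varphi)\bigr)\bigl(1 + O(\|\mu_{\overline{\E}}\|_{C^1})\bigr).
\]
The missing step is to replace $\widetilde\varphi(\varphi)$ by $\varphi$: since $\widetilde\varphi(\varphi)-\varphi = (\psi(\varphi)-\varphi) + O(\|\mu_\Omega-\mu_{\overline{\E}}\|_{C^0}) = O(\|\mu_{\overline{\E}}\|_{C^1})$, and $\overline{\mu}_\Omega$ itself has $C^1$-norm of order $\|\mu_\Omega-\mu_{\overline{\E}}\|_{C^1}$ (a fact established in (ii)), a mean-value estimate gives
\[
|\overline{\mu}_\Omega(\widetilde\varphi(\varphi)) - \overline{\mu}_\Omega(\varphi)|\leq C\|\mu_{\overline{\E}}\|_{C^1}\,\|\mu_\Omega-\mu_{\overline{\E}}\|_{C^1}.
\]
Combining the two displays produces (\ref{stima1}).

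For (ii), I would use the same change of coordinates: differentiating the identity $\overline{\mu}_\Omega(\widetilde\varphi(\varphi)) = \Phi_1(\mu_0+\mu_\Omega(\varphi),\varphi) - \overline{\mu}_0$ with respect to $\varphi$ and inverting the relation $\widetilde\varphi=\Phi_2(\cdot,\cdot)$ (whose Jacobian is $1+O(\|\mu_{\overline{\E}}\|_{C^1} + \|\mu_\Omega-\mu_{\overline{\E}}\|_{C^1})$ close to $1$), together with the identity $\partial_\mu\Phi_1,\partial_\varphi\Phi_2 = 1 + O(\|\mu_{\overline{\E}}\|_{C^1})$ and $\partial_\varphi\Phi_1,\partial_\mu\Phi_2 = O(\|\mu_{\overline{\E}}\|_{C^1})$, yields both a two-sided bound on $\|\overline{\mu}_\Omega\|_{C^1}$ in terms of $\|\mu_\Omega-\mu_{\overline{\E}}\|_{C^1}$; for $\overline{\E}$ sufficiently close to $\E_{e_0,c}$ the multiplicative constants become arbitrarily close to $1$, giving the prescribed $C'$.

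The main technical obstacle is getting the correct multiplicative structure in (i): a naive application of the mean value theorem on $\Phi_1$ would give an error proportional to $\|\mu_{\overline{\E}}\|_{C^1} + \|\mu_\Omega-\mu_{\overline{\E}}\|_{C^1}$, whereas the claim requires the product. The key is to expand $\Phi_1$ precisely at the point $(\mu_0+\mu_{\overline{\E}}(\varphi),\varphi)$ lying on $\overline{\E}$ (where $\Phi_1$ equals $\overline{\mu}_0$ exactly and $\partial_\mu\Phi_1$ differs from $1$ by $O(\|\mu_{\overline{\E}}\|_{C^1})$), so that the two small factors multiply rather than add; the auxiliary factor $\|\mu_{\overline{\E}}\|_{C^1}$ needed to bootstrap $\widetilde\varphi\rightsquigarrow\varphi$ must also be dug out of the smallness of $\Phi-\mathrm{Id}$ rather than of $\overline{\mu}_\Omega$.
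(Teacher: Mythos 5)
Your proposal follows essentially the same route as the paper's proof: introduce the change of coordinates $\Phi$ between the two elliptic frames, observe that $\Phi - \mathrm{Id}$ is $O(\|\mu_{\overline{\E}}\|_{C^1})$-small in $C^1$, expand the first component around the point $(\mu_0+\mu_{\overline{\E}}(\f),\f)$ lying on $\overline{\E}$ to get the multiplicative smallness, establish \eqref{stima2} first and feed the resulting derivative bound on $\overline{\mu}_\Omega$ into the mean-value estimate that turns $\widetilde\f$ into $\f$. The paper writes $\Phi$ in the explicit normalized form $\overline{\mu}(\mu,\f)=\overline{\mu}_0+[\mu-\mu_0-\mu_{\overline{\E}}(\f)](1+\rho_\mu)$, $\overline{\f}(\mu,\f)=\f+\rho_\f$, which is exactly your Taylor expansion in disguise, so the two arguments coincide.
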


\bigskip

\begin{proof}
Let  
$$\overline{\E} = \E(\overline{x}_0, \overline{y}_0, \overline{c}, \overline{\mu}_0, \overline{\th}) = \E_{e_0,c} + \mu_{\overline{\E}}(\overline{\f}).\\$$ 

Consider the analytic change of coordinates between the coordinate frame $(\mu,\f)$ associated to $\E_{e_0,c}$ and the coordinate frame $(\overline{\mu},\overline{\f})$ associated to $\E$; we have:
\begin{equation} \label{changecoords}
\left\{
\begin{array}{l}
\overline{\mu}(\mu,\f) = \overline{\mu}_0+ \big[\mu - \mu_0-\mu_{\overline{\E}}(\f)\big] (1+ \rho_\mu(\mu-\mu_0,\f)) \\
\overline{\f}(\mu,\f)= \f+ \rho_\f(\mu-\mu_0,\f),
\end{array}
\right.
\end{equation}
where $\rho_\mu$ and $\rho_\f$ are analytic functions which are $C_1 \|\mu_{\overline{\E}}\|_{C^r}$-small in any $C^r$-norm, where $C_1=C_1(e_0,c, r)$. Observe that
$\overline{\mu}(\mu_0+\mu_{\overline{\E}}(\f)) \equiv \overline{\mu}_0$.\\

Let us observe the following facts:\\

\begin{itemize}
\item It follows from \eqref{changecoords} that
$$
\overline{\mu}_0+\overline{\mu}_\Omega\left(
\overline{\f}(\mu_0+\mu_\Omega(\f),\f) \right) = \overline{\mu} \left(
\mu_0+\mu_{\Omega}(\f),\f
\right).
$$
Taking the derivatives on both sides and using \eqref{changecoords} we obtain:
\begin{eqnarray*}
&&\overline{\mu}_{\Omega}'(\overline{\f}(\mu_0+\mu_\Omega(\f),\f)) \left[ 1+ \frac{\partial \rho_\f}{\partial \mu} (\mu_\Omega(\f),\f) \, \mu_\Omega'(\f) +
\frac{\partial \rho_\f}{\partial \f} (\mu_\Omega(\f),\f) \right]\\
&&\; =\;
\mu_{\Omega}'(\f)
\left[1+ \rho_\mu (\mu_\Omega(\f),\f) +  
\left(
\mu_\Omega(\f)-\mu_{\overline{\E}}(\f)
\right) \frac{\partial \rho_\mu}{\partial \mu} (\mu_\Omega(\f),\f)
\right]\, \\
&& \quad
-\; \mu_{\overline{\E}}'(\f) \left[1+ \rho_{\mu}(\mu_{\Omega}(\f), \f) \right] 
 +\; \left[\mu_\Omega(\f) - \mu_{\E}(\f) \right] 
\frac{\partial\rho_\mu}{\partial \f} (\mu_{\Omega}(\f),\f) \\
&&\;=\;
\left( \mu_{\Omega}'(\f) - \mu_{\overline{\E}}'(\f) \right)
\left[1+ \rho_\mu (\mu_\Omega(\f),\f)  \right] \\
&& \quad + \;
\left( \mu_{\Omega}(\f) - \mu_{\overline{\E}}(\f) \right)
\left[
\frac{\partial\rho_\mu}{\partial \mu} (\mu_{\Omega}(\f),\f)\mu_{\Omega}'(\f) +
\frac{\partial\rho_\mu}{\partial \f} (\mu_{\Omega}(\f),\f)
\right].
\end{eqnarray*}
Hence:
\begin{eqnarray}\label{eq:from-muto-mubar}
\overline{\mu}_{\Omega}'(\overline{\f}(\mu_0+\mu_\Omega(\f),\f)) &=& 
\frac{O_{e_0,c}(\|  \mu_{\Omega} - \mu_{\overline{\E}}\|_{C^1})  \left[1+ O_{e_0,c}(\|\mu_{\overline{\E}}\|_{C^1})
\right]
}
{1+ O_{e_0,c}(\|\mu_{\overline{\E}}\|_{C^1})},
\end{eqnarray}
where $O_{e_0,c}(\cdot)$ means that  {its absolute value 
is bounded by absolute value of $(\cdot)$ and a 
constant which } depends on $e_0$ and $c$. 
\\

\item 
Let  us denote by $\overline{\f}_\Omega (\f):= \overline{\f}(\mu_0+\mu_\Omega(\f),\f)$; it follows from \eqref{changecoords} that it is a diffeomorphism
and 
$$
\overline{\f}_\Omega'(\f) = 1+ O_{e_0,c}(\|\mu_{\overline{\E}}\|_{C^1}).
$$
In particular:
\begin{eqnarray*}
\overline{\mu}_\Omega' (\overline{\f})
&= &
\left(
\overline{\mu}_\Omega \circ \overline{\f}_\Omega \circ \overline{\f}_\Omega^{-1}
\right)'  (\overline{\f}) \\
&=&
\overline{\mu}_{\Omega}'(   
\overline{\f}_\Omega (\f)) \cdot \overline{\f}_\Omega'(\f)
\cdot \left(   \overline{\f}_\Omega^{-1} \right)'(\overline{\f}
).
\end{eqnarray*}
Along with \eqref{eq:from-muto-mubar}, this implies \eqref{stima2}.

\item Moreover, using that $\overline{\f}(\mu_0+\mu_\Omega(\f),\f) - \f = 
O_{e_0,c}(\|\mu_{\overline{\E}}\|_{C^0})
$ we obtain:
\begin{eqnarray*}
\overline{\mu}_\Omega\left(
\overline{\f}(\mu_0+\mu_\Omega(\f),\f) \right) - \overline{\mu}_\Omega(\f) &=&
\int_\f^{\overline{\f}(\mu_0+\mu_\Omega(\f),\f)} \overline{\mu}_{\Omega}' (t)dt \\
&=& O_{e_0,c}(\|  \mu_{\Omega} - \mu_{\overline{\E}}\|_{C^1}\|\mu_{\overline{\E}}\|_{C^1}).\\
\end{eqnarray*}

\item 
Since
$$
\Omega= \E_{e_0,c} + \mu_\Omega(\f)  = \overline{\E} + \overline{\mu}_\Omega(\overline{\f}),
$$
then we have:
\begin{eqnarray*}
\overline{\mu}_0 + \overline{\mu}_{\Omega}(\overline{\f}_{\Omega}(\f)) &=& \overline{\mu} \left(
\mu_0+\mu_\Omega(\f),\f
\right) \\
&=& 
\overline{\mu}_0+ \big[\mu_\Omega(\f) -\mu_{\overline{\E}}(\f)\big] (1+ \rho_\mu(\mu_\Omega(\f),\f));
\end{eqnarray*}
therefore
\begin{eqnarray*}
\overline{\mu}_{\Omega}(\overline{\f}_{\Omega}(\f)) - \left( \mu_\Omega(\f) -\mu_{\overline{\E}}(\f)\right) &=&
 \left( \mu_\Omega(\f) -\mu_{\overline{\E}}(\f)\right) \rho_\mu(\mu_\Omega(\f),\f) \\
 &=& O_{e_0,c}(\|  \mu_{\Omega} - \mu_{\overline{\E}}\|_{C^0}\|\mu_{\overline{\E}}\|_{C^0}).
\end{eqnarray*}
\end{itemize}

\bigskip

Summarizing all of the above information, we get:
\begin{eqnarray*}
\overline{\mu}_{\Omega}({\f}) -  \mu_\Omega(\f) + \mu_{\overline{\E}}(\f) 
&=& 
\big[\overline{\mu}_{\Omega}(\overline{\f}_{\Omega}(\f))   -  \mu_\Omega(\f) + \mu_{\overline{\E}}(\f) \big] + 
\big[
\overline{\mu}_{\Omega}(\overline{\f}_{\Omega}(\f))  - \overline{\mu}_{\Omega}({\f}) 
\big] \\
&=& 
O_{e_0,c}(\|  \mu_{\Omega} - \mu_{\overline{\E}}\|_{C^1}\|\mu_{\overline{\E}}\|_{C^1}),
\end{eqnarray*}
and this concludes the proof of \eqref{stima1}.  
\end{proof}

\bigskip

Now we are  ready to prove Proposition \ref{changingellipse}.

\begin{proof}({Proposition \ref{changingellipse}}) 
We use the notation introduced in \eqref{trans1}-\eqref{funchyprot} and proven in the first part of this section. 
Moreover, since we will be working with  
elliptic coordinate frames associated to different ellipses $\E_k$, we will adopt the convention 
to denote functions with a superscript $^{(k)}$,  when we consider them  with respect to the angle 
associated to the ellipse $\E_k$.\\

Let us denote $\Omega=\E_{e_0,c}+\mu^{(0)}$.
We consider different steps of approximation.
\begin{itemize}
\item[1)] Let us now consider the ellipse $\E_1$ obtained by translating $\E_{e_0,c}$ by a vector  
$$\tau=\left( \frac{a_1\, c}{e_0 \sqrt{1-e_0^2}}, \frac{b_1\,c}{e_0} \right).$$
Let $\mu^{(0)}_{\E_1}$ such that $\E_1=\E_{e_0,c}+\mu^{(0)}_{\E_1}$
 and $\mu^{(1)}_1$ be such that $\Omega=\E_1+\mu^{(1)}_1$.
It follows from \eqref{translation} that
\begin{equation}\label{estimatetrans}
\left\| \mu^{(0)}_{\E_1}  - ({a_1 e^{(0)}_{\tau1} + b_1 e^{(0)}_{\tau2}}) \right\|_{C^1} = O_{e_0,c} (a_1^2+b_1^2).
\end{equation}
Then, using Lemma \ref{lemma13}  and \eqref{estimatetrans} we obtain
\begin{eqnarray*}
\left\| \mu_1^{(0)} -  ({a_0 e^{(0)}_{h} + a_2 e^{(0)}_{hr} + b_2 e^{(0)}_r}) \right\|_{C^1} &\leq&
\| \mu_1^{(0)} - (\mu^{(0)}- \mu^{(0)}_{\E_1})\|_{C^1} \\
&+&
\left\| \mu^{(0)}_{\E_1}  - ({a_1 e^{(0)}_{\tau1} + b_1e_{\tau2}^{(0)}})  \right\|_{C^1}\nonumber \\
&=& O_{e_0,c} \left( \| \mu^{(0)}\|^2_{C^1} \right)\!;  
\end{eqnarray*}
in particular  we have used that 
$\|\mu^{(0)}_{\E_1}\|_{C^1} = O_{e_0,c}(\sqrt{a_1^2+b_1^2}) = O_{e_0,c}(\|\mu^{(0)}\|_{C^1})$.
Let us denote $\overline{\f}_1= \overline{\f}_1(\f)$ the angle associated to $\E_1$; it follows from computations similar to \eqref{translation} that
$$\|\overline{\f}_1- \f\|_{C^1}= O_{e_0,c}\left(\sqrt{a_1^2+b_1^2}\right).$$ 
Then, we conclude that:
\begin{eqnarray}
\left\| \mu^{(1)}_1 -  ({a_0 e^{(1)}_{h} + a_2 e^{(1)}_{hr}+ b_2 e^{(1)}_r})\right\|_{C^1}
&=& O_{e_0,c} \left( \| \mu^{(0)}\|^2_{C^1} \right).  \label{primastima} \\ \nonumber
\end{eqnarray}

\medskip

\item[2)] Let us consider the dilated/contracted ellipse 
$$\E_2= e^{\frac{a_0}{\sqrt{1-e_0^2}}} \E_1;$$
 let $\mu^{(1)}_{\E_2}$ be such that $\E_2=\E_1+\mu^{(1)}_{\E_2}$  and  $\mu^{(2)}_2$  such that $\Omega=\E_2+\mu^{(2)}_2$.
It follows from \eqref{homothety} that
\begin{equation}\label{estimatehom}
\left\| \mu^{(1)}_{\E_2} - {{a_0}e_{h}^{(1)}} \right\|_{C^1} = O_{e_0,c} (a_0^2).
\end{equation}
Then, proceeding as above and using Lemma \ref{lemma13},  \eqref{primastima} and \eqref{estimatehom}, we obtain
\begin{eqnarray*}
\left\| \mu_2^{(1)} -   ({a_2 e^{(1)}_{hr}+ b_2 e^{(1)}_r}) \right\|_{C^1} &\leq&
\| \mu_2^{(1)} - (\mu_1^{(1)}- \mu^{(1)}_{\E_2})\|_{C^1} \\
&+&
\left\| \mu^{(1)}_1 -  ({a_0 e^{(1)}_{h} + a_2 e^{(1)}_{hr}+ b_2 e^{(1)}_r})\right\|_{C^1}\\
&+&
\left\| \mu^{(1)}_{\E_2}  - {{a_0 e^{(1)}_{h} }} \right\|_{C^1}\nonumber \\
&=& O_{e_0,c} \left( \| \mu^{(0)}\|^2_{C^1} \right)\!;  
\end{eqnarray*}
Let us denote $\overline{\f}_2= \overline{\f}_2(\overline{\f}_1)$ the angle associated to $\E_2$; it follows from computations similar to \eqref{homothety} that
$$\|\overline{\f}_2- \overline{\f}_1\|_{C^1}= O_{e_0,c}\left(a_0\right).$$ 
Then, we conclude that:
\begin{eqnarray}
\left\| \mu_2^{(2)} -   ({a_2 e^{(2)}_{hr}+ b_2 e^{(2)}_r}) \right\|_{C^1}
&=& O_{e_0,c} \left( \| \mu^{(0)}\|^2_{C^1} \right).  \label{secondastima}\\ \nonumber
\end{eqnarray}

\medskip

\item[3)] Let us consider the rotated ellipse 
$$\E_3= R_{\frac{2 a_2}{e_0^2} } \E_2;$$
let $\mu^{(2)}_{\E_3}$ be such that $\E_3=\E_2+\mu^{(2)}_{\E_3}$ and  let $\mu^{(3)}_3$ be such that $\Omega=\E_3+\mu^{(3)}_3$.
It follows from \eqref{rotation} that
\begin{equation}\label{estimaterot}
\left\| \mu^{(2)}_{\E_3} - {b_2 e_r^{(2)}} \right\|_{C^1}=O_{e_0,c} (b_2^2).
\end{equation}
Proceeding as above (Lemma \ref{lemma13} and similar estimates) we get:
\begin{eqnarray*}
\left\| \mu^{(3)}_3 - {a_2 e_{hr}^{(3)} } \right\|_{C^1} &=&
O_{e_0,c} \left( \| \mu^{(0)}\|^2_{C^1} \right).\\
\end{eqnarray*}

\medskip

\item[4)] Finally, let us consider the ellipse obtained by means of an hyperbolic rotation $\Lambda(a_2)$:
$$\E_4= \Lambda(a_2)\, \E_3.$$
Let $\mu^{(3)}_{\E_4}$ such that $\E_4=\E_3+\mu^{(3)}_{\E_4}$ and let 
$\mu^{(4)}_4$ be such that $\Omega=\E_4+\mu^{(4)}_4$.
It follows from \eqref{hyprot} that
$$
\left\| \mu^{(3)}_{\E_4} - {a_2 e_{hr}^{(3)}} \right\|_{C^1} = O_{e_0,c}(a_2^2).
$$

In particular, proceeding as above, we conclude also in this case that
\begin{eqnarray*}
\| \mu_4^{(4)}\|_{C^1}
&=& O_{e_0,c} \left( \| \mu^{(0)}\|^2_{C^1} \right).\\
\end{eqnarray*}
\end{itemize}

\medskip

To conclude the proof, we denote $\widetilde{\E}:=\E_4$ and we consider $\mu_{\widetilde{\E}}$ such that 
$
\widetilde{\E}=\E_{e_0,c} +\mu_{\widetilde{\E}}.
$
It follows from Lemma \ref{lemma13} (second part of the statement) that
$$
\| \mu^{(0)} - \mu^{(0)}_{\widetilde{\E}}\|_{C^1} = O_{e_0,c}\left(
\| \mu_4^{(4)}\|_{C^1}
\right) = O_{e_0,c} \left( \| \mu^{(0)}\|^2_{C^1} \right),
$$
and this concludes the proof of the proposition.
\end{proof}

\medskip

\section{Analytic extensions and their singularities} \label{AppendixSingularities}

\subsection{Proof of Proposition \ref{prop_anal_cqsq}} \label{secanaliticitycqsq}
\hfill\\

Let us start by studying the zeros of 
\begin{equation}\label{hkappa}
h_k(z)=1-k^2 \sin^2 z
\end{equation}
for $0< k <1$. \\

\begin{remark}
Observe  that $k_q>0$ unless $e_0=0$, {\it i.e.}, the boundary of the billiard is a circle and $k_q\equiv 0$ for any $q\geq 3$; in this latter case, $h_0(z)\equiv 1$ and there are no zeros: in fact, $c_q$ and $s_q$ correspond to $\cos(q\,z)$ and $\sin (q\,z)$ which are entire functions. Hence, we consider only the case $0<k_q<1$.\\
\end{remark}

\blm \label{lm:zeroes-hk}
Let $k$ satisfy $0<k<1$
$$
h_k(z)=0 \qquad \Longleftrightarrow \qquad z_n = \left(\frac{\pi}{2} + n\pi \right) \pm i \, \rho_k \qquad {\rm for}\; n\in \Z.\\
$$
\elm 

\begin{proof}
 Recall that:
\begin{eqnarray*}
\sin(x+iy) = \sin x \cosh y + i \cos x \sinh y,
\end{eqnarray*}
therefore,
{\small 
\begin{eqnarray} \label{sin2}
\sin^2(x+iy) &=& (\sin^2 x \cosh^2 y - \cos^2 x \sinh^2 y) + 2 i \sin x \cos x \sinh y \cosh y \nonumber\\
&=& [\sin^2 x \cosh^2 y - \cos^2 x (\cosh^2 y-1)] +  i \sin (2x)  \sinh y \cosh y \nonumber\\
&=& [\cosh^2 y (\sin^2 x - \cos^2 x) + \cos^2 x]
 +  i \sin (2x)  \sinh y \cosh y \nonumber\\
&=& [-\cosh^2 y \cos (2x) + \cos^2 x]
 +  i \sin (2x)  \sinh y \cosh y.
\end{eqnarray}
}

In particular, denoting $z=x+iy$ we have
{\small 
\begin{eqnarray}
h_k(z) &=& 1- k^2 \sin^2 z \label{hk}\\
&=& [1 - k^2 \cos^2 x + k^2 \cosh^2 y \cos (2x) ]
 -  i k^2 \sin (2x)  \sinh y \cosh y \nonumber
\end{eqnarray}
}
and hence for $0<k<1$:
$$
h_k(z)=0 \qquad \Longleftrightarrow \qquad 
\left\{
\begin{array}{l}
1 - k^2 \cos^2 x + k^2 \cosh^2 y \cos (2x) =0\\
\sin (2x)  \sinh y \cosh y = 0.
\end{array}
\right.
$$
The second equation has solutions:
$$
({ i})\; \; x=\frac{m\pi}{2}  \quad ({\rm with}\; m\in \Z) \qquad {\rm or} \qquad  ({ii})\;\; y=0.
$$
If we plug those solutions in the first equation we obtain:
\begin{itemize}
\item[({\it i})] Let $x=\frac{m\pi}{2}$ and  let us distinguish two cases. 
\begin{itemize}
\item[a)] if $x=n\pi$, then the first equation becomes
$$
1- k^2  + k^2 \cosh^2 y > 0 \qquad \mbox{for} \quad 0<k<1. 
$$
\item[b)] if $x=\frac{(2n+1)\pi}{2}$, then the first equation becomes
$$
1 -  k^2 \cosh^2 y=0,
$$
hence
$$
\cosh^2 y=1/k^2 \qquad \Longleftrightarrow \qquad y_{\pm}=\pm \rho_k:= \pm \arcosh \left(1/k \right),$$
which is well defined since $0<k<1$.
\end{itemize}
\item[({\it ii})] If $y=0$, then the first equation becomes:
\begin{eqnarray*}
0 &=& 1 - k^2 \cos^2 x + k^2 \cos (2x) \\
&=& 1 - k^2 \sin^2 x,
\end{eqnarray*}
which does not admit solutions for $0<k<1$.
\end{itemize}
Summarizing, for $0<k<1$
$$
h_k(z)=0 \qquad \Longleftrightarrow \qquad z_n = \left(\frac{\pi}{2} + n\pi \right) \pm i \, \rho_k \qquad {\rm for}\; n\in \Z.
$$
\end{proof}

\medskip 

If we denote by $\Sigma_\rho$ the open complex strip of (half) width $\rho>0$ around the real axis, {\it i.e.},
$$\Sigma_{\rho}:= \left\{ z\in \C: \; \left|{\rm Im}(z)\right| <  \rho \right\},$$
then we conclude that $h_k$  is an entire function that, for $0<k<1$, does not
vanish in the strip  $\Sigma_{\rho_k}$. \medskip

Now we want to consider the complex function $\sqrt{h_k(z)}$ and understand its domain of analyticity. Recall the following elementary result from complex analysis:\medskip

\noindent {\it Let $f$ be a nowhere vanishing holomorphic function in a simply connected region $\Omega$. Then $f$ has  a holomorphic logarithm, and hence, a holomorphic square-root in $\Omega$.}\\

Therefore, we can conclude that the functions $\sqrt{h_k(z)}$ and $1/\sqrt{h_k(z)}$ are analytic in $\Sigma_{\rho_k}$.\\

If we consider, for $0<k<1$, the function $F(\f;k) := \displaystyle{\int_0^{\f} \frac{d\f}{\sqrt{1-k^2\sin^2 \f}} }$, then its complex extension is given by
$$
F(z; k) := \int_0^{z} \frac{d\z}{\sqrt{h_k(\z)}} .
$$
It follows from Cauchy's theorem that this function is well-defined and analytic in $\Sigma_{\rho_k}$. This completes the proof of Proposition \ref{prop_anal_cqsq}. \qed

\subsection{Proof of Proposition \ref{secanaliticityeh}} \label{appsingularitiesellipticmotions}
\hfill \medskip 
 
Observe that, using the notation introduced in \eqref{hkappa}, 
$$e_{h}(z)=\frac{1}{{h_{e_0}\left(z+\frac{\pi}{2}\right)}}.$$
It follows from the discussion in Section \ref{secanaliticitycqsq} that this function has singularities (which are poles) at
$$
\zeta_n =  n\pi  \pm i \, \rho_{0} \qquad {\rm for}\; n\in \Z,
$$
where $\rho_{0}= \arcosh \left(1/e_0 \right) = \mu_0$. In particular its maximal strip of analyticity is given by
$$\Sigma_{\rho_{0}} =  \left\{ z\in \C: \; \left|{\rm Im}(z)\right| <  \rho_{0} \right\}.$$
This concludes the proof of Proposition \ref{secanaliticityeh}. \qed
\\


\section{Proof of Proposition \ref{newpropindep}} \label{lastbutnotleast}

Consider the following variational problem.

Given $0< j\leq 2q_0$, we would like to see how much $\bbe_j(\f)$ is 
linearly independent of the vector subspace 
{
$$
\Lambda_{q_0} := \overline{\langle \{\bbe_k\}_{ k>2 q_0}
\rangle}.
$$ }
{Observe that it suffices to consider an arbitrary $q_0$, since we already 
have linear independence for every finite subcollection. }\\

{We start by considering the case $j\geq 5$; for the other case, see Remark \ref{rem-jcaseless5}}.
Let us define $v_j$  as the vector realizing the minimal 
$L_{e_0}^2$-distance from the unit vector 
$\bbe_j$  to the subspace $\Lambda_{q_0}$; namely, 
if
\begin{eqnarray}\label{eq:v-e}
v_j &:=& \bbe_j - \sum_{k> 2q_0} d_{jk} \bbe_k,
\end{eqnarray}
then we require that  $v_j$ is orthogonal to all $\bbe_k$, 
for  $k> 2q_0$. Hence, we consider the 
{$L^2_{e_0}$-scalar product} of $v_j$ with $\bbe_m$, for 
$m > 2q_0$, and we impose that it is equal to  zero:
\begin{equation}\label{dotproductsvanish}
v_j \cdot {\bbe_m} =  \bbe_j \cdot \bbe_m - \sum_{k> 2q_0} 
d_{jk} \,(\bbe_k \cdot \bbe_m)=0. \\
\end{equation}
\medskip

\noindent {\bf Strategy of proof:} Notice that by definition 
each vector $v_j,\ 0<j\le 2q_0$ is the projection of $\bbe_j$ 
onto the orthogonal complement to $\Lb_{q_0}$. If 
the vectors $\{v_j,\ 0<j\le 2q_0\}$ are linearly independent 
(see Corollary \ref{letshopeitiscorrect}), then the subspaces 
$\langle \left\{\bbe_k\right\}_{0\leq k \leq  2q_0}\rangle $ 
and 
$\overline{ \langle \left\{\bbe_k\right\}_{k> 2q_0}\rangle}$ 
have zero dimensional intersection 
(see Proposition \ref{newpropindep}).
This, in turn, implies that $\{\bbe_j,\ j>0\}$ form 
a basis of $L^2(\T)$ 
(see   Lemma \ref{invertibilityimpliesbasis}).\\

{ The key idea to check linear independence of vectors 
$\{v_j,\ 0<j\le 2q_0\}$ is the same as in the case of finite 
linear combinations (see Proposition \ref{linearindep}).
In the case $\{\bbe_j,\ 0<j \le 2q_0\}$ singularities of 
the complex extensions are explicit and pairwise disjoint for 
$\bbe_i$ and $\bbe_j$ with $i\ne j$. We modify this idea 
for $\{v_j,\ 0<j\le 2q_0\}$ as follows: 
for each $5\leq j\leq 2q_0$ we would like to compare 
the maximal strips of analyticity of $\bbe_j$ and $v_j$ 
related by \eqref{eq:v-e}. 
Notice that the width of the maximal strip 
of analyticity of $\bbe_j$ equals $\rho_{k_{\widehat j}}$, 
while the width of the maximal strip of analyticity of $v_j-\bbe_j$ equals 
to the strip of analyticity of $\bbe_j-\sum_{m> 2q_0} d_{jm}\,\bbe_m$,
which turns out to equal $\rho_{k_{\widehat j}}$ 
(Corollary \ref{cor:analytic-strips}). Infinite linear independence 
will then follow, see Corollary \ref{letshopeitiscorrect}.\\

Let us introduce some notation. For $j\leq 2q_0 < k,m$, we define
\be \label{eq:a-entries}
a_{km}:= \bbe_k\cdot \bbe_m = \int_0^{2\pi} \bbe_k(\f)\, \bbe_m(\f)\, {(1-e_0^2\cos^2\f)^2}d\f
\ee
and 
\be \label{eq:b-entries}
b_{jm}:=\bbe_j\cdot \bbe_m = \int_0^{2\pi} \bbe_j(\f)\, \bbe_m(\f) \,{(1-e_0^2\cos^2\f)^2} d\f,
\ee 
{where the scalar product is meant in the weighted  space $L^2_{e_0}$.}

\medskip

Hence, we obtain the (infinite) {row} vector 
$$\vec B_{q_0}:=(b_{jm})_{m > 2q_0}$$
and the (infinite) square matrix 
$$
 A_{q_0}:=(a_{km})_{k,m > 2q_0}.
$$
In particular, {if we denote by $\vec{D}_{q_0}$  the infinite row vector
$$
\vec{D}_{q_0}:= (d_{jk})_{k> 2q_0},
$$
then equation \eqref{dotproductsvanish} becomes
\be \label{eq:D-matrix} 
\vec{D}_{q_0} \, A_{q_0}  =  \vec B_{q_0} .
\ee
In particular, if  $A_{q_0}$ is invertible, then 
$$
\vec{D}_{q_0}=  \vec B_{q_0} \, A_{q_0}^{-1}.
$$
}

\medskip 

Now we need to study $A_{q_0}$ and $\vec B_{q_0}$ for large $q_0$. 
{Notice that the matrix $A_{q_0}$ is a small perturbation of the identity,  
because by Lemma \ref{lm:corr-decay} for 
$k\ne m\to +\infty$ its elements $a_{km}$ decay exponentially 
{(we will make this more quantitative in the following)}.} The {vector} $\vec B_{q_0}$ 
has also components exponentially decaying in $m$ (it  follows from the estimates in 
Lemma \ref{lm:corr-decay} too). {To compare maximal strips of analyticity of $v_j$ 
and $\bbe_j$ for each $j\le 2q_0$ we need to estimate the exponent of the speed of 
decay of elements of $\vec{D}_{q_0}$.  Our analysis starts with the following lemma.}\\

\noindent {{\bf Notation.} Hereafter, given an integer $q\in\N$, we will denote  
$\widehat{q}:=\left[ \frac{q+1}{2}\right],$ where $[\,\cdot\,]$ denotes the integer part. 
This cumbersome notation is needed since for every integer $q$ we have couples 
$\bbe_{2q}$ and $\bbe_{2q-1}$ corresponding to the same rotation number $1/q$.
Whenever it is possible, in the forthcoming statements and proofs, we will try 
to ease notation as much as possible.
}

\medskip

\begin{theorem} \label{lm:corr-decay} 
For every $e_0>0$, there exists
$q_0=q_0(e_0)$ such that the following holds. For each $j\geq 3$  there exists
$\lambda_j \in (0,1)$ such that for any $\delta>0$ there is ${C_j:=C_j(e_0,\delta)}>0$ 
such that for  each $3\le j \leq m$ 
\be \label{eq:upper-corr-bound<}
{|a_{jm}-\delta_{jm}|\le C_j (\lambda_j+\delta)^{{\widehat{m}}},}
\ee
{where $\widehat{m}:=\left[ \frac{m+1}{2}\right]$.}
Moreover, for $2q_0<  j \le m$ we have   
\be \label{eq:upper-corr-bound>}
|a_{jm}-\delta_{jm}|\le C^*\, (\lambda^*+\delta)^{{\widehat{m}}},
\ee
{for some $C^*=C^*(e_0,\delta)$ and $\lambda^*=\lambda^*(e_0)<1$.}
\end{theorem}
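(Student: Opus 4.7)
The plan is to realise each entry $a_{jm}$, for $j\le m$, as (up to uniformly bounded normalising factors) the $\widehat m$-th Fourier coefficient of an explicitly constructed $1$-periodic analytic function, and then to extract the exponential decay rate by shifting the integration contour into the strip of analyticity. Following the action-angle substitution already used in Section \ref{sec:preserva-rational}, for the column index introduce
\[
v := \frac{F(\f;k_{\widehat m})}{4K(k_{\widehat m})}\in[0,1],\qquad \f_m(v):=\mathrm{am}\!\bigl(4K(k_{\widehat m})\,v;\,k_{\widehat m}\bigr),
\]
so that $c_{\widehat m}(\f)\,d\f = 4K(k_{\widehat m})\cos(2\pi\widehat m\,v)\,dv$ and analogously for $s_{\widehat m}$. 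Substituting into \eqref{eq:a-entries} and using the uniform two-sided bounds on $\|c_q\|_{L^2_{e_0}}$ from Section \ref{sec:adapted-basis} reduces matters to bounding the $\widehat m$-th Fourier coefficient of the $1$-periodic function
\[
G_{j,m}(v) := \{c_{\widehat j}\text{ or }s_{\widehat j}\}\bigl(\f_m(v)\bigr)\cdot\bigl(1-e_0^2\cos^2\f_m(v)\bigr)^2.
\]

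The next step is to pin down the strip of analyticity of $G_{j,m}$. By the classical properties of the Jacobi amplitude, $\mathrm{am}(\cdot;k)$ extends holomorphically to $|\mathrm{Im}(u)|<K(k')$ and maps this strip biholomorphically onto $|\mathrm{Im}(\f)|<\rho_k$, sending the imaginary axis monotonically and bijectively to the imaginary axis of its image. Combining this with Proposition \ref{prop_anal_cqsq} shows that $G_{j,m}$ is holomorphic on $|\mathrm{Im}(v)|<\sigma_{j,m}$, where $\sigma_{j,m}$ is the height at which $\f_m$ first reaches $|\mathrm{Im}(\f)|=\rho_{k_{\widehat j}}$ (this uses $\widehat j\le\widehat m$, so $\rho_{k_{\widehat j}}\le\rho_{k_{\widehat m}}$). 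A Cauchy shift of the integration contour by $\sigma\in(0,\sigma_{j,m})$ then yields the Paley--Wiener bound
\[
|a_{jm}|\,\le\,|\kappa_{jm}|\cdot\sup_{|\mathrm{Im}(v)|\le\sigma}\!|G_{j,m}(v)|\cdot e^{-2\pi\sigma\,\widehat m}.
\]
Because $k_{\widehat m}\searrow e_0$, we have $\sigma_{j,m}\to\sigma_j^\star\in(0,K(e_0')/(4K(e_0))]$; and because $\rho_{k_{\widehat j}}\nearrow\rho_0$, $\sigma_j^\star\nearrow\sigma_\infty^\star:=K(e_0')/(4K(e_0))$.

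For \eqref{eq:upper-corr-bound<}, fix $j$, let $m$ be large, and choose $\sigma:=\sigma_j^\star-\epsilon$ with $\epsilon$ so small that $e^{2\pi\epsilon}<1+\delta/\lambda_j$; absorbing $\sup|G_{j,m}|$ (which depends on $j$ only through $c_{\widehat j}$) and the $\widehat j$-dependent factor $e^{2\pi\sigma\widehat j}$ into the constant $C_j(e_0,\delta)$ produces the bound with $\lambda_j:=e^{-2\pi\sigma_j^\star}\in(0,1)$. The diagonal $j=m$ is trivial by normalisation, and the adjacent-parity case $(j,m)=(2k-1,2k)$ has $a_{jm}=0$ since $c_k\,s_k\,(1-e_0^2\cos^2\f)^2$ is odd under $\f\mapsto-\f$. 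For \eqref{eq:upper-corr-bound>} one fixes $q_0=q_0(e_0)$ large enough that $\lambda_j\le \lambda^\star+\delta/2$ for all $j>2q_0$, where $\lambda^\star:=e^{-2\pi\sigma_\infty^\star}$, and then uniformises $\sup|G_{j,m}|$ on a fixed sub-strip of width strictly less than $\sigma_\infty^\star$: the composition $\f_m(v)$ lies in a fixed compact subset of $|\mathrm{Im}(\f)|<\rho_0$ on which $c_{\widehat j},s_{\widehat j}$ admit explicit bounds coming from the formulas of subsection \ref{sec_analiticitycqsq} and the convergence $k_{\widehat j}\to e_0$.

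The main technical obstacle is the quantitative trade-off needed for \eqref{eq:upper-corr-bound>}: on the shifted contour, the amplitude of $c_{\widehat j}$ grows like $e^{\omega_{\widehat j}\cdot\mathrm{Im}\,F(\cdot;k_{\widehat j})}$ with $\omega_{\widehat j}\sim\widehat j$, while the Paley--Wiener factor gives $e^{-2\pi\sigma\widehat m}$; since the two exponents have the same asymptotic rate $\pi K(e_0')/(2K(e_0))$ as $\sigma\nearrow\sigma_\infty^\star$, one must carefully choose $\sigma$ strictly inside the strip of analyticity, exploit the strict separation $\widehat m\ge\widehat j+1$ in the non-degenerate cases, and absorb the resulting losses into the $\delta$-thickening $\lambda^\star\to\lambda^\star+\delta$. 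Once the classical modular estimates on $\mathrm{am}(\cdot;k)$ near the boundary of its analyticity strip, together with their uniformity in $k\to e_0$, are in place, the rest of the argument is a routine contour shift combined with the Paley--Wiener principle.
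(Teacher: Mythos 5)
Your proposal follows essentially the same route as the paper: rewrite $a_{jm}$ as the $\widehat m$-th Fourier coefficient of the composition $G_{j,m}$, identify its strip of analyticity via the singularities of $c_{\widehat j}$ pulled back along $\f_m$, and invoke Paley--Wiener. The strip-width computation matches the paper's $\sigma_m(\rho_{k_j})$ (up to the $2\pi$ normalisation between $v$ and $\theta$), and the observation that $\sigma_{j,m}$ decreases in $m$ to a limit $\sigma_j^\star$ which increases in $j$ toward $\sigma_\infty^\star$ is likewise the paper's.

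However, there are two genuine gaps. First, a small one: the assertion that the strip of analyticity of $G_{j,m}$ is set by the height at which $\f_m$ first reaches $|\mathrm{Im}\,\f|=\rho_{k_{\widehat j}}$ requires comparing this height to the height of the branch point of $\mathrm{am}(\cdot;k_{\widehat m})$ itself (equivalently, the singularity of the $(1-e_0^2\cos^2\f_m)^2$ factor), i.e. showing $\int_0^{\rho_{k_{\widehat j}}}(1-k_{\widehat m}^2\cosh^2 t)^{-1/2}\,dt < K(k_{\widehat m}')$ for $j<m$; this is Lemma \ref{damnedinequality} in the paper, and your parenthetical ``$\rho_{k_{\widehat j}}\le\rho_{k_{\widehat m}}$'' does not by itself deliver it (the strip of $\mathrm{am}(\cdot;k_{\widehat m})$ is not mapped onto $|\mathrm{Im}\,\f|<\rho_{k_{\widehat m}}$: $\mathrm{am}$ blows up at its pole).

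The second gap is more serious and concerns \eqref{eq:upper-corr-bound>}. You state that the growth exponent of $c_{\widehat j}$ on the shifted contour and the Paley--Wiener decay exponent ``have the same asymptotic rate $\pi K(e_0')/(2K(e_0))$ as $\sigma\nearrow\sigma_\infty^\star$,'' and propose to close the gap by exploiting $\widehat m\ge\widehat j+1$ and a $\delta$-thickening. But if the per-index growth rate equals the per-index decay rate, then $e^{r\widehat j}\cdot e^{-r\widehat m}$ with $\widehat m\ge\widehat j+1$ only gives a constant factor $e^{-r}$, not a bound of the form $C^*(\lambda^*+\delta)^{\widehat m}$ uniform in $j$ --- and for $\widehat m$ close to $\widehat j$ nothing is gained at all. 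The paper's resolution is precisely that the two rates are \emph{not} asymptotically equal: Proposition \ref{prop:anal-bnds} produces $\kappa^*=\kappa^*(e_0)>0$ with $\sigma_\infty(\rho_{k_j}) > \rho_{k_j}(1+\kappa^*)$, so that the decay rate exceeds the growth rate by a margin bounded below uniformly in $j>2q_0$, yielding $\lambda^*=\exp[-(\sigma_\infty(\rho_{k_{q_0}})-\rho_{k_{q_0}})]<1$. Without an estimate of this type, the uniform bound \eqref{eq:upper-corr-bound>} does not follow from the contour-shift argument as you have set it up.
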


\smallskip

\begin{remark} 
{We will see 
that we can choose  $\lambda_j=\exp[-\rho_{k_j}(1+\kappa^*))]$, 
for some {suitable} $\kappa^*=\kappa^*(e_0)>0$. 
Moreover, by studying  the growth of the constants $C_j$, we show that we can choose
{$\lambda^*= \exp[-(\sigma_\infty (\rho_{k_{q_0}})  - \rho_{k_{q_0}})]$, where
$\sigma_\infty (\rho_{k_{q_0}})  - \rho_{k_{q_0}}>0$ 
(see  \eqref{defsigmainfty} for a definition of $\sigma_\infty(\cdot))$.}}\\
\end{remark}


\begin{proof} 
Recall from \eqref{changenotation2}  that
$$
\bbe_{2j} := \dfrac{c_j}{\|c_j\|_{L^2_{e_0}}},  \quad \bbe_{2j-1}: = \frac{s_j}{\|s_j\|_{L^2_{e_0}}} \qquad \forall\;j\geq 3.
$$
In particular, up to multiplication by constants, we have:
$$
\bbe_{2j}(\f) \asymp  \frac{\cos(j \frac{2\pi}{4K(k_j)} F(\f,k_j))}{\sqrt{1-k_j^2\sin^2\f}},
\qquad \bbe_{2j-1}(\f) \asymp  \frac{\sin({{j}} \frac{2\pi}{4K(k_j)} F(\f,k_j))}{\sqrt{1-k_j^2\sin^2\f}}.
$$

Let us now denote
$$
t_{2j}(\f) \,=\, t_{2j-1}(\f) \,:=\, \frac{2\pi}{4K(k_j)} F(\f,k_j)
$$
and their inverses
$$
 \f_{2j} (t) \,=\, \f_{2j-1} (t) \,:=\, {\rm am}\,\left(\frac{4K(k_j)}{2\pi} t, k_j\right);
$$
then 
$$
\bbe_{2j}(\f) \asymp  \cos(j t_j(\f)) \, \frac{dt_j}{d\f}(\f) \quad {\rm and} \quad
\bbe_{2j-1}(\f) \asymp  \sin(j t_j(\f)) \, \frac{dt_j}{d\f}(\f).
$$

\medskip

{We need to compute $\bbe_j \cdot \bbe_m$. 
Observe that if $j=m$, then it is $1$, since they are unit vectors with respect to the $L^2_{e_0}$-scalar product. 
Let us assume that $j < m$. 
Doing a change of coordinate in the corresponding integral, we get {(we consider the case in which  
both indices are even, since the other cases are analogous)}:
\begin{eqnarray*}
\bbe_{2j} \cdot \bbe_{2m} &=& \int_0^{2\pi} \bbe_{2j}(\f) \,\bbe_{2m}(\f)\, (1-e_0^2\cos^2\f)^2\,d\f 
\\
&=& 
\int_0^{2\pi} \bbe_{2j}(\f_m(t)) \, \cos (m t)\, \frac{dt_m}{d\f_m} \frac{d\f_m}{dt}\,(1-e_0^2\cos^2\f_m(t))^2\,dt \\
&=&\int_0^{2\pi} \bbe_{2j}(\f_m(t))\,(1-e_0^2\cos^2\f_m(t))^2 \,\cos (m t) \,dt. 
\end{eqnarray*}
}

{Hence, we are computing the $m$-th Fourier coefficients of the function} 
\begin{eqnarray}\label{Ejm}
E_{jm}(t) &:=& {\bbe_{2j}(\f_m(t)) \,}
{(1-e_0^2\cos^2{\f_m(t)})^2} \nonumber\\
&=&
\frac{\cos \left(j \frac{2\pi}{4K(k_j)} F\left({\f_m(t)},k_j\right)\right)} {\sqrt{1-k_j^2\sin^2 \left(  {\f_m(t)}\right)}}\ {(1-e_0^2\cos^2\f_m(t))^2}\nonumber\\
&=&
\frac{\cos \left(j F\left({\rm am}\,(\frac{4K(k_m)}{2\pi} t, k_m),k_j\right)\right)} {\sqrt{1-k_j^2 \, \sn^2 \left( \frac{4K(k_m)}{2\pi} t, k_m\right)}}\ {\left(1-e_0^2\, \cn^2 \left( \frac{4K(k_m)}{2\pi} t, k_m\right)\right)^2}.
\end{eqnarray}

{In order to compute the decay rate of its Fourier coefficients, we need to analyze its maximal strip of analyticity.}

\medskip 

Recall that $k_j$ represents the eccentricity of the caustic of rotation number $1/j$. In particular, it is  strictly decreasing  with respect to $j$ and
$$
k_j > k_m > e_0\qquad \forall\,2< j< m.
$$

\medskip

First of all, observe (see Remark \ref{rmk14}) that  $\sn(z,k)$ and $\cn(z,k)$ have simple poles with imaginary parts $iK(k')$, 
where $k'$ denotes the complementary modulus $k':=\sqrt{1-k^2}$. Hence, 
$\sn ( \frac{4K(k_m)}{2\pi} t, k_m)$ has maximal strip of analyticity of width equal to 
$2\pi\frac{K(k_m')}{4K(k_m)}$.\\
On the other hand,  $\cos(\cdot)$ is an entire function.
Thus, the singularities of $E_{jm}$ can be of two types: singularities of the last bracket
and vanishing of the denominator. The first type singularity occurs at 
$i2\pi\frac{K(k_m')}{4K(k_m)}$.\\

Hence, it remains only to study when the denominator of $E_{jm}$ vanishes:
$$
1-k_j^2\sin^2 \left(  {\rm am}\,\left(\frac{4K(k_m)}{2\pi} \zeta, k_m\right)\right) =0.
$$
Proceeding as in Lemma \ref{lm:zeroes-hk}, if follows that the above equality is achieved when
\begin{eqnarray*}
{\rm am}\,\left(\frac{4K(k_m)}{2\pi} \zeta, k_m \right) = \frac{\pi}{2} + \pi n \pm i \rho_{k_j},
\end{eqnarray*} 
where $\rho_{k_j} = \arcosh\, (1/k_j)$.
In particular, the solutions of this equation are:
\begin{eqnarray*}
\zeta_n &:=& \frac{2\pi}{4K(k_m)} F\left(\frac{\pi}{2} + \pi n \pm i \rho_{k_j}, k_m \right) \\
&=& \frac{2\pi}{4K(k_m)} \left( F\left(\frac{\pi}{2} \pm i \rho_{k_j}, k_m \right) + 2n K(k_m)
\right) \\
&=& \frac{2\pi}{4K(k_m)}  F\left(\frac{\pi}{2} \pm i \rho_{k_j}, k_m \right) + \pi n.
\end{eqnarray*} 
Observe that $\rho_{k_j} < \rho_{k_m}$, so the points $\frac{\pi}{2} \pm  i \rho_{k_j}$ are inside the strip of analyticity of $F(\cdot ;k_m)$.

The above expression can be expanded further. In fact, observe that
\begin{eqnarray*}
F\left(\frac{\pi}{2} \pm  i \rho_{k_j}, k_m \right) &=&
K(k_m) + \int_{\frac{\pi}{2}}^{\frac{\pi}{2}\pm i \rho_{k_j}} \frac{1}{\sqrt{1-k_m^2\sin^2 z}} 
\,dz \\
&=& K(k_m) \pm i \int_{0}^{\rho_{k_j}} \frac{1}{\sqrt{1-k_m^2\cosh^2 t}} 
\,dt, \\
\end{eqnarray*}
where in the last equality we have used that $\sin^2 \left( \frac{\pi}{2} + it\right) = \cosh^2 t$.
Hence, the singularities are at
\begin{eqnarray*}
\zeta_n &:=& \frac{2\pi}{4K(k_m)}  F\left(\frac{\pi}{2} \pm i \rho_{k_j}, k_m \right) + \pi n \\
&=& \frac{\pi}{2} + \pi n \pm i  \frac{2\pi}{4K(k_m)} \int_{0}^{\rho_{kj}} \frac{1}{\sqrt{1-k_m^2\cosh^2 t}}\, dt.
\end{eqnarray*}

The quantity
\begin{eqnarray} \label{eq:sigma-m-rho}
\sigma_{m}(\rho_{k_j}) &:=&  \frac{2\pi}{4K(k_m)}  \min \left\{ \int_{0}^{\rho_{k_j}} \frac{1}{\sqrt{1-k_m^2\cosh^2 t}}\,dt\;, \;
K(\sqrt{1-k_m^2})\right\} \nonumber\\
&=& {\frac{2\pi}{4K(k_m)}  \int_{0}^{\rho_{k_j}} \frac{1}{\sqrt{1-k_m^2\cosh^2 t}}\,dt,}
\end{eqnarray}
provides the width of the strip of analyticity of $E_{jm}$; {the proof of the last equality
follows from Lemma \ref{damnedinequality} with $x=k_m$ and $y=k_j$, observing that 
 $0<k_m<k_j$ for $j< m$.
}\\

Notice that the entries $a_{2j,2m}$, defined by \eqref{eq:a-entries},  can be viewed as 
Fourier coefficients of the functions $E_{jm}$. The latter ones has the strip of analyticity, given by   
$\sigma_m(\rho_{k_j})$. 
For fixed $j$, these widths  are strictly decreasing in $m$ and, in the limit as $m\rightarrow +\infty$, 
they tend to    
\begin{equation}\label{defsigmainfty}
\sigma_{\infty}(\rho_{k_j}):=\frac{2\pi}{4K(e_0)}  \int_{0}^{\rho_{k_j}} \frac{1}{\sqrt{1-e_0^2\cosh^2 t}}\,dt,
\end{equation}
which is strictly increasing in $j$. In fact, consider the function 
$$
W(x,y) := \frac{2\pi}{4K(x)}  \int_{0}^{\arcosh 1/y} \frac{1}{\sqrt{1-x^2\cosh^2 t}}\,dt.
$$
defined for $0<x<y<1$. It suffices to show that it is  increasing with respect to $x$. 
Since $x=k_m$ is decreasing with respect to $m$, it will follow that it is decreasing. 
This can be shown using lengthy, but elementary manipulation or using 
Mathematica. In Figure \ref{figplots} we present two plots: the first one is the graph of 
$W$ and the second one is the graph of the partial derivative of $W$ 
with respect to $x$, which turns out to be positive.

\begin{figure} [h!] 
\begin{center}
\includegraphics[scale=0.29]{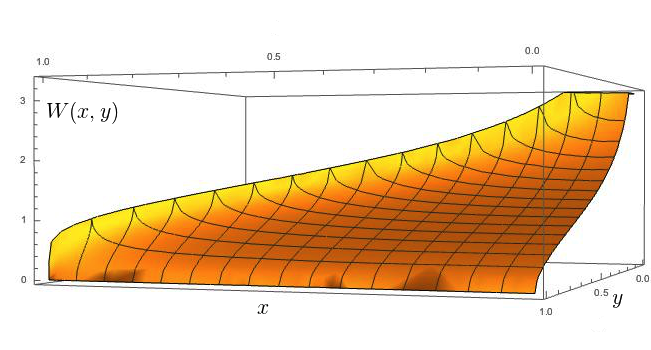} \ \ \ \ \ \ 
\includegraphics[scale=0.285]{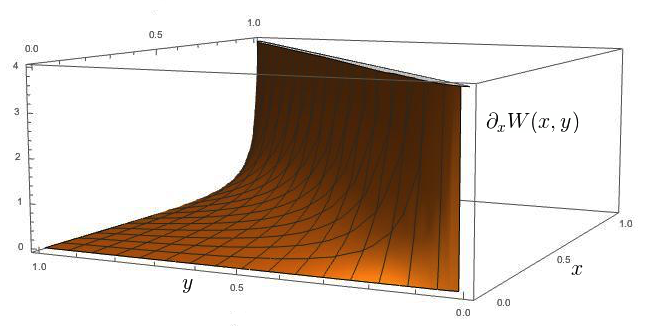}
\caption{Plots of $W(x,y)$ and $\partial_x W(x,y)$.}
\label{figplots}
\end{center}
\end{figure}

We can now deduce \eqref{eq:upper-corr-bound<} by applying 
Paley-Wiener theorem\footnote{\label{PWthm}
Let us briefly recall the statement of this theorem (see for example: 
\url{http://www.math.lsa.umich.edu/~rauch/555/fouriercomplex.pdf}):\\
{\bf Theorem (Paley-Wiener)}. {\it 
If $f$ is an analytic periodic function in the strip $\{|\Im z|< a\}$ for some $a>0$, 
then its Fourier coefficients $c_n$ satisfy the following property: 
for any $\epsilon>0$ there exists $C(\epsilon)>0$ such that 
$|c_n| \leq C(\epsilon) e^{(-a+\epsilon)|n|}$ for every $n \in \Z$. 
Conversely, if $\{c_n\}_n$ satisfy the above property, then  
$f :=\sum_{n\in\Z} c_n e^{inz}$ has an analytic continuation to the strip $\{|\Im z|< a\}$.
}\\
In particular, $C(\epsilon)$ is bounded from above by the supremum of
$|f|$ on the strip $\{|\Im z|\leq a-\epsilon\}$.\\
}.
Observe that we can choose $\lambda_m=\exp[-\sigma_\infty(\rho_{k_j})]$ 
(we will show in Proposition \ref{prop:anal-bnds} that
$\sigma_\infty(\rho_{k_j})> \rho_{k_m}(1+\kappa^*) )$, 
for some suitable $\kappa^*=\kappa^*(e_0)>0$ that will be explicitely determined).\\

\medskip
{Now, we want to prove \eqref{eq:upper-corr-bound>}. In order to do this, 
we need to get a better control on the constants $C_j$. In particular, we need to estimate
$$
\left|\cos \left(j \frac{2\pi}{4K(k_j)} F\left({\f_m(z)},k_j\right)\right)\right| = \left|\cos \left(j  \frac{2\pi}{4K(k_j)} F\left({\rm am}\,(\frac{4K(k_m)}{2\pi} z, k_m),k_j\right)\right)\right|
$$
on the complex strip of width $(\rho_{k_j}-\delta)$.
Since $|\cos(x+iy)|$ grows like $e^{|y|}$, then we need to estimate 
$$
\left| \Im \left(  \frac{2\pi}{4K(k_j)} F\left({\rm am}\,(\frac{4K(k_m)}{2\pi} t, k_m),k_j\right)\right)  \right| = 
\frac{2\pi}{4K(k_j)} \left| \Im \left(   F\left({\rm am}\,(\frac{4K(k_m)}{2\pi} t, k_m),k_j\right)\right)  \right|
$$
on the strip of width $(\rho_{k_j}-\delta)$.\\
Since $F(\cdot, k_m)$ and $\varphi_m$ are  one the inverse of the other, then it follows that for $2q_0<j\leq m$, then there exists $\widetilde{C}(e_0,\delta)>0$ such that
$$
 \left| \Im \left(   F\left({\rm am}\,(\frac{4K(k_m)}{2\pi} z, k_m),k_j\right)\right) \right| \leq \widetilde{C} \,\frac{4K(k_m)}{2\pi} |\Im (z)|
$$
for every $z$ in the complex strip of width $\rho_{{k_j}}-\delta$.
}

{
Hence,
$$
|E_{jm}(z)| \leq C\, \exp(j (\rho_{{k_j}}-\delta ))
$$
in the $(\rho_{k_j}-\delta)$-strip, for some $C=C(e_0,\delta)$.
\\
}

{Now with this bound at hand we can deduce from Paley-Wiener theorem (see footnote \ref{PWthm}) that
\begin{eqnarray*}
| {\mathbb e}_{2j} \cdot {\mathbb e}_{2m} | &\le&  C^*(e_0, \delta ) \, \exp(j (\rho_{k_j} - \delta))\, 
\exp(-m\, \sigma_\infty (\rho_{k_j}))\\
&\leq&C^*(e_0, \delta )\,  \exp(- m (\sigma_\infty (\rho_{k_{q_0}})  - \rho_{k_{q_0}} + \delta)).
\end{eqnarray*}
Since $\rho_{k_j} >\rho_{k_{q_0}}$ for every $j>q_0$, we can choose 
$\lambda^* = \exp(- (\sigma_\infty (\rho_{k_{q_0}})  - \rho_{k_{q_0}}))$. 
{We point out that $\sigma_\infty (\rho_{k_{q_0}})  - \rho_{k_{q_0}} > \kappa^* \rho_{k_{q_0}}>0$, 
as it follows from Proposition \ref{prop:anal-bnds}}.
}
\end{proof}

\bigskip

{Let us prove this Lemma, that was used in the proof of Theorem \ref{lm:corr-decay}.}\\

{
\begin{lemma}\label{damnedinequality}
For $0<x\leq y <1$ we have
$$
I(x,y):=\int_{0}^{\arcosh (1/y)} \frac{1}{\sqrt{1-x^2\cosh^2 t}}\,dt \leq
K(\sqrt{1-x^2}),
$$
with equality only for $x=y$.\\
\end{lemma}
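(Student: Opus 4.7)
The plan is to reduce $I(x,y)$ to a standard real elliptic integral via the change of variables $\cos\phi = x\cosh t$ with $\phi\in[0,\pi/2]$. The routine computations
$$\sqrt{1-x^2\cosh^2 t}=\sin\phi,\qquad x\sinh t=\sqrt{\cos^2\phi-x^2},\qquad x\sinh t\,dt=-\sin\phi\,d\phi$$
give, after cancellation, $\frac{dt}{\sqrt{1-x^2\cosh^2 t}}=-\frac{d\phi}{\sqrt{\cos^2\phi-x^2}}$. The endpoints $t=0$ and $t=\arcosh(1/y)$ map to $\phi=\arccos x$ and $\phi=\arccos(x/y)$ respectively, so
$$I(x,y)=\int_{\arccos(x/y)}^{\arccos x}\frac{d\phi}{\sqrt{\cos^2\phi-x^2}}.$$

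In this form the inequality becomes essentially a statement about the limits of integration. The integrand is strictly positive on $(\arccos(x/y),\arccos x)$, and the hypothesis $0<x\le y<1$ guarantees $\arccos(x/y)\ge 0=\arccos 1$, with equality if and only if $x=y$. Hence $I(x,y)\le I(x,x)$, with equality only on the diagonal. So the whole problem reduces to identifying $I(x,x)$ with $K(\sqrt{1-x^2})$.

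For this final identification I would apply the further classical substitution $\sin\phi=\sqrt{1-x^2}\sin\psi$, under which $\cos^2\phi-x^2=(1-x^2)\cos^2\psi$ and the measure transforms as $d\phi=\frac{\sqrt{1-x^2}\,\cos\psi}{\sqrt{1-(1-x^2)\sin^2\psi}}\,d\psi$. The factors $\sqrt{1-x^2}$ and $\cos\psi$ cancel the corresponding square roots in the denominator, the endpoints $\phi=0,\arccos x$ correspond to $\psi=0,\pi/2$, and what remains is precisely
$$I(x,x)=\int_0^{\pi/2}\frac{d\psi}{\sqrt{1-(1-x^2)\sin^2\psi}}=K(\sqrt{1-x^2}).$$
There is no genuine obstacle in this proof: both substitutions are classical, the monotonicity in $y$ is immediate once the integral has been rewritten, and the sharp case $x=y$ is clearly the only one producing equality since it is the unique choice making $\arccos(x/y)=0$.
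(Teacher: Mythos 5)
Your proof is correct. It follows the same overall strategy as the paper's -- reduce the problem to the diagonal identity $I(\cdot,\cdot)=K(\sqrt{1-(\cdot)^2})$ plus a monotonicity argument -- but it organizes the monotonicity differently, and in a slightly cleaner way. The paper observes that $I(x,y)$ is strictly increasing in $x$ while $K(\sqrt{1-x^2})$ is strictly decreasing in $x$, and then proves $I(y,y)=K(\sqrt{1-y^2})$ via the single substitution $\cosh^2 t-1=(1/y^2-1)\sin^2\theta$; the bound then reads $I(x,y)\leq I(y,y)=K(\sqrt{1-y^2})\leq K(\sqrt{1-x^2})$, using two separate monotonicity facts. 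Your substitution $\cos\phi=x\cosh t$ instead rewrites $I(x,y)=\int_{\arccos(x/y)}^{\arccos x}(\cos^2\phi-x^2)^{-1/2}\,d\phi$, which makes the dependence on $y$ appear only in the lower limit of integration; monotonicity in $y$ (with $x$ fixed) is then immediate from positivity of the integrand, you only need the diagonal identity $I(x,x)=K(\sqrt{1-x^2})$, and the equality case $x=y$ drops out as the unique value making $\arccos(x/y)=0$. Your second substitution $\sin\phi=\sqrt{1-x^2}\sin\psi$ for the diagonal is the familiar companion to the paper's and yields the same result. Both routes are valid; yours avoids invoking the monotonicity of $K$ and phrases the sharpness condition a bit more transparently.
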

}
{
\begin{proof}\footnote{{There is an alternative proof of this Lemma using the Reduction Theorem 
for General Elliptic Integrals (see e.g. \url{https://dlmf.nist.gov/19.29}). One can represent both integrals 
using the canonical form $R_F$ and then relate them using the representation formula for $R_F$ and in terms of 
$R_C$ (see \url{https://dlmf.nist.gov/19.23)}}}
Clearly, $I(x, y)$ is strictly increasing with respect to $x$, while  
$K(\sqrt{1-x^2})$ is strictly decreasing with respect to $x$.\\
The claim follows from the fact that for any $0<y<1$, we have   
$$I(y,y)=K(\sqrt{1-y^2}).$$
In fact, consider the following change of variable in the integral defining $I(y,y)$: 
$$
\cosh^2 t -1= (1/y^2-1) \sin^2 \theta,
$$
which implies
\begin{eqnarray*}
\sinh t = \sqrt{1+(1/y^2-1)} \sin \theta 
\end{eqnarray*}
and 
\begin{eqnarray*}
\cosh t &=& \sqrt{1+(1/y^2-1) \sin^2 \theta} \\
dt &=& \frac{\sqrt{1/y^2-1} \cos \theta\, d\theta}{\sqrt{1+ (1/y^2-1)\sin^2\theta}}.
\end{eqnarray*}
Then:
\begin{eqnarray*}
I(y,y) &=& \int_0^{\pi/2}   \frac{1}{\sqrt{1-y^2} \cos\theta}    \frac{\sqrt{1/y^2-1} \cos \theta\, d\theta}{\sqrt{1+ (1/y^2-1)\sin^2\theta}} \\
&=& \frac{1}{y} \int_0^{\pi/2} \frac{d\theta}{\sqrt{1+ (1/y^2-1)\sin^2\theta}}\\
&=& \frac{1}{y} \int_0^{\pi/2} \frac{d\theta}{\sqrt{\cos^2 \theta + 1/y^2  \sin^2\theta}}\\
&=& \frac{1}{y} \int_0^{\pi/2} \frac{d\theta}{\sqrt{1/y^2 - (1/y^2-1) \cos^2\theta}}\\
&=& \int_0^{\pi/2} \frac{d\theta}{\sqrt{1 - (1-y^2) \cos^2\theta}}\\
&=& \int_0^{\pi/2} \frac{d\theta}{\sqrt{1 - (1-y^2) \sin^2\theta}} = K(\sqrt{1-y^2}).\\
\end{eqnarray*}
\end{proof}
}

\bigskip

{The width of the  strip of analyticity of $v_j-\bbe_j$ depends on 
the exponent of the speed of decay of elements of $\vec{D}_{q_0}$. 
We will compare   now the width of strips of analyticity of $v_j$ and $v_j-\bbe_j$ for each 
$j<2q_0$.  \\
}

{We  need the following estimate to compare {$\sigma_{ m}(\rho_{k_{ j}})$} with {$\rho_{k_{j}}$}}.

\begin{proposition} \label{prop:anal-bnds}
There is a decreasing sequence $\kappa_m \ge \kappa^*:=\kappa^*(e_0)>0$ such that for any $m>j\geq 3$ we have 
$$
\rho_{k_j} < \sigma_m(\rho_{k_j})-\rho_{k_j} \kappa_m.
$$
In particular, 
\[
\rho_{k_j} < \sigma_\infty(\rho_{k_j})-\rho_{k_j} \kappa^*.\\
\]
\end{proposition}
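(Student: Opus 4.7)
The plan is to bound the integral defining $\sigma_m(\rho_{k_j})$ from below by the crudest possible estimate, namely $\cosh t \geq 1$, and to reduce the proposition to an elementary monotonicity/positivity statement about the function $f(k):=\pi/\bigl(2K(k)\sqrt{1-k^2}\bigr)$. Writing the integrand of $\sigma_m(\rho_{k_j})$ in the form $1/\sqrt{1-k_m^2\cosh^2 t}$ and noting that this is at least $1/\sqrt{1-k_m^2}$ (strictly so for $t>0$, using $m>j\geq 3$ to ensure $k_m\cosh\rho_{k_j}=k_m/k_j<1$), one obtains at once
\begin{equation*}
\sigma_m(\rho_{k_j}) \;>\; \frac{\pi\,\rho_{k_j}}{2K(k_m)\sqrt{1-k_m^2}} \;=\; f(k_m)\,\rho_{k_j}.
\end{equation*}
Notice that the right-hand side is independent of $j$, which is exactly what the proposition requires.

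The next step is to show that $f(k)>1$ on $(0,1)$ and that $f$ is strictly increasing there. Both facts follow from the identity
\begin{equation*}
K(k)\sqrt{1-k^2} \;=\; \int_0^{\pi/2}\sqrt{\tfrac{1-k^2}{1-k^2\sin^2\theta}}\,d\theta,
\end{equation*}
which presents $g(k):=K(k)\sqrt{1-k^2}$ as an integral of a function valued in $(0,1]$: the integrand equals $1$ only at $\theta=\pi/2$, so $g(k)<\pi/2$ for $k\in(0,1)$, giving $f(k)>1$. A direct computation of $\partial/\partial(k^2)$ of the integrand shows that it is strictly decreasing in $k^2$ on $\theta\in[0,\pi/2)$, so $g$ is strictly decreasing on $(0,1)$ and hence $f$ is strictly increasing. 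Since the sequence $\{k_m\}_{m\geq 3}$ is strictly decreasing with $k_m\to e_0$, the sequence $\kappa_m:=f(k_m)-1$ is strictly decreasing and converges to $\kappa^*:=f(e_0)-1>0$ (here we use $e_0>0$; the case $e_0=0$ is outside the framework of the proof since then $k_q\equiv 0$). In particular $\kappa_m\geq\kappa^*$, and combining with the first step gives $\sigma_m(\rho_{k_j})>(1+\kappa_m)\rho_{k_j}$, equivalently $\rho_{k_j}<\sigma_m(\rho_{k_j})-\rho_{k_j}\kappa_m$. The second assertion of the proposition, $\rho_{k_j}<\sigma_\infty(\rho_{k_j})-\rho_{k_j}\kappa^*$, follows at once by repeating the first-step bound with $k_m$ replaced by $e_0$ in the definition \eqref{defsigmainfty}, obtaining $\sigma_\infty(\rho_{k_j})>f(e_0)\rho_{k_j}=(1+\kappa^*)\rho_{k_j}$.

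The conceptual obstacle is a psychological one: it is tempting to try to use a sharper lower bound on the integrand (e.g.\ integrating $\cosh t$ more carefully) to extract $\sigma_m$'s actual growth in $\rho_{k_j}$. The observation that drives the proof is that one does not need this: because $\kappa_m$ should depend only on $m$, it suffices to show that $\sigma_m(\rho)/\rho\geq 1+\kappa_m$ with a constant lower bound, and this is achieved already by the zeroth-order estimate $\cosh t\geq 1$. Once this is recognized, the only technical point is the positivity and monotonicity of $f$, which reduces to the pointwise inequality for the integrand displayed above.
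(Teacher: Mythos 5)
Your proof is correct and follows essentially the same approach as the paper: both arguments bound the integrand in $\sigma_m(\rho_{k_j})$ from below using $\cosh t\ge 1$, reducing the claim to showing that $f(k):=\pi/(2K(k)\sqrt{1-k^2})$ exceeds $1$ by a uniform amount for $k\in[e_0,k_3]$.

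Where the two differ is in how they certify that excess. The paper partitions $4K(k)=\int_0^{2\pi}(1-k^2\sin^2 t)^{-1/2}\,dt$ into pieces bounded by $\pi/\sqrt{1-k^2}$ and $\pi/\sqrt{1-k^2/2}$, producing the explicit gap $\kappa'_m=\pi\bigl(1/\sqrt{1-k_m^2}-1/\sqrt{1-k_m^2/2}\bigr)$, and takes $\kappa_m=\kappa'_m/(4K(k_m))$. You instead invoke the exact representation $K(k)\sqrt{1-k^2}=\int_0^{\pi/2}\sqrt{(1-k^2)/(1-k^2\sin^2\theta)}\,d\theta<\pi/2$, take $\kappa_m=f(k_m)-1$ (the sharpest gap this method can give), and get the monotonicity of $\kappa_m$ from the same one-line derivative computation on the integrand. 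This is a cleaner and slightly stronger bookkeeping, but the idea is identical. One small point worth keeping explicit: as you note, $e_0>0$ is essential for $\kappa^*=f(e_0)-1>0$; the case $e_0=0$ corresponds to the circle, where the adapted basis degenerates to the Fourier basis and the proposition is not needed.
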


\medskip

\begin{proof}
Recall the definition of $\sigma_m(\rho_{k_j})$ in \eqref{eq:sigma-m-rho}.
There is {$\kappa'_m=\kappa'_m(k_m)>0$} such that   
\begin{eqnarray}\label{defkappa}
4\rho_{k_j} K(k_m)&=&\rho_{k_j} \int_0^{2\pi} \dfrac{dt}{\sqrt{1-k_m^2\sin^2 t}}\;  \nonumber\\
&< &\; \rho_{k_j}\left( \frac{\pi}{\sqrt{1-k_m^2}}+\frac{\pi}{\sqrt{1-k_m^2/2}}\right)
\nonumber\\
&{=:}&\; \rho_{k_j}\left( \frac{2\pi}{\sqrt{1-k_m^2}}-\kappa'_m\right)
\\
&=& 2\pi \int_0^{\rho_{k_j}} \dfrac{dt}{\sqrt{1-k_m^2 }} \;-\rho_{k_j}\kappa'_m  \nonumber\\ 
&<& \; 2\pi \int_0^{\rho_{k_j}} \dfrac{dt}{\sqrt{1-k_m^2 \cosh^2 t}}-\rho_{k_j}\kappa'_m,\nonumber
\end{eqnarray}
{where
$$\kappa'_m:= \pi\left(\frac{1}{\sqrt{1-k_m^2}}-\frac{1}{\sqrt{1-k_m^2/2}}\right),
$$
which is striclty decreasing\footnote{This follows from the fact that the function $\frac{1}{\sqrt{1-x^2}}-\frac{1}{\sqrt{1-x^2/2}}$ is strictly increasing in $[0,1)$.} as a function of $m$ and, as $m\rightarrow +\infty$, tends to
$$
\kappa'_\infty:= \pi\left(\frac{1}{\sqrt{1-e_0^2}}-\frac{1}{\sqrt{1-e_0^2/2}}\right) >0.
$$
}

Dividing on both sides of \eqref{defkappa} by $4 K(k_m)$ we get 
$$
\rho_{k_j} < \sigma_m(\rho_{k_j})-\dfrac{\rho_{k_j}\kappa'_m}{4K(k_m)}\qquad \mbox{for any $m>j\geq 3$}.
$$
{Denote $\kappa_m=\frac{\kappa'_m}{4K(k_m)}$; this function is also strictly decreasing\footnote{This follows from the fact that the function $\frac{1}{K(x)}\left(\frac{1}{\sqrt{1-x^2}}-\frac{1}{\sqrt{1-x^2/2}}\right)$ is strictly increasing in $[0,1)$.} as a function of $m$ and, as $m\rightarrow +\infty$, tends to
$$
\kappa^*:= \frac{\pi}{4K(e_0)}\left(\frac{1}{\sqrt{1-e_0^2}}-\frac{1}{\sqrt{1-e_0^2/2}}\right) >0.
$$
}
\end{proof}

\bigskip

Let ${\mathbb I}$ denote the Identity (infinite) matrix and let us denote
\[
A_{q_0} ={\mathbb I} +\Delta A_{q_0},  
\]   
where $\Delta A_{q_0}:=(a_{km}-\delta_{km})_{k,m > 2q_0}$
and 
\[
|a_{km}-\delta_{km}|\le C^*{(\lambda^* + \delta)^{q_0}.}  
\]

\begin{lemma}
\label{lm:convolve-bound}
Using the same notation as in Lemma \ref{lm:corr-decay}, assume that 
$q_0$ is chosen so that 
\be \label{eq:q0-est}
\sum_{ m> 2q_0} C^*(\lambda^*+\delta)^{{\widehat m}} \le \frac 14.
\ee
Then, for any $h,k> 2q_0$
we have 
\[
\left | \sum_{m> 2q_0} (a_{hm}-\delta_{hm})(a_{mk}-\delta_{mk}) \right | \le
\frac{C^*}{4}(\lambda^*+\delta)^{{\max\{\widehat k, \widehat h\}}}.  
\]
{In particular, this implies that   
\begin{equation}\label{DA2}
|(\Delta A_{q_0})^2_{h,k}|\leq \frac{C^*}{4}(\lambda^*+\delta)^{{\max\{\widehat k, \widehat h\}}}.
\end{equation}
Inductively, one can show that for every $N\geq 2$
\begin{equation}\label{DAN}
|(\Delta A_{q_0})^N_{h,k}| \leq \frac{C^*}{4^{N-1}}(\lambda^*+\delta)^{{\max\{\widehat k, \widehat h\}}}.
\end{equation}
}
\end{lemma}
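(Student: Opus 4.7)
My plan is to upgrade the entry bound of Theorem \ref{lm:corr-decay} by symmetry, reduce the convolution sum to a product of a max--exponent factor and a controllable tail via a short combinatorial inequality, and then iterate the resulting bound.

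First, I would exploit the symmetry $a_{hm}=\langle \bbe_{h},\bbe_{m}\rangle_{L^{2}_{e_0}}=a_{mh}$ of the correlation matrix. Theorem \ref{lm:corr-decay} provides the estimate $|a_{jm}-\delta_{jm}|\le C^{*}(\lambda^{*}+\delta)^{\widehat{m}}$ when $2q_0 < j \le m$, and symmetry promotes this to the two-sided bound
\[
|a_{hm}-\delta_{hm}|\le C^{*}(\lambda^{*}+\delta)^{\max\{\widehat{h},\widehat{m}\}}\qquad \forall\, h,m>2q_0.
\]

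Second, the key combinatorial step is the elementary inequality
\[
\max\{\widehat{h},\widehat{m}\}+\max\{\widehat{m},\widehat{k}\}\ \ge\ \max\{\widehat{h},\widehat{k}\}+\widehat{m},
\]
which can be verified by a short case analysis on the position of $\widehat{m}$ relative to $\widehat{h}$ and $\widehat{k}$. Applied termwise to $\sum_{m>2q_0}(a_{hm}-\delta_{hm})(a_{mk}-\delta_{mk})$, this lets me pull a global factor $(\lambda^{*}+\delta)^{\max\{\widehat{h},\widehat{k}\}}$ outside the sum, leaving the tail $\sum_{m>2q_0}C^{*}(\lambda^{*}+\delta)^{\widehat{m}}$, which by the hypothesis \eqref{eq:q0-est} is bounded by $1/4$. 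This yields the stated estimate and hence \eqref{DA2}.

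Third, I would establish \eqref{DAN} by induction on $N$. Writing $(\Delta A_{q_0})^{N+1}_{h,k}=\sum_{m>2q_0}(\Delta A_{q_0})^{N}_{h,m}(\Delta A_{q_0})_{m,k}$, the inductive hypothesis controls the first factor by $(C^{*}/4^{N-1})(\lambda^{*}+\delta)^{\max\{\widehat{h},\widehat{m}\}}$ and the symmetric entry bound controls the second. The very same max--inequality together with \eqref{eq:q0-est} then supplies an additional factor of $1/4$ at each step, propagating the decay $C^{*}/4^{N-1}\to C^{*}/4^{N}$.

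The main technical nuisance, rather than a real obstacle, is the symmetric upgrade of the entry bound: Theorem \ref{lm:corr-decay} is formulated only in the ordered range $j\le m$, so one must invoke the inner-product symmetry of $a_{hm}$ (together with $\lambda^{*}+\delta<1$) to obtain the $\max$--exponent form for arbitrary pairs $h,m>2q_0$. Once this is in place, the combinatorial max--inequality and the geometric tail estimate carry the proof through in a largely mechanical way.
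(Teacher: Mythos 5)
Your proposal is correct and in essence follows the same route as the paper: bound each factor by the two-sided version of the entry estimate, pull out the $(\lambda^*+\delta)^{\max\{\widehat h,\widehat k\}}$ factor, and absorb the remaining geometric tail using hypothesis \eqref{eq:q0-est}, then iterate. The only (purely presentational) difference is that the paper first assumes WLOG $h\le k$ and then pulls out the uniform bound $|a_{mk}-\delta_{mk}|\le C^*(\lambda^*+\delta)^{\widehat k}$ directly, leaving the symmetric upgrade and the monotonicity in $\widehat m$ implicit; you make the symmetry of $a_{hm}$ and the inequality $\max\{\widehat h,\widehat m\}+\max\{\widehat m,\widehat k\}\ge\max\{\widehat h,\widehat k\}+\widehat m$ explicit, which does the same accounting without a WLOG reduction. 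Both proofs rest on the same two ingredients: the two-sided decay bound from Theorem \ref{lm:corr-decay} together with $a_{hm}=a_{mh}$, and the smallness of the tail $\sum_{m>2q_0}C^*(\lambda^*+\delta)^{\widehat m}\le 1/4$.
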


\begin{proof} Without loss of generality we  assume $2q_0<h\leq k$ (indeed, estimates are symmetric with respect to switching 
indices $h$ and $k$). {Using \eqref{eq:upper-corr-bound>}  and   \eqref{eq:q0-est}:}
\begin{eqnarray*}
\left|  \sum_{m > 2q_0} (a_{hm}-\delta_{hm})(a_{mk}-\delta_{mk})\right| &\le& 
C^*(\lambda^*+\delta)^{{\widehat k}} \sum_{m> 2q_0} |a_{hm}-\delta_{hm}| \\
&\leq&
C^*(\lambda^*+\delta)^{{\widehat k}} \left(\sum_{m>2q_0} C^*(\lambda^*+\delta)^{{\widehat m}} \right)
\\
&=& \frac{C^*}{4}\ (\lambda^*+\delta)^{{\widehat k}}, 
\end{eqnarray*}
{which  implies \eqref{DA2}. As for \eqref{DAN}, it suffices to proceed by induction on $N$: assume that the estimate holds for $N\geq 2$, then
\begin{eqnarray*}
|(\Delta A_{q_0})^{N+1}_{h,k}\| &\leq& \left|  \sum_{m > 2q_0} (a_{hm}-\delta_{hm})(\Delta A_{q_0})^{N}_{m,k}\right|\\
&\leq&
\frac{C^*}{4^{N-1}}(\lambda^*+\delta)^{{\widehat k}} \left(\sum_{m>2q_0} C^*(\lambda^*+\delta)^{{\widehat m}} \right)
\\
&=& \frac{C^*}{4^{N}}\ (\lambda^*+\delta)^{{\widehat k}}. 
\end{eqnarray*}}
\end{proof}

\bigskip

Let us  now consider 
$$A_{q_0}^{-1} = ({\mathbb I}+ \Delta A_{q_0})^{-1}=
 {\mathbb I} + \sum_{N\ge 1} (-\Delta A_{q_0})^{N}.
$$
Applying Lemma \ref{lm:convolve-bound}, we deduce that 
 the $(k,m)$ entry of the matrix
\[
A_{q_0}^{-1}- {\mathbb I},
\]
that we denote by $a^-_{km}$, is bounded by 
\[
|a^-_{km}| \le 2C^*(\lambda^*+\delta)^{{\widehat m}}. \\
\]

\bigskip

Then, combining this with the estimates on the decays of the elements of $\vec{B}_{q_0}$ proved in Lemma \ref{lm:corr-decay}, we obtain the  following lemma 
(in particular, it uses the that fact  that $\sum_{m>2q_0} |b_{j,m}|<+\infty$).
\\

\begin{lemma} \label{lemmadjk}
Let $d_{jk}$ be the $(j,k)$--entry of 
\[
\vec{D}_{q_0}=  \vec B_{q_0} \cdot A_{q_0}^{-1},
\]
with $j\leq 2q_0< k$, then {there exists} $C^*>0$ such that 
for all $k> 2q_0$ we have 
\[
|d_{jk}|\le C^* (\lambda^*+\delta)^{{\widehat k}}.
\]
\end{lemma}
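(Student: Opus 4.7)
The idea is to unfold the matrix identity $\vec{D}_{q_0} = \vec B_{q_0}\, A_{q_0}^{-1}$ and insert the two quantitative inputs already at hand: the exponential decay of the entries of $\vec B_{q_0}$ (from Lemma~\ref{lm:corr-decay}) and the exponential decay of the off-diagonal entries of $A_{q_0}^{-1}$ just derived via the Neumann expansion
\[
A_{q_0}^{-1} \;=\; \mathbb I + \sum_{N\ge 1}(-\Delta A_{q_0})^N
\]
together with Lemma~\ref{lm:convolve-bound}, which gave $|a^-_{mk}|\le 2C^*(\lambda^*+\delta)^{\widehat m}$.

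First I would split
\[
d_{jk} \;=\; \sum_{m>2q_0} b_{jm}\,(A_{q_0}^{-1})_{mk} \;=\; b_{jk}\;+\;\sum_{m>2q_0} b_{jm}\, a^-_{mk},
\]
using $A_{q_0}^{-1} = \mathbb I + (A_{q_0}^{-1}-\mathbb I)$. Because $j \leq 2q_0 < k$, the regime \eqref{eq:upper-corr-bound<} of Lemma~\ref{lm:corr-decay} applies, giving $|b_{jk}| \leq C_j(\lambda_j+\delta)^{\widehat k}$. Since $j$ ranges over a finite set of indices (namely $\{0,\dots,2q_0\}$), the constants $C_j$ and rates $\lambda_j$ can be dominated uniformly in $j$, and after enlarging $C^*$ one obtains $|b_{jk}| \leq C^*(\lambda^*+\delta)^{\widehat k}$.

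For the sum, I would apply the triangle inequality with the bound $|a^-_{mk}| \leq 2 C^*(\lambda^*+\delta)^{\widehat m}$ and the estimate on $|b_{jm}|$:
\[
\left|\sum_{m>2q_0} b_{jm}\, a^-_{mk}\right|
\;\le\; 2 C^* C_j \sum_{m>2q_0}(\lambda_j+\delta)^{\widehat m}\,(\lambda^*+\delta)^{\widehat m}.
\]
Here one uses that for $m\geq k$, $(\lambda^*+\delta)^{\widehat m}\le (\lambda^*+\delta)^{\widehat k}(\lambda^*+\delta)^{\widehat m -\widehat k}$, while for $m<k$ one reorganises the product so that the $(\lambda^*+\delta)^{\widehat k}$ factor is extracted (possibly at the cost of an extra factor $(\lambda_j+\delta)^{\widehat k - \widehat m}$, harmless since $\lambda_j+\delta<1$). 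After this rearrangement the remaining series is a convergent geometric-type sum in $m$, bounded by a finite constant $\widetilde C_j$.

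Combining the two estimates yields $|d_{jk}| \leq (C^* + 2C^*\widetilde C_j)(\lambda^*+\delta)^{\widehat k}$, and absorbing the $j$-dependent prefactor into a new constant $C^*$ (legitimate because $j\le 2q_0$ ranges over a finite set) completes the proof. The main point to watch is the comparison between the rate $\lambda_j$ associated with an individual index $j\le 2q_0$ and the uniform rate $\lambda^*$; this relies on Proposition~\ref{prop:anal-bnds} and on the monotonicity of $\sigma_\infty(\rho_{k_j})$ established in the proof of Lemma~\ref{lm:corr-decay}, which together guarantee that the analyticity gaps $\sigma_\infty(\rho_{k_j})$ dominate $\sigma_\infty(\rho_{k_{q_0}})-\rho_{k_{q_0}}$ uniformly in $j\le 2q_0$.
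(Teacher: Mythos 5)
Your decomposition $d_{jk} = b_{jk} + \sum_{m>2q_0} b_{jm}\,a^-_{mk}$, and your treatment of the convolution term (using the exponential bound on $a^-_{mk}$ together with $\sum_m |b_{jm}| < \infty$), matches the paper's intended argument exactly; the paper's proof is essentially the one-sentence remark preceding the lemma. The issue is the bound on the diagonal term $b_{jk}$. Lemma~\ref{lm:corr-decay} gives $|b_{jk}| \le C_j(\lambda_j+\delta)^{\widehat k}$, and no enlargement of the prefactor $C^*$ can convert an exponential with base $\lambda_j$ into one with a smaller base $\lambda^*$ unless $\lambda_j \le \lambda^*$. You assert that $\sigma_\infty(\rho_{k_j}) \ge \sigma_\infty(\rho_{k_{q_0}}) - \rho_{k_{q_0}}$ for all $j\le 2q_0$ (which is equivalent to $\lambda_j\le\lambda^*$), citing Proposition~\ref{prop:anal-bnds} and the monotonicity of $\sigma_\infty$; but neither gives that inequality. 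Proposition~\ref{prop:anal-bnds} is a lower bound $\sigma_\infty(\rho_{k_j}) > \rho_{k_j}(1+\kappa^*)$ whose right-hand side shrinks as $j$ decreases, while monotonicity of $\sigma_\infty$ gives $\sigma_\infty(\rho_{k_j}) \le \sigma_\infty(\rho_{k_{q_0}})$, the wrong direction. What is actually required is the modulus-of-continuity estimate $\sigma_\infty(\rho_{k_{q_0}}) - \sigma_\infty(\rho_{k_j}) \le \rho_{k_{q_0}}$, and this can fail: since $K(e_0)\sqrt{1-e_0^2}<\pi/2$ one has $\sigma_\infty'(\rho)=\frac{\pi}{2K(e_0)\sqrt{1-e_0^2\cosh^2\rho}} > 1$ throughout $(0,\rho_0)$, whence $\sigma_\infty(\rho_{k_{q_0}}) - \sigma_\infty(\rho_{k_j}) > \rho_{k_{q_0}} - \rho_{k_j}$, and the required inequality breaks down whenever $\rho_{k_j}$ is small relative to $\rho_{k_{q_0}}$ (for instance $j$ small and $q_0$ large). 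Either the rate comparison needs a genuine argument, or the conclusion should be restated with $\max\{\lambda^*,\,\max_{j\le 2q_0}\lambda_j\}$ in place of $\lambda^*$ (still $<1$); one should then verify that the downstream analyticity argument in Corollary~\ref{cor:analytic-strips} tolerates this weaker rate.
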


\medskip

For each $5\leq j\leq 2q_0$ we need to compare the  maximal strips of analyticity of $\bbe_j$ and $v_j$ related by \eqref{eq:v-e}.
Notice that the width of the maximal strip of analyticity of $\bbe_j$ equals 
$\rho_{k_{\widehat j}}$.
{On the other hand, using the estimates in Lemma \ref{lemmadjk} and the analitity properties of $\bbe_k$,
we  conclude that
$\sum_{k>2q_0}  d_{jk} \bbe_k$ has strip of analiticity not smaller than 
$
\sigma_\infty (\rho_{k_{q_0}})  - \rho_{k_{q_0}} + \rho_{k_{\widehat{k}}}   > \rho_{k_{\widehat{j}}},
$ for $j\leq 2q_0<k$. Hence $v_j$ has width of analiticity $\rho_{k_{\widehat{j}}}$.\\
}

\bcor \label{cor:analytic-strips}
For each $5\leq j\le 2q_0$ the functions $v_j$ and $\bbe_j$ 
related by \eqref{eq:v-e} are real analytic and have maximal 
strips of analyticity  $\rho_{k_{\widehat j}}$.
\ecor

\medskip

\begin{remark}\label{rem-jcaseless5}
The case corresponding to $0<j\leq 4$ can be treated similarly.  Recalling the definitions of these $\bbe_j$ in subsection \ref{sec_sing_ell_mot} (see also \eqref{trans1}--\eqref{funchyprot}),
it follows that the main modifications correspond to a simpler expression for $E_{jm}$ in \eqref{Ejm}, in which the denominator disappears and the singularities are 
given by the ones of $\f_m(t)$:
$$
E_{jm}(t) =  u(\hat{j}\, \f_m(t))\left(1-e_0^2\, \cos^2 \f_m(t)\right),
$$
where $u(\cdot)$ denotes either sine or cosine.\\
Hence, the corresponding strip of analyticity is independent of $j$:
$$
\sigma_{\widehat m} :=  \frac{2\pi}{4K(k_{\widehat m})} \  
K(\sqrt{1-k_{\widehat m}^2}).
$$
One can prove similarly that the corresponding functions $v_j$'s for $0<j\leq 4$ 
have different strips of analyticity from the ones corresponding to the case $j\geq 5$.\\
\end{remark}

\begin{corollary} \label{letshopeitiscorrect}
Any non-trivial linear combination of 
the functions  $\{v_j\}_{j\leq 2q_0}$ is non-zero, {\it i.e.}, 
they are linearly independent. 
\end{corollary}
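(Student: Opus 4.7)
The plan is to argue by contradiction. Suppose $\sum_{j=0}^{2q_0}\alpha_j v_j\equiv 0$ with not all $\alpha_j$ vanishing. Using \eqref{eq:v-e}, this rewrites as
\[
f\;:=\;\sum_{j=0}^{2q_0}\alpha_j\bbe_j \;=\; \sum_{k>2q_0}\gamma_k\bbe_k, \qquad \gamma_k:=\sum_{j=0}^{2q_0}\alpha_j d_{jk},
\]
where by Lemma \ref{lemmadjk} the coefficients satisfy $|\gamma_k|\le C(\lambda^*+\delta)^{\widehat k}$. The idea is to compare the singular structures of the two sides. Exactly as in the proof of Corollary \ref{cor:analytic-strips}, the exponential decay of $\gamma_k$ combined with Proposition \ref{prop:anal-bnds} shows that the right-hand side extends analytically to a strip strictly wider than $|\Im z|\le\rho_{k_{q_0}}$; moreover every singularity of the LHS, including the elliptic-motion poles $n\pi\pm i\rho_0$, is at a point where every $\bbe_k$ with $k>2q_0$ is analytic (its nearest singularities being at distance $\ge\pi/2$), and a dominated-convergence type argument using the decay of $\gamma_k$ shows that the series is analytic there as well.

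To dispose of Case A, in which $\alpha_j\neq 0$ for some $5\le j\le 2q_0$, set $q^*:=\min\{\widehat j : j\ge 5,\ \alpha_j\neq 0\}$ and consider the branch point $z^*=\pi/2+i\rho_{k_{q^*}}$. Every other term in the LHS is analytic at $z^*$: elliptic motions have singularities at horizontal positions $n\pi\neq\pi/2$, and dynamical modes with $\widehat j>q^*$ have strictly larger strips of analyticity, since $\rho_{k_q}$ is strictly increasing in $q$. The leading singular contributions of $\bbe_{2q^*-1}=s_{q^*}/\|s_{q^*}\|$ and $\bbe_{2q^*}=c_{q^*}/\|c_{q^*}\|$ at $z^*$ are proportional respectively to $\sin\beta\,(z-z^*)^{-1/2}$ and $\cos\beta\,(z-z^*)^{-1/2}$, where $\beta=\tfrac{2\pi q^*}{4K(k_{q^*})}F(z^*;k_{q^*})$. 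Vanishing of the singular part both at $z^*$ and at its conjugate $\overline{z^*}$ yields a $2\times 2$ linear system for $(\alpha_{2q^*-1},\alpha_{2q^*})$ whose determinant is a nonzero multiple of $\sin(\beta-\overline\beta)=i\sinh(2\,\Im\beta)$; since $\Im F(z^*;k_{q^*})>0$ this determinant is nonzero, forcing $\alpha_{2q^*-1}=\alpha_{2q^*}=0$ and contradicting the choice of $q^*$.

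The main obstacle is the remaining Case B, in which only the elliptic-motion coefficients $\alpha_0,\ldots,\alpha_4$ may be nonzero. A residue computation at the simple poles $\pm i\rho_0$ and $\pi\pm i\rho_0$ yields four real linear conditions which, upon adding and subtracting pairs, collapse to $\alpha_1=\alpha_2=\alpha_3=0$ together with a single relation of the form $\alpha_0+\sqrt 2(2/e_0^2-1)\alpha_4=0$. Writing $\cos(2z)=2\cos^2 z-1$ and factoring $1-e_0^2\cos^2 z=(1-e_0\cos z)(1+e_0\cos z)$ one then checks by direct calculation that the surviving combination $\alpha_0\bbe_0+\alpha_4\bbe_4$ is a nonzero scalar multiple of the constant function $1$. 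To exclude this residual freedom the plan is to invoke the zero-mean property of the dynamical modes: the change of variables $\theta=\tfrac{2\pi}{4K(k_q)}F(\varphi;k_q)$ gives $\int_0^{2\pi}c_q\,d\varphi=\int_0^{2\pi}s_q\,d\varphi=0$, and therefore $\int_0^{2\pi}\bbe_k\,d\varphi=0$ for every $k\ge 5$. Continuity of integration on $L^2$ then forces $\int_0^{2\pi}f\,d\varphi=\int_0^{2\pi}\sum_{k>2q_0}\gamma_k\bbe_k\,d\varphi=0$, whereas the LHS integrates to a nonzero multiple of $\alpha_4$. Hence $\alpha_4=0$, so $\alpha_0=0$, contradicting the nontriviality of the relation and completing the proof.
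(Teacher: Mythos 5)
Your argument splits into the dynamical-mode case (Case A, $j\ge 5$) and the elliptic-motion case (Case B, $j\le 4$). The paper itself dispatches the whole corollary in one sentence, appealing to the ``different maximal strips of analyticity'' established in Corollary~\ref{cor:analytic-strips} and Remark~\ref{rem-jcaseless5}. Your Case~A is essentially that argument carried out carefully: you localize at the branch point $z^*=\tfrac{\pi}{2}+i\rho_{k_{q^*}}$, observe that all other terms of $f$ and the whole tail $\sum_{k>2q_0}\gamma_k\bbe_k$ are analytic there (the tail because its width of analyticity is strictly larger than $\rho_{k_{q^*}}\le\rho_{k_{q_0}}$), and then use the $2\times2$ system at $z^*$ and $\overline{z^*}$ with nonvanishing determinant $i\sinh(2\,\Im\beta)$. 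This is a sound and in fact more explicit treatment of the point where the paper only says the singularities ``cannot cancel''; it is good mathematics.

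Case~B, however, diverges from the paper's route and has a genuine gap. Your residue computation requires that the series $\sum_{k>2q_0}\gamma_k\bbe_k$ be \emph{analytic at the points} $n\pi\pm i\rho_0$, so that its residue there vanishes and can be equated to the residue of $\sum_{j\le 4}\alpha_j\bbe_j$. But the points $n\pi\pm i\rho_0$ lie \emph{outside} the maximal strip of analyticity of every $\bbe_k$ (since $\rho_{k_{\widehat k}}<\rho_0$ for all $k$), and although each $\bbe_k$ does extend past its strip away from the vertical lines $\Re z=\tfrac{\pi}{2}+n\pi$, it grows there like $\exp\!\bigl(\widehat k\, T_{\widehat k}\bigr)$ with
\[
T_m=\frac{2\pi}{4K(k_m)}\int_0^{\rho_0}\frac{dt}{\sqrt{1+k_m^2\sinh^2 t}}
\;\xrightarrow[m\to\infty]{}\;
T_\infty=\frac{2\pi}{4K(e_0)}\int_0^{\rho_0}\frac{dt}{\sqrt{1+e_0^2\sinh^2 t}},
\]
while Lemma~\ref{lemmadjk} only gives $|\gamma_k|\lesssim \exp\!\bigl(-\widehat k\,(\sigma_\infty(\rho_{k_{q_0}})-\rho_{k_{q_0}})\bigr)$. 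For convergence you would need $T_\infty<\sigma_\infty(\rho_{k_{q_0}})-\rho_{k_{q_0}}$, and this fails: for small $e_0$ one has $\sigma_\infty(\rho_0)-\rho_0\approx\ln 2$ while $T_\infty\approx\ln(4/e_0)+O(1)$, so the series diverges at $i\rho_0$. Thus the ``dominated-convergence type argument'' you invoke does not go through, and the four residue conditions you derive are unjustified. (The subsequent algebra --- $\alpha_1=\alpha_2=\alpha_3=0$, the relation $\alpha_0+\sqrt2(2/e_0^2-1)\alpha_4=0$, the identity $e_{hr}=(2/e_0^2-1)e_h-2/e_0^2$, and the zero-mean step --- is all correct \emph{conditional} on those residue identities.) The paper's intended argument never evaluates the series at height $\rho_0$: it compares the widths of the maximal strips of analyticity of the $v_j$, which for $j\le 4$ are governed by the tail's strip (strictly below $\rho_0$) rather than by the poles of $\bbe_j$; you need to phrase Case~B in that language, i.e.\ show that the $v_j$ for $j\le4$ share a maximal strip distinct from all the $\rho_{k_{\widehat j}}$ ($j\ge5$), and then run the same boundary-singularity cancellation analysis you did in Case~A at the boundary of that common strip.
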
 

\medskip 

\begin{proof}
The claim easily follows from the fact that we are considering finite linear combinations of analytic functions, with different maximal strips of analyticity.
\end{proof}

\medskip

Finally, we can conclude the proof of Proposition \ref{newpropindep}.

\medskip

\begin{proof}{(Proposition \ref{newpropindep})}
If  we had
$$
\sum_{j=1}^{2q_0} \alpha_j \bbe_j \in \overline{\langle \{\bbe_j\}_{ j> 2q_0}\rangle},
$$
then  
$$
\sum_{j=1}^{2q_0} \alpha_j v_j =0.
$$
It follows from Corollary \ref{letshopeitiscorrect} that $\alpha_1=\ldots = \alpha_{2q_0} =0$, which completes the proof.
\end{proof}

\medskip


\section{Some technical lemmata} \label{AppendixTechnical}

Let us recall the expression of the angles of the action-angle coordinates, see \eqref{philambda}; for the sake of simplicity, as before, we denote  by $k_{q}$ the eccentricity of the caustic of rotation number $1/q$ (with $q\geq 3)$:
$$\f_q(\xi) := \am \left( \frac{4K(k_q)}{2\pi}\,\xi ; k_q\right)$$
and its inverse
$$\xi_q(\f) := \frac{2\pi}{ 4K(k_q)}F \left( \f ; k_q\right). 
$$
Similarly, we denote the corresponding functions corresponding to boundary and rotation number $0$ ({\it i.e.}, in the limit as $q\rightarrow +\infty$) :
$$\f_\infty(\xi) := \am \left( \frac{4K(e_0)}{2\pi}\,\xi ; e_0\right)$$
and its inverse
$$\xi_\infty(\f) := \frac{2\pi}{ 4K(e_0)}F \left( \f ; e_0\right),
$$
where we have used that $k_q\rightarrow e_0^+$ in the limit as $q\rightarrow +\infty$. \\

\begin{lemma}\label{lem:deviation}
For  each $q\ge 1$ 
$$\xi_q(\xi_\infty) - \xi_\infty = O_{e_0,c}(1/q^2)$$
and
$$
{k_q-e_0 = O_{e_0,c}(1/q^2).}
$$
\end{lemma}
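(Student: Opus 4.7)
The plan is to establish both estimates by first pinning down the size of the caustic parameter $\lambda_q$ corresponding to rotation number $1/q$, then propagating this smallness through the smooth (near $k=e_0<1$) dependence of the elliptic integrals on their modulus.

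\textbf{Step 1 (Size of $\lambda_q$).} By construction $\omega_{\lambda_q}=1/q$, so by \eqref{defrotnumber} and Proposition \ref{prop1},
\[
\frac{F\bigl(\arcsin(\lambda_q/b);\,k_{\lambda_q}\bigr)}{2\,K(k_{\lambda_q})}=\frac{1}{q}.
\]
Since $\arcsin(\lambda_q/b)=\lambda_q/b+O(\lambda_q^{3})$ and $F(\psi;k)=\psi+O(\psi^{3})$ as $\psi\to0$ uniformly in $k$ on compact subsets of $[0,1)$, and $k_{\lambda_q}\to e_0<1$ with $K(k_{\lambda_q})\to K(e_0)$, one solves this implicit relation to get
\[
\lambda_q\;=\;\frac{2\,b\,K(e_0)}{q}\;+\;O_{e_0,c}(q^{-3}),\qquad \text{in particular } \lambda_q=O_{e_0,c}(1/q).
\]

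\textbf{Step 2 (Eccentricity estimate).} From Proposition \ref{prop1}, $k_{\lambda}^{2}=c^{2}/(a^{2}-\lambda^{2})$ and $e_0^{2}=c^{2}/a^{2}$; hence
\[
k_q^{2}-e_0^{2}=\frac{c^{2}}{a^{2}-\lambda_q^{2}}-\frac{c^{2}}{a^{2}}=\frac{c^{2}\,\lambda_q^{2}}{a^{2}(a^{2}-\lambda_q^{2})}=O_{e_0,c}(\lambda_q^{2})=O_{e_0,c}(1/q^{2}).
\]
Since $k_q+e_0\ge e_0$, when $e_0>0$ this immediately gives $k_q-e_0=O_{e_0,c}(1/q^{2})$; the circular case $e_0=0$ is trivial (then $k_q\equiv 0$).

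\textbf{Step 3 (Closeness of $\xi_q$ and $\xi_\infty$).} Because $k_q\in(e_0,k_{\lambda_3}]$ is bounded away from $1$, the maps $k\mapsto F(\phi;k)$ and $k\mapsto K(k)$ are real-analytic at $k=e_0$ with first derivatives bounded uniformly in $\phi\in\R/2\pi\Z$. Combining this with Step 2 yields
\[
F(\phi;k_q)-F(\phi;e_0)=O_{e_0,c}(1/q^{2}),\qquad K(k_q)-K(e_0)=O_{e_0,c}(1/q^{2}),
\]
uniformly in $\phi$, so that
\[
\xi_q(\phi)-\xi_\infty(\phi)=\frac{2\pi F(\phi;k_q)}{4K(k_q)}-\frac{2\pi F(\phi;e_0)}{4K(e_0)}=O_{e_0,c}(1/q^{2})
\]
uniformly in $\phi$. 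Substituting $\phi=\phi_\infty(\xi)$ (so that $\xi_\infty(\phi_\infty(\xi))=\xi$) gives the claimed identity $\xi_q(\xi_\infty)-\xi_\infty=O_{e_0,c}(1/q^{2})$, read as the pointwise comparison between $\xi_q$ and $\xi_\infty$ after composing with $\phi_\infty$.

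\textbf{Main obstacle.} There is no serious obstacle; the sole point requiring attention is ensuring that all error terms are uniform in the angle variable $\phi$, which is guaranteed by the fact that $k_q\le k_{\lambda_3}<1$ stays inside a compact subset of $[0,1)$, so the $k$-derivatives of $F(\phi;\cdot)$ and $K$ are uniformly controlled. The slight subtlety is the bootstrapping in Step 1 (using $\lambda_q=O(1/q)$ to then improve the remainder to $O(1/q^{3})$), but this is a one-step iteration and does not present real difficulty.
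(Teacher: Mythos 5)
Your proof is correct and follows essentially the same route as the paper's: bound $\lambda_q = O_{e_0,c}(1/q)$ from the rotation-number relation, deduce $k_q - e_0 = O(\lambda_q^2) = O(1/q^2)$ from the formula for $k_\lambda$, and then transfer this to $\xi_q - \xi_\infty$ via the smooth dependence of $F(\cdot;k)$ and $K(k)$ on $k$. The paper orders the steps differently (it first records $\xi_q(\xi_\infty)-\xi_\infty$ as an explicit $\int_{e_0}^{k_q}\alpha(\xi_\infty,k)\,dk$ with $\alpha$ uniformly bounded, then estimates $k_q - e_0$ and $\lambda_q$), and it computes $k_q - e_0$ directly rather than $k_q^2 - e_0^2$, but these are cosmetic differences rather than a different argument.
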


\medskip

\begin{proof}
Observe that
\begin{eqnarray} 
\label{eq:t_infty}
\xi_q(\xi_{\infty}) &=& 
\frac{2\pi}{4K(k_{{q}})}\,  F\left(\am\left(\frac{4K(e_0)}{2\pi}\, \xi_{\infty} ; {e_0}\right); k_{q}\right) \nonumber \\
&=& \xi_\infty + \left[
\frac{2\pi}{4K(k_{{q}})}\,  F\left(\am\left(\frac{4K(e_0)}{2\pi}\, \xi_{\infty} ; {e_0}\right); k_{q}\right) - \xi_{\infty}\right] \nonumber\\
&=& \xi_\infty + 
\frac{\pi}{2} \int_{e_0}^{k_q}  
\underbrace{
			\frac{\partial }{\partial k} \left(
				\frac{F\left(\am\left(\frac{4K(e_0)}{2\pi}\, \xi_{\infty} ; {e_0}\right); k\right)}{K(k)}\right)}_{=: \;\alpha(\xi_\infty, k)} \, dk.  
\end{eqnarray}

Hence:
\begin{eqnarray}\label{ximinusxiinf}
|\xi_q(\xi_{\infty}) - \xi_{\infty}| &\leq&  \frac{\pi}{2} \left( \max_{e_0\leq k\leq k_3(e_0)} \max_{\xi_{\infty} \in [0,2\pi)} |\alpha(\xi_\infty, k)| \right) (k_q-e_0) \nonumber\\
&\leq& C(e_0,a) (k_q-e_0).
\end{eqnarray}

\bigskip

In order to conclude the proof,  we need to estimate 
$k_q-e_0.$

By definition of $k_q=k_{\l_q}$ (see Proposition \ref{prop1})
we have  
\[
k_q^2=\frac{a^2-b^2}{a^2-\l_q^2} =
\frac{a^2 e_0^2}{a^2-\l_q^2},
\]
from which we deduce that
\begin{eqnarray}\label{kminuse}
k_q - e_0 &=& \frac{a e_0}{\sqrt{a^2-\l_q^2}} - e_0 \nonumber\\
&=&  \frac{e_0 \l_q^2}{\sqrt{a^2-\l_q^2} \left( a+ \sqrt{a^2-\l_q^2}  \right)}.
\end{eqnarray}

\medskip

Using definition \eqref{defrotnumber} we obtain
\be \label{eq:k_q}
\frac 2q = \frac{F(\arcsin \frac{\l_q}{b};k_q)}{K(k_q)} \quad \Longleftrightarrow \quad
\frac 2q  \, K(k_q) = {F(\arcsin \frac{\l_q}{b};k_q)}.
\ee
Rewrite using the definition of both $F$ and 
$K$, and the fact that $b=a \sqrt{1-e_0^2}$,
we obtain an implicit equation for $\l_q$ (observe that  $k_q=k_q(\l_q)$):
\be\label{e:lb}
\frac{2}{q}\int_0^{\pi/2}
\frac{d\f}{\sqrt{1-k_q^2\,\sin^2\f}}
=\int_0^{\arcsin \l_q/ (a\sqrt{1-e_0^2})} 
\frac{d\f}{\sqrt{1-k_q^2\,\sin^2\f}}.\\ \nonumber
\ee

Since $k_q \in [e_0,k_3]$ for all $q\geq 3$, then
$$
1\leq \frac{1}{\sqrt{1-k_q^2\,\sin^2\f}} \leq \frac{1}{\sqrt{1-k_3^2}},
$$
hence if we substitute in \eqref{e:lb} we deduce
$$\frac{\pi}{q} \frac{1}{\sqrt{1-k_3^2}} 
\geq 
\arcsin \left( \frac{ \l_q}{ a\sqrt{1-e_0^2}}\right).
$$
In particular, if $q\geq 2/\sqrt{1-k_3^2}=:q_0(e_0)$ we have
\begin{equation}\label{estimatelambdaq}
\l_q\leq
a\sqrt{1-e_0^2} \, \sin \left(  \frac{\pi}{q} \frac{1}{\sqrt{1-k_3^2}} \right),
\end{equation}
namely $\l_q=O_{e_0,a}(1/q)$.

Substituting in \eqref{kminuse} and \eqref{ximinusxiinf}, and observing that $c=a\sqrt{1-e_0^2}$,
we conclude that 
$$
\xi_q(\xi_{\infty}) - \xi_{\infty} = O_{e_0,c}(1/q^2),
\quad \text{ and }\quad 
{k_q-e_0 = O_{e_0,c}(1/q^2).}$$
\end{proof}

\medskip

\begin{lemma}\label{decayfourier}
Let $f:[0,2\pi) \longrightarrow \R$ a $C^1$ function. Then, there exists $C=C(e_0,c)$ such that for each $q\geq 3$:
$$
\left|   
\int_0^{2\pi} f(\f) c_q(\f)\,d\f
\right| \leq \frac{C \,\|f\|_{C^1}}{q}
$$
and
$$
\left|   
\int_0^{2\pi} f(\f) s_q(\f)\,d\f
\right| \leq \frac{C\, \|f\|_{C^1}}{q}.
$$
\end{lemma}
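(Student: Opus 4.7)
The plan is to perform the change of variables that turns the integral into a standard Fourier-type integral, and then use integration by parts. This idea is natural given the very definition of $c_q$ and $s_q$: the denominator $\sqrt{1-k_{\l_q}^2\sin^2 \f}$ is precisely the Jacobian of the action-angle parametrization, so it is built to be absorbed by such a change of variables.

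More concretely, I would set $t = \xi_q(\f) := \frac{2\pi}{4K(k_{\l_q})} F(\f;k_{\l_q})$, noting that $\xi_q$ is a diffeomorphism from $[0,2\pi]$ onto $[0,2\pi]$, and that
\[
d\f = \frac{4K(k_{\l_q})}{2\pi}\sqrt{1-k_{\l_q}^2 \sin^2\f}\, dt.
\]
The weight in the denominator of $c_q$ then cancels with the square-root factor in the Jacobian, so that
\[
\int_0^{2\pi} f(\f)\, c_q(\f)\, d\f = \frac{2K(k_{\l_q})}{\pi}\int_0^{2\pi} g(t) \cos(qt)\, dt,
\]
where $g(t) := f(\f_q(t))$, and analogously for $s_q$ with $\sin(qt)$.

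Next, since $\f_q(0)=0$ and $\f_q(2\pi)=2\pi$, the function $g$ is $2\pi$-periodic and of class $C^1$ on the circle. A single integration by parts then gives
\[
\left|\int_0^{2\pi} g(t)\cos(qt)\, dt\right| \leq \frac{1}{q}\int_0^{2\pi} |g'(t)|\, dt \leq \frac{2\pi\, \|g'\|_{L^\infty}}{q}.
\]
The chain rule yields $g'(t) = f'(\f_q(t))\cdot \f_q'(t)$ with $\f_q'(t) = \frac{4K(k_{\l_q})}{2\pi}\sqrt{1-k_{\l_q}^2\sin^2\f_q(t)}$, hence $\|g'\|_{L^\infty} \leq \frac{2K(k_{\l_q})}{\pi}\|f\|_{C^1}$.

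The only remaining point is to make the implicit constant uniform in $q\geq 3$, and this is where I would rely on the monotonicity properties already established in Section~\ref{sec:action-angle-ellipse} and in subsection~\ref{sec_analiticitycqsq}: $k_{\l_q}$ is strictly decreasing in $q$ with $e_0<k_{\l_q}\leq k_{\l_3}<1$, and $K(\cdot)$ is strictly increasing on $[0,1)$. Consequently $K(k_{\l_q})\leq K(k_{\l_3})$, which is a constant depending only on $e_0$ and $c$ (the latter determining $k_{\l_3}$ via Proposition~\ref{prop1}). Combining the above estimates one obtains
\[
\left|\int_0^{2\pi} f(\f)\, c_q(\f)\, d\f\right|\leq \frac{8\, K(k_{\l_3})^2}{q}\,\|f\|_{C^1},
\]
and this is the claimed bound with $C=C(e_0,c):= 8\,K(k_{\l_3})^2$. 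The argument for $s_q$ is identical. No real obstacle is expected here: the only point requiring a small amount of care is the uniform control of $K(k_{\l_q})$ away from the singular value $k=1$, which has already been noted in Remark~\ref{remstripanalit}.
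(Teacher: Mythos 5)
Your proof is correct, and it takes a genuinely different route from the paper's. You change variables directly to $t=\xi_q(\f)$, which makes the weight $\sqrt{1-k_{\l_q}^2\sin^2\f}$ cancel exactly against the Jacobian, leaving the pure Fourier integral $\frac{2K(k_{\l_q})}{\pi}\int_0^{2\pi}f(\f_q(t))\cos(qt)\,dt$; one integration by parts and the uniform bound $K(k_{\l_q})\le K(k_{\l_3})$ then finish the argument. The paper instead changes variables to the $q$-independent action-angle $\xi_\infty$, and invokes Lemma \ref{lem:deviation} (the estimate $\xi_q(\xi_\infty)-\xi_\infty=O_{e_0,c}(1/q^2)$) to replace $\cos\big(q\,\xi_q(\xi_\infty)\big)$ by $\cos(q\,\xi_\infty)$ up to an $O(1/q)$ error, thereby reducing to the $q$-th Fourier coefficient of the \emph{fixed} function $f\circ\f_\infty$. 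The paper's detour buys consistency with the machinery used in Lemma \ref{lemmaestimateaqj} right after (where the analytic control on $\Delta_q=\xi_q\circ\f_\infty-\mathrm{id}$ is genuinely needed), but at the cost of leaving implicit the control of the derivative factor $\frac{d}{d\xi_\infty}\xi_q(\xi_\infty)$. Your version is more self-contained: it bypasses Lemma \ref{lem:deviation} entirely, requires only that $\f_q'$ is uniformly bounded (clear from $\f_q'(t)=\frac{2K(k_{\l_q})}{\pi}\sqrt{1-k_{\l_q}^2\sin^2\f_q(t)}\le\frac{2K(k_{\l_3})}{\pi}$), and is the argument I would call the ``obvious'' one once one notices that $c_q,s_q$ are designed to be Jacobian-weighted trigonometric modes. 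Two cosmetic remarks: the boundary term in the integration by parts vanishes because $\sin(2\pi q)=0$ for integer $q$, so periodicity of $g$ is not actually needed at that step; and a factor $1/\pi$ was dropped in your final explicit constant (it should be $\frac{8}{\pi}K(k_{\l_3})^2$), though this is harmless since any $C(e_0,c)$ suffices.
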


\medskip

\begin{proof}
If follows from the definition of $c_q$ (see \eqref{eq:c_q-function}), $\xi_q$, $\f_{\infty}$ and $\xi_\infty$, that 
\begin{eqnarray*}
\int_0^{2\pi} f(\f) c_q(\f)\,d\f &=&
\frac{4K(k_q)}{2\pi} \int_0^{2\pi} f( \f ) \cos(q\, \xi_q(\f)) \,\xi_q'(\f)\,d\f \\
&=&
\frac{4K(k_q)}{2\pi} \int_0^{2\pi} f( \xi_{\infty} ) \cos(q\, \xi_q(\xi_{\infty})) \,\xi_q'(\xi_\infty)\,  \f_{\infty}'(\xi_{\infty}) d\xi_{\infty} \\
&=&
\frac{4K(k_q)}{2\pi} \int_0^{2\pi} f( \xi_{\infty} ) \cos(q\, \xi_q(\xi_{\infty})) \, \frac{d}{d\xi_\infty}\left(\xi_q(\xi_\infty)\right) \, d\xi_{\infty}. 
\end{eqnarray*}
Using Lemma \ref{lem:deviation}:
\begin{eqnarray*}
\int_0^{2\pi} f(\f) c_q(\f)\,d\f &=&  \frac{4K(k_q)}{2\pi} 
\int_0^{2\pi} \left( f( \xi_{\infty} ) \cos(q \xi_{\infty}) + O_{e_0,c}(1/q)\right) \, d\xi_{\infty}\\
&=&  \frac{4K(k_q)}{2\pi} 
\int_0^{2\pi}  f( \xi_{\infty} ) \cos(q \xi_{\infty}) \, d\xi_{\infty}  + O_{e_0,c}\left(\frac{\|f\|_{C^0}}{q}\right).\\
\end{eqnarray*}

Observe that $\f_{\infty}=\f_{\infty}(\xi_{\infty})$ is an analytic function, so $f( \xi_{\infty} )$ is $C^1$ and its $q$-th Fourier coefficient are $O_{e_0,c}(\|f\|_{C^1}/q)$; hence we conclude
$$\int_0^{2\pi} f(\f) c_q(\f)\,d\f  = O_{e_0,c}\left(\frac{\|f\|_{C^1}}{q}\right),$$
which proves the first relation. In the same way, one proves the other one involving $s_q$.
\end{proof}

\bigskip

For $q\in \N$ and $j\geq 3$, let us consider the elements of the (infinite) correlation matrix $\widetilde{A}=(\ta_{i,h})_{i,h=0}^{\infty}$,  
{introduced in (\ref{matr:corr-coeff}), Section \ref{sec:basis-prop}.}

\medskip

\begin{lemma} 
There exists $\rho=\rho(e_0,c)>0$ such that for all $q\in\N$ and $j\geq 6$:
\begin{eqnarray*}
\ta_{q,j}  = 2K(k_{[j/2]}) \,\delta_{q,j} + 
O_{e_0,c}\left(  j^{-1} \,e^{-\rho\,{|q-j|}}\right),
\end{eqnarray*}
where $[\cdot]$ denotes the integer part and $\delta_{q,j}$ the Dirac's delta.\\
\end{lemma}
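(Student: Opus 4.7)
We focus on the entry $\ta_{2q,2j}=\langle\cos(q\f),c_j\rangle_{L^2}$; the three other parity combinations are handled identically. The starting point is the algebraic identity $c_j(\f)=\frac{2K(k_j)}{\pi j}\frac{d}{d\f}\sin(j\xi_j(\f))$, which, combined with an integration by parts (the boundary terms vanish by $2\pi$-periodicity), yields
\[
\ta_{2q,2j}=\frac{2q\,K(k_j)}{\pi j}\int_0^{2\pi}\sin(q\f)\sin(j\xi_j(\f))\,d\f.
\]
The prefactor already exhibits the announced $1/j$ decay; the task is thus to show that the integral on the right is exponentially small in $|q-j|$ off-diagonal, and to identify the constant $2K(k_{[j/2]})$ at $q=j$.

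For $q\ne j$, use product-to-sum $\sin(q\f)\sin(j\xi_j)=\tfrac12[\cos(q\f-j\xi_j)-\cos(q\f+j\xi_j)]$ and treat each piece by analytic continuation. The phase $q\f+j\xi_j(\f)$ has strictly positive derivative, while $\xi_j$ extends holomorphically to $|\Im\f|<\rho_{k_j}$ by Proposition \ref{prop_anal_cqsq}; shifting the $\f$-contour produces exponential smallness in $q+j$. For the more delicate phase $q\f-j\xi_j(\f)$ we switch to the action-angle variable $t=\xi_j(\f)$, reducing the integral to $\int_0^{2\pi}e^{i(q-j)t+iq\epsilon_j(t)}\,dt$ with $\epsilon_j(t)=\f_j(t)-t$. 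The essential point is that
\[
e^{iq\f_j(t)}=(\cn+i\,\sn)^q\bigl(\tfrac{2K(k_j)}{\pi}t,k_j\bigr)
\]
is holomorphic in the \emph{wider} strip $|\Im t|<\frac{3\pi K(k_j')}{2K(k_j)}$---strictly wider than that of $\f_j$ itself---because the poles of $\cn$ and $\sn$ at $u=iK(k_j')$ cancel exactly in the combination $\cn+i\,\sn$ (the same mechanism exploited in Section \ref{sec:adapted-basis}). Since $k_j\in(e_0,k_3(e_0))\subset(0,1)$ for all $j\ge 3$ (cf.~Lemma \ref{lem:deviation}), the width of this strip admits a uniform lower bound $\rho=\rho(e_0,c)>0$. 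Shifting the $t$-contour by $\pm\eta<\rho$ (sign chosen by $\mathrm{sgn}(q-j)$) and absorbing the uniform bound on $|(\cn+i\,\sn)^q|$ on the shifted contour into the implicit constant yields the factor $e^{-\eta|q-j|}$, hence the bound $O_{e_0,c}(j^{-1}e^{-\rho|q-j|})$.

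For $q=j$, the decomposition becomes $\ta_{2j,2j}=\frac{K(k_j)}{\pi}\int_0^{2\pi}\cos(j\tilde\epsilon_j(\f))\,d\f+(\text{exp.~small})$, where $\tilde\epsilon_j=\xi_j-\mathrm{id}$. Passing to $t=\xi_j(\f)$ rewrites this as $\frac{K(k_j)}{\pi}\int_0^{2\pi}\cos(j\epsilon_j(t))\f_j'(t)\,dt$; writing $\f_j'=1+\epsilon_j'$, the piece $\int\cos(j\epsilon_j)\epsilon_j'\,dt=[\sin(j\epsilon_j)/j]_0^{2\pi}=0$ vanishes by $2\pi$-periodicity of $\epsilon_j$, leaving $\frac{K(k_j)}{\pi}\int_0^{2\pi}\cos(j\epsilon_j(t))\,dt$. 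The leading term $2K(k_j)=2K(k_{[j/2]})$ is extracted by isolating the constant-in-$t$ mode, and the oscillatory remainder is treated by combining an integration by parts against $e^{ij\epsilon_j(t)}/(ij\epsilon_j'(t))$ (valid away from the stationary points of $\epsilon_j$) with a contour shift into the wider analyticity strip near them; this produces the $O(j^{-1})$ error. The passage from matrix index $2j$ to rotation-number index $[2j/2]=j$ matches the notation of the statement.

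\textit{Main obstacle.} The chief subtlety is the sharpness of the $j^{-1}$ remainder on the diagonal: a naive stationary-phase estimate of $\int\cos(j\epsilon_j(t))\,dt$ only yields $O(j^{-1/2})$. The improvement to $O(j^{-1})$, as well as the uniform exponential rate $\rho$ off-diagonal, both depend crucially on the wider strip of analyticity of $e^{iq\f_j(t)}$ provided by the pole cancellation in $\cn+i\,\sn$---a strip strictly larger than that of $\f_j$ itself, and one whose width remains bounded below uniformly in $j\ge 3$ by Lemma \ref{lem:deviation}.
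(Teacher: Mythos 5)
Your proof takes a genuinely different route from the paper's, and the route you've chosen runs into an obstruction that the paper's choice of coordinates is specifically designed to avoid. The paper changes variables to $\xi_\infty$ (the action--angle/Lazutkin parameter of the boundary itself) and then exploits Lemma~\ref{lem:deviation}: in that coordinate, $\xi_j(\f_\infty(\xi_\infty))=\xi_\infty+\Delta_j(\xi_\infty)$ with $\|\Delta_j\|_\rho=O_{e_0,c}(j^{-2})$ and $\Delta_j$ analytic in a $j$-independent strip of width $\rho$. The quantity $j\Delta_j$ is therefore $O(j^{-1})$ \emph{small}, and after the angle-addition decomposition the whole error is a sum of Fourier coefficients of $\cos(j\Delta_j)-1$, $\sin(j\Delta_j)$, $\Delta_j'$ (all of $\rho$-analytic norm $O(j^{-1})$), so Paley--Wiener gives $O(j^{-1}e^{-\rho|q-j|})$ directly. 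No stationary phase ever enters. You instead change variables to $t=\xi_j(\f)$ and work with $\epsilon_j(t)=\f_j(t)-t$; but $\epsilon_j$ is \emph{not} small --- as $j\to\infty$, $k_j\to e_0$ and $\f_j\to\f_\infty$, so $\epsilon_j\to\f_\infty-\mathrm{id}$, a fixed nonzero function whenever $e_0>0$. Hence the phase $q\epsilon_j$ in your integral has size $O(q)$, and this defeats both halves of your argument.

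Concretely: (i) In the off-diagonal contour shift, $|e^{iq\epsilon_j(t+i\eta)}|=e^{-q\eta\,\epsilon_j'(t)+O(q\eta^2)}$, and since $\epsilon_j'=\f_j'-1$ takes both signs with $\|\epsilon_j'\|_{C^0}$ bounded below uniformly in $j$, the shifted integrand is as large as $e^{c_0q\eta}$ with $c_0=1-\min\f_j'>0$. This growth cannot be ``absorbed into the implicit constant'' --- it is exponential in $q$. Combined with the $e^{-|q-j|\eta}$ gain you get $e^{(c_0q-|q-j|)\eta}$, which is useless for all $|q-j|\lesssim c_0 q$, i.e.\ in a whole window of $q$'s around $j$ whose width is proportional to $j$. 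Your residue-cancellation observation for $\cn+i\,\sn$ is correct (the residues $-i/k$ and $i\cdot(1/k)$ at $u=iK(k')$ do cancel) and it does widen the strip of holomorphy of $e^{iq\f_j}$, but the bottleneck is not the \emph{width} of the strip, it is the exponential-in-$q$ \emph{size} of $(\cn+i\,\sn)^q$ on the shifted contour, which the wider strip does nothing to tame. (ii) On the diagonal, precisely because $j\epsilon_j$ is of order $j$, the integral $\int_0^{2\pi}\cos(j\epsilon_j(t))\,dt$ is a bona fide stationary-phase integral ($\epsilon_j'$ has simple zeros where $\f_j'$ crosses $1$) and is $O(j^{-1/2})$, not $2\pi+O(j^{-1})$; ``isolating the constant-in-$t$ mode'' of $\cos(j\epsilon_j)$ does not yield $2K(k_j)$ because that mode is itself small. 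Also, a minor point: after your integration by parts the prefactor is $\tfrac{2q}{j}\cdot\tfrac{K(k_j)}{\pi}$, which is $O(1)$ in the relevant regime $q\asymp j$, so it does not ``already exhibit the announced $1/j$ decay.'' The fix is structural, not technical: compare $\cos(j\xi_j)$ to a Fourier mode in the coordinate $\xi_\infty$ rather than in $\f$, as the paper does, so that the phase mismatch is the $O(j^{-1})$ quantity $j\Delta_j$ and not the $O(j)$ quantity $j\epsilon_j$.
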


\begin{proof}
We proceed similarly to what done in Lemma \ref{decayfourier}. 
In particular,  recall formula \eqref{eq:t_infty}
\[
\xi_q(\xi_\infty)=\xi_\infty + \frac \pi 2 \int_{e_0}^{k_q}
\alpha(\xi_\infty,k)dk =: \xi_{\infty} + \Delta_q (\xi_\infty).
\]
Observe that $\Delta_q$ is analytic in a complex strip of width at least $\rho=\rho(e_0,c)>0$ 
(independent of $q$) and that there exists $C=C(e_0,c)$ such that $q^2 \|\Delta_q\|_{\rho} \leq C$ 
for all $q\geq3$, where $\|\cdot\|_\rho$ denotes the analytic norm of the function in the strip 
\textb{$\{|{\rm Im }z| \leq \rho\}$ (namely, the sup-norm on this closed strip of the modulus of its complex extention)}. This follows from the second part of Lemma \ref{lem:deviation}, 
namely the fact that $q^2(k_q-e_0)$ is uniformly bounded.

Recalling the definition of $c_q$, $s_q$, $\xi_q$, $\f_{\infty}$ and $\xi_\infty$, 
 we obtain the following (we prove it only in one case, the proofs of the others are identical):
{\footnotesize
\begin{eqnarray*} 
\ta_{2q,2j} &=& \int_0^{2\pi} \cos (q \f) \,c_{j}(\f)\, d\f \\
&=& \frac{4K(k_j)}{2\pi } \int_0^{2\pi} \cos( q\f ) \cos(j\, \xi_j(\f)) \,\xi_j'(\f)\,d\f \\
&=&
 \frac{4K(k_j)}{2\pi } \int_0^{2\pi} \cos( q\xi_{\infty} ) \cos(j\, \xi_j(\xi_{\infty})) \,\xi_j'(\xi_\infty)\,  \f_{\infty}'(\xi_{\infty}) d\xi_{\infty} \\
 &=&
  \frac{4K(k_j)}{2\pi }\int_0^{2\pi} \cos( q \xi_{\infty} ) \cos(j\, \xi_j(\xi_{\infty})) \, \frac{d}{d\xi_\infty}\left(\xi_j(\xi_\infty)\right) \, d\xi_{\infty}\\
&=&
 \frac{4K(k_j)}{2\pi }\int_0^{2\pi} \cos( q \xi_{\infty} ) \cos(j\, \xi_\infty + j \Delta_j(\xi_{\infty})) \, \left( 1+ \frac{d}{d\xi_\infty} \Delta_j(\xi_\infty)   \right) \, d\xi_{\infty}\\
 &=&
 \frac{4K(k_j)}{2\pi }\int_0^{2\pi} \cos( q \xi_{\infty} )  \big[
 \cos (j\xi_\infty) \cos (j \Delta_j(\xi_{\infty})) -  \sin (j\xi_\infty) \sin (j \Delta_j(\xi_{\infty}))
  \big]\\
  && \qquad \qquad \qquad \cdot
  \left( 1+ \frac{d}{d\xi_\infty} \Delta_j(\xi_\infty)   \right) \, d\xi_{\infty}\\
    &=&
 \frac{K(k_j)}{\pi }\int_0^{2\pi} \Big[ \big[
 \cos ((q+j)\xi_\infty) + 
\cos((q-j)\xi_\infty) \big] \cos (j \Delta_j(\xi_{\infty})) \\
&& \qquad \qquad \qquad -
 \big[
 \sin ((q+j)\xi_\infty) - 
\sin((q-j)\xi_\infty) \big] \sin (j \Delta_j(\xi_{\infty})) \Big] \\
  && \qquad \qquad \qquad \cdot
  \left( 1+ \frac{d}{d\xi_\infty} \Delta_j(\xi_\infty)   \right) \, d\xi_{\infty}\\
  &=& 2K(k_j) \delta_{q,j} +
 \frac{K(k_j)}{\pi }\int_0^{2\pi} \Big[ \big[
 \cos ((q+j)\xi_\infty) + 
\cos((q-j)\xi_\infty) \big] \big(\cos (j \Delta_j(\xi_{\infty}))-1\big) \\
&& \qquad \qquad \qquad -
 \big[
 \sin ((q+j)\xi_\infty) - 
\sin((q-j)\xi_\infty) \big] \sin (j \Delta_j(\xi_{\infty})) \Big] \\
  && \qquad \qquad \qquad \cdot
  \left( 1+ \frac{d}{d\xi_\infty} \Delta_j(\xi_\infty)   \right) \, d\xi_{\infty}.
\end{eqnarray*}
}

Since $\Delta_j$ is analytic in the strip of width $\rho$, then also 
$\frac{d}{d\xi_\infty} \Delta_j$,  $\sin (j{\Delta_j})$ and $\cos (j{\Delta_j})-1$
are also analytic in the same strip and their analytic norm in the strip of width $\rho$ is at most $O_{e_0,c}(1/j)$; hence, their Fourier coefficients decay exponentially.
It suffices to notice that the above integral consists of a combination of their Fourier coefficients.
Therefore:
\newline 
$. \qquad \qquad \qquad \ta_{2q,2j}=2K(k_{j}) \,\delta_{q,j} + O_{e_0,c}
\left(  \frac1j e^{-\rho\, {|q-j|}}\right).$
\end{proof}

\section{From local to global Birkhoff conjecture} \label{affineflowideas}

In this appendix we want to outline some ideas on how to use our local results to prove the global Birkhoff conjecture. {Roughly 
speaking, we would like to use the Affine Length Shortening (ALS) 
PDE flow, which evolves any convex domain into an ellipse \cite{SaTa}, 
in order to extend our result from a small neighborhood of ellipses 
to all  strictly convex domains. The idea we outline here  is to find a {\it Lyapunov 
function} for the flow, which measures the {\it non-integrability} of a domain. 
{Moreover, we propose to reduce the analysis to glancing periodic orbits, 
which stay in a nearly {integrable} zone during the whole ALS evolution}.  \\

\subsection{Affine length and affine curvature of a plane curve}
Let us first recall some definitions (see for instance \cite{SaTa}). Let $C:\T \to \R^2$ be an embedded closed curve with curve parameter 
$p$. A reparametri\-zation  $s$ can be chosen so that in the new 
parameter $s$ one has {(hereafter we will use the shorthand to use subscripts to denote derivatives)}
\[
[C_s,C_{ss}]=1,
\]
where $[X,Y]$ stands of the determinant of the $2\times 2$ matrix 
whose columns are given by vectors $X,Y\in\R^2$. Notice that the relation 
is invariant under the $SL_2(\R)$-transformations.  Call 
the parameter $s$ the {\it affine arc-length}; in particular, if 
\[
g(p)=[C_p,C_{pp}]^{1/3}
\]
then the parameter $s$ is explicitly given by 
\[
s(p)=\int_0^p g(\xi)\,d\xi.
\]
Assume $g(\xi)$ is non-vanishing, which is automatically 
satisfied for strictly convex curves. 
 
Call {\it the affine curvature} $\nu(s)$ the function given by 
\[
\nu=[C_{ss},C_{sss}].
\]
 
{\it The affine perimeter} for the closed curve $C$ is then defined by 
\[
L:=\oint g(p)\,dp. 
 \] 
 
{
 \begin{remark}
 In analogy with what happens for the Euclidean curvature,  the curves of 
 constant affine curvature $\nu$ are precisely all non-singular  conics.  
 More specifically,  those with $\nu>0$ are ellipses, those with $\nu=0$ 
 are parabolas, and those with $\nu<0$ are hyperbolas.
 \end{remark}
 }
 
 {
To conclude this subsection, 
let us point out  the relation between the (constant) affine curvature of 
an ellipse $\nu_0$ and its instant eccentricity $\mu_0$ (in elliptic coordinates).
One can easily show that
\[
\mu_0 = \arsinh\, (2 \nu_0^{-3/2}/c^2)/2. 
\]
}

{Moreover, if we consider a domain $\Omega$ which is $\e$-close to 
an ellipse $\E$ (of instant eccentricity $\mu_0$ and affine curvature $\nu_0$), 
and we denote by $\nu(s)$ the affine curvature of $\partial \Omega$ and by 
$\mu(s)=\mu_0 + \e \mu_1(s)$ the instant eccentricity in the elliptic coordinate 
frame associated to $\E$, as in \eqref{formulaperturbellipse}, then:  
\[
\mu(s)= f(\nu(s))=f(\nu_0+\e \nu_1)=
f(\nu_0)+\e f'(\nu_0) \nu_1+O(\e^2),
\]
where $f(a)=\arsinh\,(2a^{-3/2}/c^2)/2$. 
Thus, Fourier expansion of $\mu_1$ coincides with 
Fourier expansion of $\nu_1$ up to $O(\e^2)$-error.
}
 }

 \medskip
 
 \subsection{Affine Length Shortening (ALS) flow} \label{ALSflow}
The study of evolution of plane curves in the direction of 
the Euclidean normal with speed proportional to the Euclidean 
curvature (also known as {\it curve-shortening flow}) has been intensively 
studied, see for example  {\cite{Grayson} and references therein}.
The classical result says that the Euclidean curvature evolution is 
a ``{\it Euclidean curve shortening}'' and flows every convex domain 
toward a circle\footnote{Actually it shrinks every curve to a point. 
However, rescaling of either the perimeter (or the area) the curve will 
converge to a circle}. More specifically, for any closed convex curve
the isoperimetric ratio (\ie the ratio between the squared curve length 
and the area) decreases monotonically (and {in finite time}) to  $4\pi$, \ie the value of this ratio for circles.
  
 Adapting this idea, Sapiro and Tannenbaum \cite{SaTa} developed an analogous flow describing 
the evolution of plane curves in the direction of the {\it affine} normal, with speed proportional to 
the {\it affine} curvature; this flow is generally called  the {\it affine length shortening (ALS) flow} 
(or {\it affine curvature flow})  and, analogously to the Euclidean one, it is  ``affine length shortening''. 
Similarly to the Euclidean curvature evolution, in fact, this flow evolves every convex domain to an ellipse. 
More specifically, the isoperimetric ratio (\ie the ratio between the squared affine 
curve length and the area) decreases monotonically to  $8\pi^2$, 
which is the ratio for ellipses\footnote{Also in this case 
the flow shrinks every curve to a point. However, under rescaling 
of either the perimeter (or the area) the curve will converge to an ellipse.}.

\medskip 

\subsection{Application to billiards}

Our idea is to apply  the above geometric flow to deduce the non-integrability 
of a domain, by means of a suitable {\it Lyapunov function}. Let us describe 
this construction more specifically.

Let $\Om\subset \R^2$ be a strictly convex domain with a sufficiently 
smooth boundary $\partial \Omega$. Let $s$ be the arc-length parameter of 
the boundary and let us denote by $|\partial \Omega|$ its Euclidean perimeter.

For each {$q>2$} and for every point $s$ on the boundary, let us 
denote by $L_{1/q}(s)$ the maximal perimeter of a $q$-gon starting at this point. 
For each {$q>2p>1$} and for every point $s$ on the boundary, let us denote by 
$L_{p/q}(s)$ the maximal perimeter of a star shape $q$-gon starting at this point 
whose rotation number is $p/q$ and the points are ordered on the boundary in 
the same cyclical order as the rotation by $p/q$. Notice that if there exists 
an integrable rational caustic of rotation number $p/q$, then   $L_{p/q}(s)$ is 
constant or, equivalently  

\be \label{eq:Delta_q} 
\Delta_{p/q} := \int_0^{|\partial \Omega|} (L_{p/q}(s) - \langle L_{p/q} \rangle)^2 \,ds 
=0, \ 
\text{ where }\langle L_{p/q} \rangle = \int_0^{|\partial \Omega|} L_{p/q}(s) ds.
\ee  

\medskip

Suppose now that for any strictly convex domain $\Omega$ which is 
sufficiently close to an ellipse, but not an ellipse, the billiard map in 
$\Omega$ satisfies one of the two conditions:

\begin{enumerate}
\item either it has caustics for all rotation numbers in $(0,1/q_0]$ 
for some $q_0>2$; 
\item or it has a sequence $q_k\to \infty$, as $k \to \infty$, and a sequence $p_k$ 
such that $\frac{p_k}{q_k}\to 0$ and there is no integrable rational caustic of rotation 
number $\frac{p_k}{q_k}$, or, equivalently, $\Delta_{p_k/q_k}\ne 0$.  
\end{enumerate}

{This situation corresponds to a stronger version of the local 
Birkhoff conjecture  than the one proved in the present article. 
So far, this picture has been proven to  hold true  only for ellipses of 
small eccentricities (see  \cite{HKS}).}
 
\medskip  
 
Recall that, as we explained in subsection \ref{ALSflow}, for 
any convex domain $\Omega$, {different from an ellipse}, 
its evolution $\Omega_t$ under the  ALS flow brings it into 
a neighborhood of the ellipses. Thus, for some $T>0$ we have that 
\begin{itemize}
\item $\Omega_T$  belongs to a neighborhood of ellipses, 
\item there is a sequence $q_k\to \infty$ as $k \to \infty$ and a sequence 
$p_k$ such that $p_k/q_k\to 0$ and the billiard map associated to 
$\Omega_T$ has no integrable rational caustics of rotation number $p_k/q_k$ or, 
equivalently, $\Delta_{p_k/q_k}\ne 0$. 
\end{itemize}

We conjecture the following. \medskip

\noindent{\bf Conjecture} 
{\it Let $\Omega_t$ the evolution of the domain $\Omega$ under 
the {normalized} affine curvature flow  (\ie we keep the perimeter, or the area, 
of the domain fixed along the flow) and let $\Delta_{p/q}^t$ be 
the $\Delta_{p/q}$--function associated $\Omega_t$. Then, there exists 
$q_0=q_0(\Om)>2$  such that for some rational $0<p/q<1/q_0$  
we have that $\Delta_{p/q}^t$ is monotone in $t$}. 

\medskip 

Hereafter we verify a local version of this conjecture when
$\Om$ is the unit circle. See Lemma \ref{lem:monotone}.

\subsection{ALS flow evolution}

Let us first describe some results on the ALS flow.

In  \cite[(32) page 96]{SaTa} 
the formula for the evolution of the affine curvature $\nu$ is derived 
\be \label{eq:ALS-flow}
\dfrac{\partial \nu}{\partial t}= 
\frac{4}{3} \nu^2 + \frac{1}{3} \nu_{ss}.
\ee

Let us describe what happens in the case of ellipses, \ie 
$\nu\equiv \nu_0$; in particular, we  want to point out a subtlety of 
this flow, namely, certain blow up in a finite time. Then  
\[
\dfrac{\partial \nu}{\partial t}= \frac{4}{3} \nu^2
\]
becomes an ODE. If we make a substitution 
$\nu = \chi^{-1}$, then 
 \[
\frac{4}{3} \nu^2 = \dfrac{\partial \nu}{\partial t}= -  
\dfrac{1}{\chi^2}\,\dfrac{\partial \chi}{\partial t}= 
-\nu^2\,\,\dfrac{\partial \chi}{\partial t}. 
\] 
Thus, $\chi(t)=\chi_0-\frac{4t}{3}$ and $\nu_0(t)=
\frac{3}{3\chi_0-4t}.$ Notice that in finite time $\nu_0(t)$ 
blows up. It corresponds to the area of the corresponding 
curve converging to zero. See discussions in \cite[Section 7.1]{SaTa}. 
In \cite[Section 8.1]{SaTa}  bounds on the time of blow up
are presented in terms of minimal and maximal 
affine curvature $\nu$. 

Denote the above solution $\nu_0(t)$. Notice that one needs 
to rescale $\nu$, {\it e.g.}, to keep the area inside the domain 
fixed. If no rescaling is done, then the domain collapses to a point.  
Indeed, let  $\nu(s,t)=\nu_0(t)+\e \Delta \nu(s,t)$ for small 
$\e$, then we get 
\begin{eqnarray*}
\dfrac{\partial \nu}{\partial t}& =& \dfrac{\partial \nu_0}{\partial t}+
\e \dfrac{\partial \Delta \nu}{\partial t} \\
&=&
\frac{4}{3} (\nu_0^2 +2 \e \nu_0 \Delta \nu+ \e^2 \Delta \nu^2)
+ \frac{1}{3}   (\nu_0)_{ss}
 +\frac{1}{3}  \e \Delta \nu_{ss}.
\end{eqnarray*}
Simplifying
\[
\dfrac{\partial \Delta \nu}{\partial t}= 
\frac{4}{3} (2\nu_0 \Delta \nu +\e \Delta \nu^2)
+\frac{1}{3} \Delta \nu_{ss}.
\] 
Rewriting as a Fourier expansion 
\[
\Delta \nu (s,t) = \sum_{k\in \Z} \Delta \nu_k(t) e^{ iks}, 
\]
we obtain 
\[
\dfrac{\partial \Delta \nu_k}{\partial t}= 
\left(\frac{8}{3} \nu_0(t)- \frac{k^2}{3}\right) \Delta \nu_k 
+O(\e(\Delta \nu^2)_k),
\] 
where $(\Delta \nu^2)_k$ is the $k$th Fourier coefficient of 
$\Delta \nu^2$. This shows that for each {$|k|> \sqrt{8\, \nu_0(t)}$} 
such that $\Delta \nu_k\ne 0$, for $\e$ small enough 
this Fourier coefficient decays along the ALS flow.  
We will use this fact to prove that locally in time 
the functional $\Delta_q $ decays monotonically. 
See Lemma \ref{lem:monotone}.\\

\subsection{Preservation of rational caustics}
In this section we relate the presence of an integrable rational 
caustic of rotation number $1/q$  to properties of
resonant Fourier coefficients, \ie those with index divisible by $q$.\\

Let us first recall the following facts.
In  \cite[Section 4]{PR} they study small perturbation of ellipses. 
Following notations of \cite[Section 4]{PR} we have that  the
perimeter  
$$
L_\e = L_0+\e L_1+O(\e^2),
$$ 
is given by \cite[Formula (5)]{PR} (here we drop subindex $q$). 
Then by \cite[Proposition 4.1]{PR} 
(see also Proposition \ref{prop:caustic-preserve}) the linear term in 
$\e$ has the form 
\be \label{eq:res-sum}
L_1 (\f)=2\l \sum_{k=0}^{q-1} \mu_1 (\f_q^k),
\ee
where {$\l$ is the parameter associated to a given caustic 
(see also subsection \ref{subsec:ellipticdynamics})} and $\mu_1$ represents the first-order perturbation (in $\e$) of the boundary (see Section \ref{sec:preserva-rational}).\\

Let us now consider the usual polar coordinates and 
let $\Om=\{(\rho,\f):\, \rho=\rho_0\}$ be the circle centered 
at the origin and radius $\rho_0$. We are interested in studying small 
perturbations  given by 
\be \label{eq:family}
\Om_\e=\{(\rho_\e,\f): \ \rho_\e=\rho_0+\e \rho_1(\f)+O(\e^2)\},
\ee
where $\rho_1$ is a $C^r$ smooth function for $r\ge 2$.  
Assume by rescaling that $\rho_1(0)=1$. Expand the perturbation 
in Fourier series: 
\[
\rho_1(\f)=\sum_{j\in \Z} 
\rho_1^{(j)} e^{i j \f}.
\]
We show that for perturbations of the circle and for an appropriate 
choice of $q$, the existence of integrable rational caustics depends on resonant 
Fourier coefficients, \ie those  divisible by $q$. In fact, plug  
the rigid rotation $\f\mapsto \f+\frac{2\pi}{q}$ into 
(\ref{eq:res-sum}). Denote by $\Delta_q(\rho_1,\e)$ the value 
of the function $\Delta_{1/q}$  associated to the domain $\Om_\e$, 
as defined in \eqref{eq:Delta_q}. 
Using \eqref{eq:res-sum} we have that for some $c>0$ independent of $\e$ 
\begin{eqnarray*}  
L_\e &=& L_0 + c \e \sum_{k=0}^{q-1}\rho_1 \left(\f+\frac{2\pi k}{q}\right)  + O(\e^2)\\
&=&
L_0 + c \e \sum_{k=0}^{q-1} \sum_{j\in \Z} 
\rho_1^{(j)} e^{i j \left(\f+\frac{2\pi k}{q}\right)}
  + O(\e^2).
\end{eqnarray*}
Thus, 
\begin{eqnarray} \label{eq:res-sums}   
\Delta_q(\rho_1,\e)&:=& {c^2} \e^2 \, q^2\ \sum_{j\in \Z\setminus \{0\}}   (\rho^{(jq)}_1)^2 
+O(\e^3).  
\end{eqnarray}

\medskip

Consider the domain $\Omega_\e$ defined in \eqref{eq:family}. 
The vanishing of the  function $\Delta_q(\rho_1,\e)$ detects the existence of an integrable rational caustic of rotation number $1/q$. 
According to our computations this function has an asymptotic expansion 
\eqref{eq:res-sums}. Denote by $\Om^t_\e$ the image of $\Om_\e$ 
under the ALS flow \eqref{eq:ALS-flow}. 
 
\blm \label{lem:monotone}
Let $\rho_\e(\f),\ \e \ge 0$ be the family  of domains in \eqref{eq:family}. Assume 
that $q>2$ and that $\rho_1^{(q)}\ne 0$. Then, for $\e$ sufficiently small,  
the family of domains $\Om^t_\e$ for $0\le t\le \e$ satisfies 
\[
\frac{\partial \Delta^t_{q}(\rho_1,\e)}{\partial t}<0.
\]
\elm 

\begin{proof}
 Notice that up to a {affine-length} parametrization, $s$    and polar angle $\f$ are the same. 
Consider derivative with respect to 
the affine length shortening flow (\ref{eq:ALS-flow}) of 
$\Delta^t_{q}(\rho_1,\e)$. According to \eqref{eq:res-sums}, this leads 
to derivative of the resonant Fourier coefficients.  For each  $j>0$ we have that  
\[
\frac{\partial \rho_1^{(jq)}}{\partial t}= \e 
\left[\left(\frac 83 \rho_1^{(0)} -\frac{j^2q^2}{3}\right) \rho_1^{(jq)}+
\e \sum_{p\in \Z\setminus \{0\}}\rho_1^{(jq-p)}\rho_1^{(p)} +O(\e^2)\right]. 
\]
It follows from \eqref{eq:res-sum} and \eqref{eq:res-sums} that 
\[
\Delta^t_{q,\e}=c^2 \e^2 q^2 \sum_{j\in\Z\setminus\{ 0\}} \left(\rho_1^{(jq)}\right)^2+O(\e^3).
\]
Consider 
\[
\dfrac{\partial}{\partial t} \sum_{j\in\Z\setminus \{0\}} \left(\rho_1^{(jq)}\right)^2=
\]
\[
= \e \sum_{j\in\Z\setminus 0}
\left[\left(\frac 83 \rho_1^{(0)} -\frac{j^2q^2}{3}\right) \left(\rho_1^{(jq)}\right)^2+
\e \sum_{p\in \Z\setminus \{0\}}\rho_1^{(jq-p)}\rho_1^{(p)}\rho_1^{(jq)} +O(\e^2)\right]
\]

Since $\rho^{(q)}_1\ne 0$, for $\e$ small enough the last expression is negative.  
\end{proof}  

  \medskip
  
\noindent {\bf Concluding remarks:} 
This Lemma is certainly only an example of, what we believe, is
a much more general phenomenon. More specifically, we conjecture:  \medskip  
\begin{center}
{\it Monotonicity of 
the functional $\Delta_q$ along the ALS flow \eqref{eq:ALS-flow}
\footnote{As we pointed out if $\Delta_{q_k}\ne 0$ for some 
sequence $q_k\to \infty$ as $k\to \infty$, then the billiard
in non-integrable.}.}
\end{center} 
 \medskip 
The next step would be a local analysis of the ALS flow in 
a neighborhood of ellipses. It would be more challenging 
to extend this local analysis to the space of 
strictly convex domains and this will be an important step  to prove the global Birkhoff Conjecture.

\medskip



\begin{thebibliography}{10}
\expandafter\ifx\csname natexlab\endcsname\relax\def\natexlab#1{#1}\fi
\expandafter\ifx\csname bibnamefont\endcsname\relax
  \def\bibnamefont#1{#1}\fi
\expandafter\ifx\csname bibfnamefont\endcsname\relax
  \def\bibfnamefont#1{#1}\fi	
\expandafter\ifx\csname citenamefont\endcsname\relax
  \def\citenamefont#1{#1}\fi
\expandafter\ifx\csname url\endcsname\relax
  \def\url#1{\texttt{#1}}\fi
\expandafter\ifx\csname urlprefix\endcsname\relax\def\urlprefix{URL }\fi
\providecommand{\bibinfo}[2]{#2}
\providecommand{\eprint}[2][]{\url{#2}}


\bibitem{Akhiezer}
Naum I. Akhiezer.
\newblock Elements of the theory of elliptic functions. 
\newblock {\em Translations of Mathematical Monographs}, 79. American Mathematical Society, Providence, RI, viii+237, 1990.

{\bibitem{AM}  Karl G. Andersson, Richard Melrose.
\newblock The Propagation of Singularities along Gliding Rays.
\newblock {\em Invent. Math.}, 4:  23--95, 1977.
}


\bibitem{ADK}
Artur Avila, Jacopo De Simoi and Vadim Kaloshin.
\newblock An integrable deformation of an ellipse of small eccentricity is an ellipse.
\newblock {Ann. of Math.} 184: 527--558, 2016.

{\bibitem{Bangert}
Victor Bangert. 
\newblock Mather sets for twist maps and geodesics on tori. 
\newblock {\em Dynamics reported}, Vol. 1, volume 1 of {\em Dynam. Report. Ser. Dynam. Systems Appl.}, pp. 1--56. Wiley, Chichester, 1988
}


\bibitem{Bialy}
Misha Bialy
\newblock Convex billiards and a theorem by E. Hopf.
\newblock {\em Math. Z.} 124 (1): 147--154, 1993.


\bibitem{BialyMironov}
Misha Bialy and Andrey Mironov. 
\newblock Angular Billiard and Algebraic Birkhoff conjecture. 
\newblock {Preprint 2016.} 



\bibitem{Birkhoff}
George D. Birkhoff.
\newblock On the periodic motions of dynamical systems.
\newblock {\em Acta Math.} 50 (1): 359--379, 1927.



\bibitem{CF}
Shau-Jin Chang and Richard Friedberg.
\newblock Elliptical billiards and Poncelet's theorem.
\newblock {\em J. Math. Phys.} 29: 1537--1550, 1988.

\bibitem{Cr}
Christopher B. Croke. 
\newblock Rigidity for surfaces of nonpositive curvature. 
\newblock {\em Comment. Math. Helv.} 65 (1):150--169, 1990.



\bibitem{DCR}
Josu\'e Damasceno, Mario J. Dias Carneiro and Rafael Ram\'irez-Ros.
\newblock The billiard inside an ellipse deformed by the curvature flow.
\newblock {\em Proc. Amer. Math. Soc.} 145: 705--719, 2017.

\bibitem{DRR}
Amadeu Delshams and Rafael Ram\'irez-Ros.
\newblock Poincar\'e-{M}elnikov-{A}rnold method for
              analytic planar maps.
\newblock {\em Nonlinearity} 9 (1): 1--26, 1996.


\bibitem{DKW} 
Jacopo De Simoi, Vadim Kaloshin and Qiaoling Wei,  (Appendix B coauthored with H. Hezari)
\newblock Deformational spectral rigidity among $Z_2$-symmetric domains close to the circle.
\newblock {Ann. of Math.} 186: 277--314, 2017. 



\bibitem{GT}
David Gilbarg and Neil S. Trudinger, 
\newblock Elliptic Partial Differential Equations of Second Order, 
\newblock  {\em Classics in Mathematics}, Springer-Verlag, New York, 2001, reprint of the 1998 edition.


\bibitem{GordonWebbWolpert}
Carolyn Gordon, David L. Webb and Scott Wolpert.
\newblock One Cannot Hear the Shape of a Drum.
\newblock {\em Bulletin of the American Mathematical Society} 
27 (1): 134--138, 1992.


\bibitem{Grayson}
Matthew A. Grayson.
\newblock Shortening embedded curves.
\newblock {Ann. of Math.} 29 (1): 71--111, 1989,

\bibitem{GM}
Victor Guillemin and Richard Melrose.
\newblock A cohomological invariant of discrete dynamical systems.
\newblock {\em{E. B. Christoffel (Aachen/Monschau, 1979)}, 
672--679, {\em Birkh\"auser, Basel-Boston, Mass.}, 1981.}

\bibitem{GK} 
Victor Guillemin and David Kazhdan. 
\newblock Some inverse spectral results for negatively curved 2-manifolds. 
\newblock {\em Topology} 19 (3): 301--312, 1980.

\bibitem{Gutkin} 
Eugene Gutkin.
\newblock Billiard dynamics: a survey with the emphasis on open problems.
\newblock {\em Regul. Chaotic Dyn.} 8 (1): 1--13, 2003.


\bibitem{Halpern}
Benjamin Halpern.
\newblock Strange billiard tables.
\newblock {\em Trans. Amer. Math. Soc.} 232: 297--305, 1977.

\bibitem{HZ}
Hamid Hezari and Steve Zelditch.
\newblock Inverse spectral problem for analytic $(\Z/2\Z)^n$-symmetric 
domains in $\R^n$.
\newblock {\em Geom. Funct. Anal.} 20 (1): 160 --191,  2010.




\bibitem{HKS} 
Guan Huang, Vadim Kaloshin and Alfonso Sorrentino
\newblock On Strong Local Birkhoff conjecture near ellipses of small 
eccentricity. 
\newblock To appear on {\em Geom. and Funct. Analysis}, 2018.


\bibitem{HKS2} 
Guan Huang, Vadim Kaloshin and Alfonso Sorrentino
\newblock On Marked Length Spetrums of Generic Strictly Convex Billiard Tables. 
\newblock   {\em Duke Math. Journal}, 167 (1): 175 -- 209, 2018.

\bibitem{Innami}
Nobohuiro Innami.
\newblock Geometry of geodesics for convex billiards and circular billiards.
\newblock {\em Nihonkai Math. J.} 13: 73--120, 2002.



\bibitem{Kac} Mark Kac.
\newblock Can one hear the shape of a drum?
\newblock {\em American Mathematical Monthly} 73 (4, part 2): 1--23, 1966.



\bibitem{Lazutkin}
Vladimir F. Lazutkin.
\newblock Existence of caustics for the billiard problem in a convex domain. (Russian)
\newblock {\em Izv. Akad. Nauk SSSR Ser. Mat.} 37: 186--216, 1973.


\bibitem{Lazut}
Vladimir F. Lazutkin
\newblock K{AM} theory and semiclassical approximations to
              eigenfunctions
\newblock {\em Ergebnisse der Mathematik und ihrer Grenzgebiete (3) }
Vol.24, x+387 pp, Springer-Verlag, 1993.


\bibitem{MM}
Shahla Marvizi and Richard Melrose.
\newblock Spectral invariants of convex planar regions.
\newblock {\em J. Differential Geom.}, 17 (3): 475--503, 1982.


\bibitem{MM2}
Shahla Marvizi and Richard Melrose.
\newblock Some spectrally isolated convex planar regions.
\newblock {\em Proc. Natl. Acad. Sci. USA}, 79: 7066--7067, 1982.



\bibitem{Mather82}
John N. Mather.
\newblock Glancing billiards.
\newblock {Ergodic Theory Dynam. Systems} 2 (3--4): 397--403, 1982.


\bibitem{Mather90}
John N. Mather.
\newblock Differentiability of the minimal average action as a function
              of the rotation number.
\newblock {\em Bol. Soc. Brasil. Mat. (N.S.)} 21: 59--70, 1990.


\bibitem{MatherForni}
John N. Mather and Giovanni Forni.
\newblock Action minimizing orbits in {H}amiltonian systems.
\newblock{\em Transition to chaos in classical and quantum mechanics
              ({M}ontecatini {T}erme, 1991)}, {Lecture Notes in Math.}, {Vol. 1589}: 92--186, 1994.


\bibitem{Milnor}
John Milnor.
\newblock Eigenvalues of the Laplace operator on certain manifolds.
\newblock {Proc. Nat. Acad. Sci. U.S.A.} 15: 275--280, 1964.

\bibitem{Mo} 
J\"urgen Moser.
\newblock Selected Chapters of the Calculus of Variations.
\newblock {\em Lectures in Mathematics}, ETH, Zurich, 2003. 

\bibitem{Ot} 
Jean-Pierre Otal. 
\newblock Le spectre marqu\`e des longueurs des surfaces \`a courbure n\'egative.
\newblock {\em Ann. of Math. (2)} 131 (1): 151--162, 1990.

\bibitem{OPS1} 
Brad Osgood, Ralph Phillips, and Peter Sarnak. 
\newblock Compact isospectral sets of surfaces.
\newblock {\em J. Funct. Anal.} 80 (1): 212--234, 1988.



\bibitem{OPS2} 
Brad Osgood, Ralph Phillips, and Peter Sarnak. 
\newblock Extremals of determinants of Laplacians.
\newblock {\em J. Funct. Anal.} 80 (1): 148--211, 1988.

\bibitem{OPS3} 
Brad Osgood, Ralph Phillips, and P. Sarnak. 
\newblock Moduli space, heights and isospectral sets of plane domains. 
\newblock  {\em Ann. of Math. (2)}, 129 (2): 293--362, 1989.


\bibitem{PR}
S\^onia Pinto de Carvalho and Rafael Ram\'irez-Ros. 
\newblock Non-persistence of resonant caustics in perturbed elliptic billiards. 
\newblock {\em Ergodic Theory Dynam. Systems} 33 (6): 1876--1890, 2013.


\bibitem{Popov}
Georgi Popov.
\newblock Invariants of the Length Spectrum and Spectral Invariants of 
Planar Convex Domains.
\newblock {\em Commun. Math. Phys.}, 161: 335--364, 1994.

\bibitem{PT}
Georgi Popov and Peter Topalov
\newblock From KAM Tori to Isospectral Invariants and Spectral Rigidity of 
Billiard Tables.
\newblock Preprint, 2016.


\bibitem{Po}
Hillel Poritsky, 
\newblock The billiard ball problem on a table with a convex boundary --- 
an illustrative dynamical problem, 
\newblock {\em Ann. of Math.} 51: 446--470,1950.


\bibitem{RR}
Rafael Ram\'irez-Ros.
\newblock Break-up of resonant invariant curves in billiards and dual
billiards associated to perturbed circular tables.
\newblock {\em Phys. D} 214: 78--87, 2006.

\bibitem{SaTa}
Guillermo Sapiro and Allen Tannenbaum. 
\newblock On affine plane curve evolution. 
\newblock {\em Journal of Functional Analysis}, 119: 79--120, 1994.

\bibitem{Sarnak}
Peter Sarnak.
\newblock Determinants of {L}aplacians; heights and finiteness.
\newblock {\em Analysis, et cetera}, pp: 601--622. Academic Press, 
Boston,  MA, 1990.



\bibitem{Siburg}
Karl F. Siburg.
\newblock The principle of least action in geometry and dynamics.
\newblock {\em Lecture Notes in Mathematics} Vol.1844, xiii+ 128 pp,
Springer-Verlag, 2004.


\bibitem{SorDCDS}
Alfonso Sorrentino.
\newblock Computing Mather's beta-function for Birkhoff billiards.
\newblock {\em Discrete and Continuous Dyn. Systems Series A} 35 (10): 5055-- 5082, 2015. 

\bibitem{SorLecNotes}
Alfonso Sorrentino.
\newblock  Action-Minimizing Methods in Hamiltonian Dynamics. An Introduction to Aubry-Mather Theory. 
\newblock {\em  Mathematical Notes Series} Vol. 50, Princeton University Press, 2015.


\bibitem{Tabach1}
Serge Tabachnikov.
\newblock Billiards.
\newblock {\em Panor. Synth.} No. 1, vi+ 142 pp, 1995.



\bibitem{Tabach}
Serge Tabachnikov.
\newblock Geometry and billiards.
\newblock {\em Student Mathematical Library} Vol.30, xii+ 176 pp, American Mathematical Society, 2005.

\bibitem{Taba}
Mikhail B. Tabanov.
\newblock New ellipsoidal confocal coordinates and geodesics on an ellipsoid.
\newblock {\em J. Math. Sci.} 82 (6): 3851--3858, 1996.

\bibitem{Tre}
Dimitry Treschev, 
\newblock Billiard map and rigid rotation, 
\newblock {\em Phys. D} 255: 31--34, 2013.

\bibitem{Woi}
Maciej P. Wojtkowski.
\newblock Two applications of {J}acobi fields to the billiard ball
              problem.
\newblock {\em J. Differential Geom} 40 (1): 155--164, 1994.


\bibitem{Zelditch}
Steve  Zelditch.
\newblock Spectral determination of analytic bi-axisymmetric plane domains.
\newblock {\em Geom. Funct. Anal.} 10 (3): 628--677, 2000.



\end{thebibliography}
\end{document}